\newcommand{\dd}{\displaystyle}
\def\OO{{\mathcal{O}}}
\def\AA{{\mathcal{A}}}
\def\RR{{\mathbb R}}
\def\NN{{\mathbb N}}
\def\CC{{\mathbb C}}
\def\TCC{{\underline{\mathbb C}}}
\def\CCC{{\mathcal{C}}}
\def\CCCC{{\mathcal{C}}}
\def\ZZ{{\mathbb Z}}
\def\MM{{\mathcal{M}}}
\def\PPP{{\mathfrak p}}
\def\DB{{\bar{\partial}}}
\def\DDJ{{\bar{\partial}_J}}
\def\DDJTL{{\bar{\partial}_{J,f_l}^t}}
\def\DDJT{{\bar{\partial}_{J,f}^t}}
\def\DDJTG{{\bar{\partial}_{J,g_l}^t}}
\def\d{{\mathrm d}}
\def\NF{{\mathcal{N}}}
\newcommand{\riso}[1]{\RR Aut(#1)}
\newcommand{\raut}[1]{\RR GL(#1)}
\newcommand{\rsaut}[1]{\RR SL (#1)}
\newcommand{\pinpm}{Pin^{\pm}}
\newcommand{\hczdeux}[2][]{H^1_{#1}(#2,\ZZ/2\ZZ)}
\newcommand{\hhzdeux}[1]{H_1(#1,\ZZ/2\ZZ)}
\newcommand{\hhz}[1]{H_1(#1,\ZZ)}
\newcommand{\hcz}[1]{H^1(#1,\ZZ)}
\newcommand{\rspin}{\RR Spin}
\newcommand{\surg}{\Sigma_g}
\newcommand{\sur}{\Sigma}
\newcommand{\modsig}{\left[(\Sigma_g,c_{\Sigma}),(\CC^*,conj)\right]}
\newcommand{\DBS}[1]{\dd\frac{\DDJ(#1)}{#1}}
\newcommand{\rop}[1][N]{\RR \CCCC (#1)}
\newcommand{\ropj}[1][N]{\RR\CCCC_J(#1)}
\newcommand{\Det}[1][N]{\ddet(#1)}
\newcommand{\cp}[1][1]{\CC P^{#1}}
\newcommand{\rp}[1][1]{\RR P^{#1}}
\renewcommand\Re{\operatorname{\mathfrak{Re}}}
\DeclareMathOperator{\ddet}{Det}
\DeclareMathOperator{\coker}{coker}
\DeclareMathOperator{\ind}{ind}
\DeclareMathOperator{\im}{im}
\DeclareMathOperator{\card}{card}
\DeclareMathOperator{\id}{id}
\DeclareMathOperator{\rang}{rg}
\DeclareMathOperator{\dv}{div}
\DeclareMathOperator{\pd}{PD}
\DeclareMathOperator{\Fix}{Fix}
\DeclareMathOperator{\arf}{Arf}
\DeclareMathOperator{\e}{e}
\DeclareMathOperator{\Hom}{Hom}
\DeclareMathOperator{\Pic}{Pic}
\DeclareMathOperator{\rest}{rest}
\DeclareMathOperator{\res}{rest_2}
\newtheorem{Theoreme}{Théorème}[section]
\newtheorem{Theo}{Théorème}
\newtheorem{Corollaire}{Corollaire}[section]
\newtheorem{Proposition}{Proposition}[section]
\newtheorem{Lemme}{Lemme}[section]
\newtheorem{Definition}{Définition}[section]
\newcounter{Notcount}
\newenvironment{Notation}{\refstepcounter{Notcount}\noindent\ignorespaces {\bf Notation \arabic{Notcount}}. --- }{\bigskip}
\newcounter{Remcount}[section]
\renewcommand{\theRemcount}{\thesection.\arabic{Remcount}}
\newenvironment{Rem}{\refstepcounter{Remcount}\noindent\ignorespaces {\bf Remarque \theRemcount}. --- }{\bigskip}
\newcounter{Explcount}[section]
\renewcommand{\theExplcount}{\thesection.\arabic{Explcount}}
\newenvironment{Exple}{\refstepcounter{Explcount}\noindent\ignorespaces {\bf Exemple \theExplcount}. --- }{\bigskip}
\title{Automorphismes réels d'un fibré et opérateurs de Cauchy-Riemann \\ {\large Real automorphisms of a bundle and Cauchy-Riemann operators}}
\author{Rémi CR\'ETOIS}
\begin{document}

\maketitle

\centerline{\textbf{Résumé}}

Soit $(N,c_N)$ un fibré vectoriel complexe muni d'une structure réelle au-dessus d'une courbe réelle $(\surg,c_\sur)$ de genre $g\in\NN$. Nous étudions le signe de l'action des automorphismes de $(N,c_N)$ relevant l'identité de $\surg$ sur les orientations du fibré déterminant au-dessus de l'espace des opérateurs de Cauchy-Riemann réels de $(N,c_N)$. Ce signe s'obtient comme le produit de deux termes. Le premier calcule la signature des permutations induite par les automorphismes sur les structures $Pin^\pm$ de la partie réelle de $(N,c_N)$. Le second provient de l'action des automorphismes du fibré sur les classes de bordisme de structures $Spin$ réelles de $(\surg,c_\sur)$.

\centerline{\textbf{Abstract}}

Let $(N,c_N)$ be a complex vector bundle equipped with a real structure over a real curve $(\surg,c_\sur)$ of genus $g\in\NN$. We compute the sign of the action of the automorphisms of $(N,c_N)$ lifting the identity of $\surg$ on the orientations of the determinant line bundle over the space of real Cauchy-Riemann operators on $(N,c_N)$. This sign can be obtained as the product of two terms. The first one computes the signature of the permutations induced by the automorphisms acting on the $Pin^\pm$ structures of the real part of $(N,c_N)$. The second one comes from the action of the automorphisms of $(N,c_N)$ on the bordism classes of real $Spin$ structures on $(\surg,c_\sur)$.

\textsc{Classification AMS 2010}: 14H60, 53D45.

\textsc{Mots Clés}: fibrés vectoriels, courbes réelles, opérateurs de Cauchy-Riemann, espaces de modules, invariants de Gromov-Witten.

\tableofcontents

\section*{Introduction}

Soit $\surg$ une surface compacte, connexe, sans bord, orientée et de genre $g\in\NN$. Fixons une structure réelle sur $\surg$, c'est-à-dire une involution $c_\sur$ de $\surg$ qui renverse les orientations. Nous dirons alors que le couple $(\surg,c_\sur)$ est une courbe réelle. Considérons un fibré vectoriel complexe $N$ sur $\surg$. Fixons aussi une structure réelle sur $N$, c'est-à-dire une involution $c_N$ de $N$ induite par un isomorphisme entre $N$ et $\overline{c_\sur^*N}$. L'ensemble $\RR \surg$ des points fixes de $c_\sur$ est une sous-variété de dimension $1$ de $\surg$, éventuellement vide, au-dessus de laquelle l'ensemble $\RR N$ des points fixes de $c_N$ forme un fibré vectoriel réel de rang $\rang (N)$. Prenons un opérateur de Cauchy-Riemann $\DB$ sur $N$ qui est équivariant pour les actions de $c_N$ sur les sections de $N$ et sur les $(0,1)$-formes à valeurs dans $N$ (voir Définition \ref{opcr}). Un tel opérateur induit une unique structure holomorphe sur $N$ pour laquelle $c_N$ est anti-holomorphe (voir \cite{koba}). Le noyau de cet opérateur, noté $H^0_\DB(\surg,N)$, est alors formé des sections holomorphes de $N$, et son conoyau, noté $H^1_\DB(\surg,N)$ est canoniquement isomorphe au premier groupe de cohomologie du faisceau des sections holomorphes de $N$. L'involution $c_N$ induit deux applications $\CC$-antilinéaires sur ces deux espaces; nous noterons avec un indice $+1$ les espaces propres associés à la valeur propre $1$ de ces applications. Notons $\ddet(\DB)$ la droite déterminant du complexe trivial $\{(H^*_\DB(\surg,N)_{+1},\DB)\}$. L'ensemble $\rop$ de tous ces opérateurs de Cauchy-Riemann sur $(N,c_N)$ est un espace contractile, et l'union de toutes les droites déterminant forme un fibré en droites réelles $\Det$ au-dessus de $\rop$ (voir \cite{MDS}). Remarquons que lorsque $N$ est de rang $1$ et de degré congru à $g-1$ modulo $2$ le fibré $\Det$ induit un fibré en droites réelles qui n'est a priori pas orientable sur la composante du groupe de Picard réel de $(\surg,c_\sur)$ contenant les fibrés en droites holomorphes réels de degré $\deg(N)$ et dont la partie réelle a même première classe de Stiefel-Whitney que $\RR N$ (voir \S \ref{picard}). D'autre part, le groupe $\riso{N}$ des automorphismes de $(N,c_N)$ agit naturellement sur $\Det$. Le but de cet article est d'étudier cette action.

Celle-ci apparaît lorsque l'on s'intéresse à l'orientabilité de l'espace de modules $\RR \MM_g^d(X,J)$ des courbes réelles $J$-holomorphes dans une variété symplectique $(X,\omega)$ munie d'une involution $c_X$ anti-symplectique, qui sont de genre $g$ et de degré $d\in H_2(X,\ZZ)$, et où $J$ est une structure presque complexe générique sur $X$ compatible avec $\omega$ rendant $c_X$ anti-holomorphe. En effet, étant donnée une telle courbe immergée $u:(\surg,c_\sur)\rightarrow (X,c_X)$, on a un morphisme de monodromie $\mu:\pi_1(\RR\MM_g^d(X,J),u) \rightarrow \pi_0(\riso{N_u})$, où $N_u$ est le fibré normal à $u$. Alors pout tout $\gamma\in\pi_1(\RR\MM_g^d(X,J),u)$, la première classe de Stiefel-Whitney de $\RR \MM_g^d(X,J)$ calculée le long de $\gamma$ est nulle si et seulement si $\mu(\gamma)$ préserve les orientations de $\Det[N_u]$ (voir \cite{article2}).

Dans le présent article, nous étudions l'action du sous-groupe $\raut{N}$ de $\riso{N}$ formé des automorphismes de $(N,c_N)$ qui valent l'identité sur $\surg$. D'une part, un élément de $\raut{N}$ induit une permutation de l'ensemble à $2b_0(\RR\surg)$ éléments formé des structures $Pin^\pm$ sur $\RR N$. Nous notons $\varepsilon_{\PPP^\pm} : \raut{N} \rightarrow \ZZ/2\ZZ$ le morphisme calculant la signature de ces permutations. D'autre part, nous montrons qu'un élément de $\raut{N}$ agit à travers son déterminant sur les classes de bordisme de structures $Spin$ réelles sur $(\surg,c_\sur)$ (voir \S \ref{actmodspin}). \'Etant donnée une structure complexe sur $(\surg,c_\sur)$, une structure $Spin$ réelle est la donnée d'un fibré en droites holomorphe $L$ sur $(\surg,c_\sur)$, admettant une structure réelle et dont le carré est isomorphe au fibré canonique de $\surg$. Une telle structure admet une première classe de Stiefel-Whitney donnée par $w_1(\RR N)$. Nous dirons alors que deux structures $Spin$ réelles sont bordantes si elles ont même première classe de Stiefel-Whitney et si elles ont même invariant d'Arf. Nous notons alors $\AA^{w}:\raut{N}\rightarrow \{0,1\}$ le morphisme valant $0$ seulement pour les automorphismes préservant les classes de bordisme de structures $Spin$ réelles sur $(\surg,c_\sur)$ de première classe de Stiefel-Whitney $w$. La première partie du résultat principal de cet article est le Théorème suivant.

\begin{Theo}\label{degg}
 Soit $(N,c_N)$ un fibré vectoriel complexe muni d'une structure réelle sur $(\surg,c_\sur)$, de partie réelle non vide. Si $\deg(N) = g+1\mod2$, alors le signe de l'action d'un automorphisme $f\in\raut{N}$ sur les orientations de $\Det$ est donné par le produit $\varepsilon_{\PPP^\pm}(f)(-1)^{\AA^{w_1(\RR N)}(f)}$.
\end{Theo}

Lorsque $\deg(N) = g \mod 2$ et $\rang(N) = 1$, l'automorphisme $-\id_N$ préserve les structures $Spin$ sur $\surg$ mais renverse les orientations de $\Det$ d'après le Théorème de Riemann-Roch, ce qui montre que le Théorème \ref{degg} est faux dans ce cas. Nous définissons alors un nouveau morphisme $s_N : \raut{N}\rightarrow \{0,1\}$ en prenant en compte l'action des automorphismes sur les orientations des composantes orientables de $\RR N$ (voir \S \ref{subenon}). Nous obtenons alors,

\begin{Theo}\label{toutdeg}
  Soit $(N,c_N)$ un fibré vectoriel complexe muni d'une structure réelle sur $(\surg,c_\sur)$, de partie réelle non vide. Le signe de l'action d'un automorphisme $f\in\raut{N}$ sur les orientations de $\Det$ est donné par le produit $\varepsilon_{\PPP^\pm}(f)(-1)^{s_N(f)}$.
\end{Theo}

Ainsi, nous obtenons une interprétation en termes topologiques de l'action de $\raut{N}$ sur les orientations de $\Det$.

 En guise d'application, nous calculons la première classe de Stiefel-Whitney du fibré en droites réelles induit par le fibré déterminant sur le groupe de Picard réel d'une surface de Riemann réelle $(\surg,c_\sur)$ (voir les Théorèmes \ref{picp} et \ref{pic}).

Cet article est organisé de la façon suivante. Nous commençons par exposer quelques préliminaires concernant les éléments de $\raut{N}$. Nous décrivons notamment la classe d'homotopie d'un tel automorphisme (voir \S \ref{stru}). Les parties \ref{partie1} et \ref{automid} sont consacrées à la démonstration des Théorèmes \ref{degg} et \ref{toutdeg}. Nous traitons tout d'abord le cas d'un automorphisme de déterminant $1$ (voir Théorème \ref{action1}). Ceux-ci agissent trivialement sur les structures $Spin$ réelles et leur action sur les orientations de $\Det$ est entièrement décrite par le morphisme $\varepsilon_{\PPP^\pm}$. Nous passons ensuite au cas général. Nous montrons qu'il nous suffit de traiter le cas où $N$ est de rang $1$. Nous décrivons alors l'action d'un élément de $\raut{N}$ sur les structures $Spin$ réelles de $\surg$ puis nous relions celle-ci à l'action de $\raut{N}$ sur les orientations de $\Det$. Pour cela, nous traitons le cas du fibré trivial $(\TCC,conj)$ en décrivant le conoyau d'un opérateur de Cauchy-Riemann réel sur ce fibré puis nous nous ramenons à ce cas en utilisant des transformations élémentaires (voir \S \ref{trans}).

\section{Le groupe des automorphismes d'un fibré réel}\label{stru}

Soit $(\surg,c_\sur)$ une courbe réelle. Les points fixes de $c_\sur$ forment une
sous-variété de dimension $1$
de $\surg$ notée $\RR\surg$, qui n'est pas nécessairement connexe ou non vide (voir \cite{nat}). La lettre $k$ désignera toujours le
nombre de composantes connexes $(\RR\surg)_1,\ldots,(\RR \surg)_k$ de
$\RR \surg$, et nous supposerons à partir de maintenant que $k$ est non nul.

Considérons un fibré vectoriel complexe $(N,c_N)$ muni d'une structure réelle sur $(\surg,c_\sur)$. Nous noterons généralement $(\RR N)_i$ la restriction du fibré $\RR N$ au-dessus de $(\RR \surg)_i$.

Nous appellerons morphisme entre deux fibrés vectoriels complexes
munis de structures réelles $\pi_N: (N,c_N)\rightarrow
(\surg,c_{\sur})$ et $\pi_{N'}: (N',c_{N'})\rightarrow
(\surg',c_{\sur'})$ une paire $(\Phi,\phi)$ composée
\begin{itemize}
\item d'un difféomorphisme $\phi : (\surg,c_{\sur})\rightarrow
  (\surg',c_{\sur'})$ $\ZZ/2\ZZ$-équivariant et préservant les
  orientations
\item d'une application $\Phi : (N,c_N) \rightarrow (N',c_{N'})$
  $\ZZ/2\ZZ$-équivariante, $\CC$-linéaire dans les fibres et faisant
  commuter le diagramme
  \[
\xymatrix{
   (N,c_N)  \ar[d]_{\pi_N} \ar[r]^\Phi & (N',c_{N'}) \ar[d]_{\pi_{N'}} \\
    (\surg,c_{\sur}) \ar[r]_\phi& (\surg',c_{\sur'}). 
}
  \]
\end{itemize}
Notons $\raut{N}$ l'ensemble des automorphismes de $(N,c_N)$ au-dessus de l'identité. Nous ne considérerons dans le présent article que de tels automorphismes, le cas général étant traité dans \cite{}.

Nous rappelons tout d'abord un résultat de classification de ces
fibrés à isomorphisme près qui nous sera utile par la suite (voir par exemple les Propositions 4.1 et 4.2 de \cite{bisw}).

\begin{Lemme}\label{classification}
  Deux fibrés vectoriels complexes munis de structures réelles
  $(N,c_N)$ et $(N',c_{N'})$ sont isomorphes au-dessus de l'identité sur
  $(\surg,c_\sur)$ si et seulement si ils ont même rang, même degré et si leurs
  parties réelles ont même première classe de Stiefel-Whitney.\qed
\end{Lemme}

 Notons $\RR J(\surg)$ l'ensemble des structures complexes de $\surg$ compatibles avec l'orientation fixée et pour lesquelles $c_\sur$ est anti-holomorphe. Cet espace est non vide et contractile (voir
\cite{nat}, \cite{viterbo} et \cite{wel1}).

\begin{Definition}\label{opcr}
 Fixons une structure complexe $J\in \RR J(\surg)$, un entier $k \geq 1$ et un réel $p>1$ tels que $kp>2$. Un opérateur de Cauchy-Riemann réel sur $(N,c_N)$ est un opérateur $\CC$-linéaire
\[
\DB : L^{k,p}(\surg,N) \rightarrow L^{k-1,p}(\surg,\Lambda^{0,1}\surg \otimes N)
\]
équivariant sous l'action de $c_N$ et vérifiant la règle de Leibniz:
\[
\forall f\in\CCC^\infty(\surg,\CC),\ \forall v\in L^{k,p}(\surg,N),\ \DB(fv) = \DB_J(f)\otimes v + f\DB(v),
\]
où $\DB_J = \dd\frac{1}{2}(\d + i\circ\d\circ J)$.
\end{Definition}

Pour tout $J\in\RR J(\surg)$, notons $\ropj$ l'espace des opérateurs de Cauchy-Riemann
sur $(N,c_N)$ au-dessus de $(\surg,c_\sur)$ munie de la structure $J$. Cet ensemble forme un espace affine de dimension infinie
(voir ~\cite{MDS}). Nous noterons d'autre part $\rop$ l'ensemble des paires $(\DB,J)$ où $\DB$ est un élément de $\ropj$.

La structure réelle $c_N$ induit une involution $\CC$-antilinéaire sur les espaces de Banach $L^{k,p}(\surg,N)$ et $L^{k-1,p}(\surg,\Lambda^{0,1}\surg\otimes N)$. Nous notons avec un indice $+1$ (resp. $-1$) les sous-espaces propres associés à la valeur propre $+1$ (resp. $-1$). Un opérateur de Cauchy-Riemann réel $\DB$ induit par restriction un opérateur Fredholm de $L^{k,p}(\surg,N)_{+1}$ dans $L^{k-1,p}(\surg,\Lambda^{0,1}\surg\otimes N)_{+1}$. Nous notons respectivement $H_{\DB}^0(\surg,N)_{+1}$ et $H_{\DB}^{1}(\surg,N)_{+1}$ le noyau et le conoyau de l'opérateur obtenu. Par régularité elliptique, ces deux espaces vectoriels ne dépendent ni de $k$ ni de $p$. Il existe sur l'ensemble $\rop$ un fibré en droites réelles $\Det$ dont la fibre au-dessus de l'opérateur $\DB$ est son déterminant $\ddet(\DB) = \Lambda^{\max}_{\RR} H_{\DB}^0(\surg,N)_{+1} \otimes \left(\Lambda^{\max}_{\RR} H_{\DB}^1(\surg,N)_{+1}\right)^*$ (voir \ref{MDS}). Comme $\rop$ est un espace contractile, le fibré $\Det$ est orientable.

Nous étudions dans les \S\S \ref{partie1} et \ref{automid} de cet article l'action des
automorphismes réels de $(N,c_N)$ relevant l'identité sur le fibré $\Det$ au-dessus de $\rop$. Cette action est définie de la façon suivante. Tout d'abord, les
automorphismes réels de $(N,c_N)$ agissent sur les espaces de sections
de $N$ et de $(0,1)$-formes à valeurs dans $N$ d'une part et d'autre
part sur les structures complexes de $(\surg,c_{\sur})$
respectivement par
\[
\begin{array}{c}
  (\Phi,\phi)_*s = \Phi(s)\circ\phi^{-1}\\
  (\Phi,\phi)^*\alpha = \Phi^{-1}\circ\alpha\circ\d\phi\text{ et}\\
  (\Phi,\phi)^*J = \d\phi^{-1}\circ J \circ \d\phi
\end{array}
\]
où $s$ est une section de $N$, $\alpha$ est une $(0,1)$-forme à valeurs dans
$N$ et $J$ une structure complexe sur la surface (voir
\cite{wel1}). De plus, cette action est $\ZZ/2\ZZ$-équivariante.  Ceci
induit une action sur l'espace $\rop$ 
\[
(\Phi,\phi)^*(\DB,J) =
\left((\Phi,\phi)^*\left(\DB\left((\Phi,\phi).\right)\right),(\Phi,\phi)^*J\right)
\]
et donc sur le fibré $\Det$.

Rien ne garantit qu'en agissant sur ce fibré les automorphismes réels
de $(N,c_N)$ préservent ses orientations. Nous allons donc étudier
plus précisément l'action des éléments de $\raut{N}$ sur les
orientations du fibré $\Det$.

Remarquons tout d'abord que cette action ne dépend que de la classe d'homotopie de l'automorphisme réel de
$(N,c_N)$. Ainsi, nous commençons par décrire la structure des automorphismes réels d'un
fibré vectoriel complexe muni d'une structure réelle $(N,c_N)$ sur
$(\surg,c_\sur)$ pris à homotopie près.

Introduisons pour cela le sous-groupe
\[
\rsaut{N} := \left\{\Phi \in \raut{N}\ |\ \forall x\in\Sigma_g,\
  \det(\Phi_x) = 1\right\}
\]
des éléments de $\raut{N}$ de déterminant $1$.

\begin{Lemme}\label{decomposition}
  Les suites suivantes sont exactes :
  \begin{equation}\label{suite2}
    0 \rightarrow \rsaut{N} \longrightarrow \raut{N} \xrightarrow{\det} \RR\CCC^{\infty}(\surg,\CC^*) \rightarrow 0
  \end{equation}
  \begin{equation}\label{suite3}
    0 \rightarrow \ker(\rest) \longrightarrow \rsaut{N} \xrightarrow{\rest} \pi_0(SL(\RR N)) \rightarrow 0
  \end{equation}
  où :
  \[
  \begin{array}{c c c c}
    \rest : &\rsaut{N} & \rightarrow & \pi_0(SL(\RR N)) \\
    &\Phi      & \rightarrow & [\Phi_{|\RR N}].
  \end{array}
  \]
  De plus, $\ker(\rest)$ est connexe.
\end{Lemme}

 Notons avant de passer à la démonstration que l'application $\det$ est définie comme la
    composée de
    \[
    \begin{array}{c c c}
      \raut{N} & \rightarrow & \raut{{\det(N)}} \\
      \Phi     & \mapsto     & \det(\Phi)
    \end{array}
    \]
    et de l'isomorphisme canonique $\raut{{\det(N)}} =
    \RR\CCC^{\infty}(\surg,\CC^*)$.

\begin{proof}
  \begin{itemize}
  \item[(\ref{suite2})] D'après
    le Lemme \ref{classification} nous avons un isomorphisme
    \begin{equation}\label{decdet}
      (N,c_N) \cong (\det(N),c_{\det(N)})\oplus (\TCC^{\rang(N)-1},conj)
      \tag{$*$}
    \end{equation}
    où $(\TCC^{\rang(N)-1},conj)$ est le fibré trivial de rang
    $\rang(N)-1$ sur $(\surg,c_{\sur})$, et $\det(N) = \Lambda^{\max} N$. Si $f$ est un élément de
    $\RR\CCC^{\infty}(\surg,\CC^*)$, on obtient, grâce à
    l'isomorphisme (\ref{decdet}), un automorphisme de $(N,c_N)$
    $\Phi_f = f\oplus \id_{\TCC^{\rang(N)-1}}$ vérifiant $\det(\Phi_f)
    = f$.
  \item[(\ref{suite3})] \'Etudions tout d'abord la surjectivité de
    $\rest$. Fixons pour cela une classe $[\Psi]$ dans
    $\pi_0(SL(\RR N))$. Choisissons aussi deux ouverts
    emboîtés $\RR V_i\subset\overline{\RR V_i}\subset\RR U_i$
    homéomorphes à des intervalles sur chaque composante
    $(\RR\surg)_i$, et $k$ paires d'ouverts emboîtés disjointes
    $V_1\subset\overline{V_1}\subset U_1,\ldots,V_k\subset
    \bar{V_k}\subset U_k$ de $\surg$ invariants sous l'action de
    $c_\sur$ et homéomorphes à des disques, tels que $U_i\cap \RR\surg
    = \RR U_i$ et $V_i\cap\RR\surg = \RR V_i$. Prenons un représentant
    $\Psi$ de la classe $[\Psi]$ tel que $\Psi$ soit l'identité en
    dehors des $\RR V_i$. En trivialisant $(N,c_N)$ au dessus des
    $U_i$, $\Psi_{|\RR\overline{V_i}}$ nous fournit $k$ chemins dans
    $SL_{\rang(N)}(\RR)$ dont les extrémités se recollent en
    l'identité. Ces lacets peuvent être homotopés dans
    $SL_{\rang(N)}(\CC)$ (qui est simplement connexe) sur le lacet
    constant égal à l'identité. Ainsi, pour chaque $i$ on peut
    prolonger $\Psi$ sur une composante de $U_i - \RR U_i$ puis sur
    tout $U_i$ grâce à la structure réelle de $(N,c_N)$ de sorte que
    ce prolongement soit l'identité sur $U_i - V_i$. On obtient un
    automorphisme réel de $(N,c_N)$ relevant la classe $[\Psi]$ en
    prolongeant en dehors des $U_i$ par l'identité.

    Considérons enfin la connexité du noyau de $\rest$. Fixons
    une triangulation $T$ de $(\surg,c_\sur)$ invariante sous l'action
    de $c_\sur$. Prenons $\Phi$ un élément de $\ker(\rest)$. On peut supposer que la restriction de $\Phi$ à $\RR\surg$ est l'identité. On
    homotope $\Phi$ sur $\id$ progressivement:
    \begin{enumerate}
    \item tout d'abord au-dessus des sommets de $T$,
    \item puis au-dessus des arêtes de $T$, ce que l'on peut faire puisque $\pi_1(SL_{\rang(N)}(\CC))$ est trivial,
    \item et enfin au-dessus des faces de $T$, ce qui est possible car
      $\pi_2(SL_{\rang(N)}(\CC))$ est trivial.
    \end{enumerate}
  \end{itemize}
\end{proof}

\begin{Rem}\label{rem1}
  \begin{itemize}
  \item Si $\rang (N) \geq 3$, $\pi_0(SL(\RR N))\cong
    \left(\ZZ/2\ZZ\right)^k$, où $k = b_0(\RR \Sigma_g)$.
  \item Le cas $\rang (N) = 2$ est particulier. Le Lemme \ref{tauid}
    en donne une description.
  \item Lorsque $\rang(N) = 1$, l'application $\det:\raut{N} \rightarrow \RR\CCC^{\infty}(\surg,\CC^*)$ dans la suite exacte (\ref{suite2}) est en fait un isomorphisme, et le groupe $\rsaut{N}$ est trivial.
  \end{itemize}
\end{Rem}

\begin{Lemme}\label{tauid}
  Soit $(N,c_N)$ un fibré vectoriel complexe de rang $2$ muni d'une
  structure réelle sur $(\surg,c_\sur)$. Alors, le groupe $\pi_0(SL(\RR N))$ est isomorphe à
  $\ZZ^{k_+}\times\left(\ZZ/2\ZZ\right)^{k_-}$,  
  où 
\[
\begin{array}{c}
k_+ = \card\left\{1\leq i\leq k\ |\ w_1(\RR N)([\RR\surg]_i) =
    0\right\}\\
 \text{et } k_- = \card\left\{1\leq i\leq k\ |\ w_1(\RR
    N)([\RR\surg]_i) \neq 0\right\}.
\end{array}
\]
 De plus, avec les notations du
  Lemme \ref{decomposition} et sous cet isomorphisme, l'image de
  $\rest(-\id)\in\pi_0(SL(\RR N))\cong\ZZ^{k_+}\times\left(\ZZ/2\ZZ\right)^{k_-}$ par l'application de réduction modulo $2$ $\ZZ^{k_+}\times\left(\ZZ/2\ZZ\right)^{k_-}\rightarrow (\ZZ/2\ZZ)^k$ est égale à $(w_1(\RR N)([\RR \surg]_1),\ldots,w_1(\RR
  N)([\RR\surg]_k))$.
\end{Lemme}

\begin{proof}
  Fixons $1\leq i\leq k$. Nous distinguons deux cas, suivant que $(\RR
  N)_i\rightarrow (\RR \surg)_i$ est orientable ou non.
  \begin{itemize}
  \item[Si $w_1(\RR N)(\text{[}\RR\surg\text{]}_i) = 0$:] Nous pouvons
    dans ce cas trivialiser le fibré $(\RR N)_i$. Dans cette
    trivialisation, un automorphisme du fibré à homotopie près est
    donné par une classe dans le groupe fondamental de
    $SL_2(\RR)$. Or, ce groupe fondamental est isomorphe à $\ZZ$. Dans
    le cas particulier de $-\id$, nous obtenons le lacet constant égal
    à $\left(\begin{array}{c c} -1 & 0 \\ 0 & -1 \end{array}\right)$,
    qui est homotope au lacet constant égal à l'identité.
  \item[Si $w_1(\RR N)(\text{[}\RR\surg\text{]}_i) = 1$:] Le raisonnement
    employé ici s'inspire de \cite{Solomon}. Le fibré $(\RR N)_i$ est
    isomorphe au fibré $[0,1]\times \RR^2/(0,v)\sim(1,r(v))$ où
    $r(x_1,x_2) = (x_1,-x_2)$. Sous cet isomorphisme, un élément de
    $SL((\RR N)_i)$ correspond à une application $f :
    [0,1] \rightarrow SL_2(\RR)$ vérifiant $f(0) = rf(1)r$. Cette dernière condition se réécrit grâce à la décomposition polaire, produit d'une rotation par une triangulaire supérieure à coefficients diagonaux strictement positifs,
\[
f(t) = r_{\theta(t)}
\left(
  \begin{array}{c c}
    \lambda(t) & a(t) \\
    0          & \frac{1}{\lambda(t)}
  \end{array}
\right)
\]
en $\theta(0) = -\theta(1) \mod 2\pi$ et $a(0) = -a(1)$. D'autre part, cette décomposition fournit aussi une rétraction de $SL_2(\RR)$ sur $SO_2(\RR)$
\[
\begin{array}{c c c c}
h : & [0,1]\times SL_2(\RR) & \rightarrow & SL_2(\RR)\\
    &\left(s,r
\left(\begin{array}{c c}
  \lambda & a \\
  0       & \frac{1}{\lambda}
\end{array}\right)\right)
                           & \mapsto  & r 
\left(\begin{array}{c c}
  (1-s)\lambda + s & (1-s)a \\
  0       & \frac{1}{(1-s)\lambda +s}
\end{array}\right)
\end{array}
\]
qui préserve les conditions précédentes.
En appliquant cette rétraction, nous obtenons une homotopie $h(s,f)$ joignant $f$ à $r_{\theta}$.
Ainsi, une classe d'homotopie d'éléments de
    $SL((\RR N)_i)$ correspond à une classe d'homotopie
    d'applications $\theta:[0,1]\rightarrow \RR/2\pi\ZZ$ avec
    $\theta(0) = -\theta(1)\mod(2\pi)$, c'est-à-dire à une classe d'homotopie de sections d'une bouteille de Klein (vue comme fibré en cercles au-dessus du cercle). Or,
    il n'y a que deux telles classes qui sont définies par les lacets $\theta(t) = 0$ et $\theta(t) = \pi$. Le premier correspond à $\id$, le second à $-\id$.   
  \end{itemize}
\end{proof}

\begin{Notation}
On note $\res$ la composée de $\rest$ avec la réduction modulo $2$
\[
\pi_0(SL(\RR N)) \rightarrow \pi_0(SL(\RR N))/2\pi_0(SL(\RR N)) \cong (\ZZ/2\ZZ)^k,
\]
 et on notera avec un indice $i$ entre $1$ et $k$ les composantes de $\res$.
\end{Notation}

\section{\'Etude des automorphismes réels de déterminant $1$}\label{partie1}

Nous nous intéressons dans cette section à l'action du groupe
$\rsaut{N}$ des automorphismes réels de déterminant $1$ d'un fibré
$(N,c_N)$ sur le fibré $\Det$.

Comme nous l'avons noté dans la Remarque \ref{rem1}, lorsque le rang
de $N$ est égal à $1$ le groupe $\rsaut{N}$ est
trivial et agit donc trivialement sur le fibré $\Det$. 
Nous supposerons dans cette section que $N$ est de rang
au moins deux.

D'après le Lemme \ref{decomposition}, cette action se factorise à travers le morphisme $\res$ et nous commençons par réinterpréter ce morphisme en termes
d'action des automorphismes sur les structures $Pin^{\pm}$ de $\RR
N$. Nous étudions ensuite l'action des
automorphismes de déterminant $1$ sur le fibré $\Det$ et nous démontrons en particulier le Théorème \ref{action1} qui est un cas particulier du Théorème \ref{toutdeg}.

\begin{Theoreme}\label{action1}
  Soit $t$ un automorphisme de $(N,c_N)$ relevant l'identité et de déterminant $1$. Alors $t$ préserve les orientations du fibré $\Det$ si et seulement si le nombre de composantes de $\RR N$ dont les structures $Pin^\pm$ sont échangées sous l'action de $t$ est pair.
\end{Theoreme}

Celui-ci fournit une interprétation topologique de l'action de $\rsaut{N}$ sur les orientations du fibré $\Det$. En guise d'application, nous donnons une condition nécessaire et suffisante pour que le fibré déterminant au-dessus d'un lacet d'opérateurs de Cauchy-Riemann soit orientable.

\subsection{Automorphismes réels et structures $Pin^{\pm}$ sur $\RR
  N$}\label{partiepin}
Commençons par quelques rappels au sujet des structures
$Pin^{\pm}$. Fixons un fibré vectoriel réel $V$ de rang
$n$ au moins $2$ au-dessus d'une base $B$ et notons $R_{V}$ le fibré principal des repères
associé. Ce fibré a pour groupe structural $GL_n(\RR)$; celui-ci a un revêtement
double (topologique) non trivial au-dessus de chaque composante connexe qui admet deux structures de groupe distinctes
relevant celle de $GL_n(\RR)$. Ces deux groupes sont notés
$\widetilde{GL_n}^+(\RR)$ et $\widetilde{GL_n}^-(\RR)$. Une structure
$\widetilde{GL_n}^{\pm}$ sur $V$ est la donnée d'une classe d'isomorphisme de paires
composées d'un fibré principal $P_{\widetilde{GL}^{\pm}}$ sur $B$ de
groupe structural $\widetilde{GL_n}^\pm(\RR)$ et d'un morphisme de fibrés
$P_{\widetilde{GL}^{\pm}}\rightarrow R_{V}$, revêtement double
équivariant pour les actions de $GL_n(\RR)$ et
$\widetilde{GL_n}^{\pm}(\RR)$.

\begin{Rem}\label{pinmetrique}
 La
  définition que nous donnons ici ne diffère que peu de la
  définition usuelle des structures $\pinpm$ utilisant les groupes $O_n(\RR)$ et
  $\pinpm_n(\RR)$ (voir par exemple \cite{kirby}) au lieu de $GL_n(\RR)$ et $\widetilde{GL_n}^\pm(\RR)$, car $GL_n(\RR)$
  se rétracte sur $O_n(\RR)$. Nous avons une correspondance naturelle entre les structures $\pinpm$ et $\widetilde{GL_n}^{\pm}$. Ceci nous permet donc de parler de structures $\pinpm$ sans se soucier de fixer de métrique sur les fibrés considérés.
\end{Rem}

Pour terminer, notons que l'obstruction à l'existence d'une structure
$Pin^+$ (resp. $Pin^-$) sur le fibré $V$ est donnée par la classe caractéristique
$w_2(V)$ (resp. $w_2(V) + w_1^2(V)$) (voir \cite{kirby}).

Les fibrés $(\RR N)_i$ ayant pour bases des cercles, nous avons
$w_2((\RR N)_i) = w_1^2((\RR N)_i) = 0$. Ils admettent donc un
ensemble de structures $\pinpm$ que nous noterons $\pinpm((\RR
N)_i)$. Ces ensembles sont des espaces affines sur
$\hczdeux{(\RR\surg)_i}$ (voir \cite{kirby}).

D'autre part, les éléments de $\rsaut{N}$ agissent sur ces structures
par tiré-en-arrière: pour $\Phi\in\rsaut{N}$, $P$ et $P'$ deux
structures $\pinpm$ sur $(\RR N)_i$, nous avons $\Phi^*(P') = P$ s'il
existe un morphisme $\tilde{\Phi} : P \rightarrow P'$ tel que le
diagramme suivant commute
\[
\xymatrix{
 P \ar[d] \ar[r]^{\tilde{\Phi}} & P'\ar[d]  \\
 R_{(\RR N)_i} \ar[r]_{\Phi_{|(\RR N)_i}} & R_{(\RR N)_i}.
}
\]
Cette action est en fait une action par translation sur $\pinpm((\RR
N)_i)$ et fournit un morphisme
\[
\tau :\rsaut{N} \rightarrow
\hczdeux{\RR\surg} = (\ZZ/ 2\ZZ)^k.
\]

Nous rappelons enfin le lemme suivant (voir \cite{Solomon}, Lemme $2.6$) qui relie
l'action des automorphismes réels de $N$ sur les structures $\pinpm$
de $\RR N$ et les classes d'homotopie de leurs restriction à $\RR N$.

\begin{Lemme}\label{homotopin}
  Les applications $\res$ et $\tau$ sont égales.\qed
\end{Lemme}

 Plus généralement,
  si $M$ est un fibré vectoriel réel de rang au moins $2$ sur un
  cercle, alors
  \begin{itemize}
  \item lorsque $M$ n'est pas orientable, il y a exactement deux
    classes d'homotopie d'automorphismes de déterminant $1$ de $M$;
    l'une (celle de l'identité) préserve les structures $\pinpm$ de
    $M$ l'autre les échange.
  \item lorsque $M$ est orientable et de rang au moins $3$, il y a
    exactement deux classes d'homotopie d'automorphismes de déterminant
    $1$ de $M$; l'une préserve les structures
    $\pinpm$ de $M$ l'autre les échange.
  \item lorsque $M$ est orientable et de rang $2$, les classes
    d'homotopie d'automorphismes de déterminant $1$ de $M$ sont toutes
    multiples de celle provenant d'un générateur de $\pi_1(SO_{2}(\RR))$; les multiples pairs préservent les structures
    $\pinpm$ de $M$, les multiples impairs les échangent.
  \end{itemize}

\subsection[Action des automorphismes réels de déterminant $1$]{Action des automorphismes réels de déterminant $1$ sur les
  orientations du fibré déterminant}

Prenons tout d'abord une structure complexe $J\in \RR J(\surg)$
qui sera fixée pendant tout ce paragraphe.

Nous allons établir le Lemme \ref{refl} avant de démontrer le
Théorème \ref{action1}. Si $(L,c_L)$ est
un fibré en droites holomorphe muni d'une structure réelle
sur $(\surg,c_\sur)$, nous noterons $\MM(L)_{+1}$ l'espace des sections
méromorphes de $L$ qui sont $\ZZ/2\ZZ$-équivariantes. Pour une telle
section $\sigma\in\MM(L)_{+1}$, nous noterons respectivement
$\dv_0(\sigma)$ et $\dv_\infty(\sigma)$ les diviseurs de zéros et de
pôles de $\sigma$.

Si $(M,c_M)$ est un autre fibré en droites holomorphe sur
$(\surg,c_\sur)$ alors une section $\sigma\in\MM(M\otimes L^*)_{+1}$
définit un nouveau fibré en droites holomorphe $(F_\sigma =
\text{graphe}(\sigma),c_F)$ de la façon suivante. Si on note
$U_{0,\sigma} = \surg-\dv_\infty(\sigma)$ et $U_{\infty,\sigma} =
\surg - \dv_0(\sigma)$, alors
\[
F_{|U_0} = \{(v,\sigma(v)),\ v\in L\}\subset L\oplus M,\ F_{|U_{\infty}} =
\{(\sigma^*(w),w),\ w \in M\}\subset L\oplus M.
\]
Ici $\sigma^*$ est l'élément de $\MM(L\otimes M^*)_{+1}$ vérifiant
$\sigma^* \sigma = 1$. D'autre part, $F_{\sigma}\subset L\oplus M$ est
naturellement muni de sa structure réelle $c_F$ venant de celle de
$L\oplus M$. On remarque de plus que
\[
\begin{array}{l}
  F_x = L_x,\text{ quand } x\in \dv_0(\sigma)\\
  F_x = M_x,\text{ quand } x\in \dv_{\infty}(\sigma).
\end{array}
\]

 Pour tout supplémentaire complexe (non nécessairement holomorphe) $(G,c_G)\subset (L\oplus M,c_{L\oplus M})$ de $F_\sigma$, on note
  respectivement $r_{F_\sigma}, r_L, r_M$ les réflexions par rapport à
  $G,M,L$ et d'axes $F_\sigma,L,M$, et $t_{L,\sigma} =
  r_{F_\sigma}\circ r_L$, $t_{M,\sigma} = r_{F_\sigma}\circ r_M$. Les classes d'homotopie de $t_{L,\sigma}$ et $t_{M,\sigma}$ ne dépendent pas du choix de $(G,c_G)$. 

Le lemme suivant étudie un cas particulier de l'action qui nous
intéresse.

\begin{Lemme}\label{refl}
  Soient $(L,c_L)$ et $(M,c_M)$ deux fibrés en droites holomorphes
  munis de structures réelles sur $(\Sigma_g,c_\Sigma)$. Fixons
  $\sigma\in \MM(M\otimes L^*)_{+1}$, et $(F_\sigma =
  \text{graphe}(\sigma),c_F)\subset (L\oplus M,c_{L\oplus M})$. Choisissons un supplémentaire complexe $(G,c_G)$ de $F_\sigma$. On a
  alors:
  \begin{enumerate}[(i)]
  \item $t_{L,\sigma}$ (resp. $t_{M,\sigma}$) préserve les
    orientations du fibré $\Det[L\oplus M]$ si et seulement si
    $\deg(L) - \deg(F_\sigma)$ (resp. $\deg(M) - \deg(F_\sigma)$) est
    pair.
  \item $\deg(L) = \deg(F_\sigma) + \card(\dv_\infty(\sigma))$ et
    $\deg(M) = \deg(F_\sigma) + \card(\dv_0(\sigma))$.
  \item $(\res)_i(t_{L,\sigma}) =
    \card(\dv_\infty(\sigma_{|\RR\Sigma_i})) \mod 2$ et
    $(\res)_i(t_{M,\sigma}) = \card(\dv_0(\sigma_{|\RR\Sigma_i})) \mod
    2$.
  \end{enumerate} 
\end{Lemme}

Remarquons avant de le démontrer que ce lemme implique le Théorème \ref{action1} pour les automorphismes
  $t_{F,\sigma}$ et $t_{M,\sigma}$. Ces trois points nous assurent en effet que les automorphismes
  $t_{L,\sigma}$ et $t_{M,\sigma}$ préservent les orientations du
  fibré $\Det[L\oplus M]$ si et seulement si
  $\dd\sum_{i=1}^k(\res)_i(t_{L,\sigma})$ et
  $\dd\sum_{i=1}^k(\res)_i(t_{M,\sigma})$ sont pairs. En appliquant le
  Lemme \ref{homotopin} nous obtenons le Théorème \ref{action1} dans ce cas.

\begin{proof}
  \begin{enumerate}[(i)]
  \item Nous démontrons ce premier point seulement pour
    $t_{L,\sigma}$, le résultat pour $t_{M,\sigma}$ se traitant de
    façon tout à fait analogue. Notons $\DB_L$ (resp. $\DB_{L\oplus
      M}$) l'opérateur de Cauchy-Riemann sur $L$ (resp. sur $L\oplus
    M$) associé à la structure holomorphe du fibré (voir
    \cite{koba}). L'automorphisme $-1 : L \rightarrow L$ fixe
    l'opérateur $\DB_L$ et agit sur la droite $\ddet (\DB_L)$ par
    multiplication par $(-1)^{\dim(\ker(\DB_L)) +
      \dim(\coker(\DB_L))}$. Ainsi, l'automorphisme $r_L : L\oplus
    M\rightarrow L\oplus M$ fixe l'opérateur $\DB_{L\oplus M}$ et agit
    sur la droite $\ddet (\DB_{L\oplus M}) = \ddet(\DB_L) \otimes
    \ddet(\DB_M)$ par multiplication par $(-1)^{\dim(\ker(\DB_L)) +
      \dim(\coker(\DB_L))}$. Donc $r_L$ préserve les orientations de
    la droite $\ddet(\DB_{L\oplus M})$ (et donc du fibré $\Det[L\oplus
    M]$) si et seulement si $\dim(\ker(\DB_L)) + \dim(\coker(\DB_L))$
    est pair. Or, le Théorème de Riemann-Roch affirme que
    $\dim(\ker(\DB_L)) - \dim(\coker(\DB_L)) = \deg(L) + 1 - g$.

    Fixons ensuite un opérateur de Cauchy-Riemann réel $\DB_G$ sur $G$
    et appliquons le même raisonnement que précédemment aux fibrés
    $F_\sigma$ et $G$. Nous voyons alors que l'automorphisme
    $r_{F_\sigma}$ préserve les orientations de la droite
    $\ddet(\DB_{F_{\sigma}})\otimes \ddet(\DB_G)$ (et donc du fibré
    $\Det[F_{\sigma}\oplus G] = \Det[L\oplus M]$) si et seulement si
    $\deg(F_\sigma) + 1 - g$ est pair.

    Ainsi, l'automorphisme $t_{L,\sigma} = r_{F_\sigma}\circ r_L$
    préserve les orientations du fibré $\Det[L\oplus M]$ si et
    seulement si $\deg(L) + \deg(F_\sigma) + 2 - 2g$ est pair. D'où le
    premier point.
  \item Soit une section $\sigma_L \in \MM(L)_{+1}$ telle que ses pôles
    et zéros soient disjoints de ceux de $\sigma$. Posons $\sigma_F =
    (\sigma_L, \sigma(\sigma_L)) \in \MM(F)_{+1}$, et notons
     $pr_M : L\oplus
    M \rightarrow M$ la projection naturelle. Alors
    \[
    \begin{array}{l}
      \dv_0(\sigma_L) = \dv_0(\sigma_F)\\
      \dv_{\infty}(\sigma_L) = \dv_{\infty}(\sigma_F) - \dv_{\infty}(\sigma)
    \end{array}
    \]
    et
    \[
    \begin{array}{l}
      \dv_0(pr_M(\sigma_F)) = \dv_0(\sigma_F) + \dv_0(\sigma)\\
      \dv_{\infty}(pr_M(\sigma_F)) = \dv_{\infty}(\sigma_F).
    \end{array}
    \]
    D'où en soustrayant ces égalités, $\deg(L) = \deg(F_\sigma) +
    \card(\dv_{\infty}(\sigma))$ et $\deg(M) = \deg(F_\sigma) +
    \card(\dv_0(\sigma))$.
  \item Nous distinguons deux cas, suivant que $(\RR L\oplus \RR M)_i$
    est orientable ou non.

    Si $(\RR L\oplus \RR M)_i$ est orientable, on le trivialise. Les
    automorphismes $t_{L,\sigma}$ et $t_{M,\sigma}$ induisent alors
    deux lacets dans $SL_2(\RR)$. On calcule $(\res)_i(t_{L,\sigma})$ et
    $(\res)_i(t_{M,\sigma})$ en comptant la parité des classes d'homotopie des lacets précédents
    dans $\pi_1(SL_2(\RR))\cong \ZZ$. Or ces parités se mesurent en comptant le nombre de demi-tours que
    fait $(\RR F)_i$ respectivement par rapport à $(\RR L)_i$ et $(\RR
    M)_i$. En effet, en prenant une trivialisation telle que $(\RR L)_i$ et $(\RR M)_i$ soient orthogonaux, et en homotopant $(\RR G)_i$ sur l'orthogonal de $(\RR F_\sigma)_i$ ces
    deux lacets dans $SL_2(\RR)$ sont en fait homotopes à des lacets de
    rotations d'angles égaux respectivement au double de l'angle entre
    $(\RR L)_i$ et $(\RR F_\sigma)_i$ et au double de l'angle entre
    $(\RR M)_i$ et $(\RR F_\sigma)_i$.

    Nous avons donc $(\res)_i(t_{L,\sigma})=
    \card(\dv_0(\sigma_{|\RR\sur_i})) \mod 2$ et $(\res)_i(t_{M,\sigma})=
    \card(\dv_\infty(\sigma_{|\RR\sur_i})) \mod 2$. On conclut
    en remarquant que dans ce cas \newline $\card(\dv_\infty(\sigma_{|\RR\Sigma_i})) =
    \card(\dv_0(\sigma_{|\RR\Sigma_i})) \mod 2$.

    Si maintenant $(\RR L\oplus \RR M)_i$ n'est pas orientable, deux
    cas se
    présentent:
    \begin{itemize}
    \item soit on peut homotoper le fibré $(\RR F)_i$ sur le fibré
      $(\RR L)_i$ (si $\card(\dv_\infty(\sigma_{|\RR\Sigma_i})) = 0
      \mod 2$) et alors, d'après le Lemme \ref{tauid},
      $(\res)_i(t_{L,\sigma}) = (\res)_i(\id) = 0 =
      \card(\dv_\infty(\sigma_{|\RR\Sigma_i})) \mod 2$ et
      $(\res)_i(t_{M,\sigma}) = (\res)_i(-\id) = 1 =
      \card(\dv_0(\sigma_{|\RR\Sigma_i})) \mod 2$.
    \item soit on peut homotoper $(\RR F)_i$ sur $(\RR M)_i$ (si
      $\card(\dv_0(\sigma_{|\RR\Sigma_i})) = 0 \mod 2$) et alors
      $(\res)_i(t_{L,\sigma}) = (\res)_i(-\id) = 1 =
      \card(\dv_\infty(\sigma_{|\RR\Sigma_i}))$ et
      $(\res)_i(t_{M,\sigma}) = (\res)_i(\id) = 0 =
      \card(\dv_0(\sigma_{|\RR\Sigma_i}))$.
    \end{itemize}
    Ce qui conclut la démonstration du troisième point.
  \end{enumerate}
 \end{proof}

\begin{proof}[Démonstration du Théorème \ref{action1}]
  Remarquons tout d'abord qu'il nous suffit de traiter le cas $\rang
  (N) = 2$. En effet, pour tout sous-fibré $(N',c_{N'})$ de $N$ de rang
  $2$, l'application
  \[
  \begin{array}{c c c}
    \rsaut{N'} & \rightarrow & \rsaut{N}\\
    \tilde{t}  & \mapsto     & \tilde{t} \oplus \id\\
  \end{array}
  \]
  commute avec $\tau_N$ et $\tau_{N'}$ et induit une surjection au
  niveau des classes d'homotopie.

  Supposons donc que $\rang(N) = 2$ et prenons $t\in\rsaut{N}$. Fixons
  de plus un opérateur de Cauchy-Riemann induisant une structure
  holomorphe sur $N$ faisant de ce dernier un fibré vectoriel
  holomorphe décomposable en une somme de deux fibrés en droites
  holomorphes $(L,c_L)$ et $(M,c_M)$.

  Prenons $\sigma_{M\otimes L^*}$ une section méromorphe et
  $\ZZ/2\ZZ$-équivariante de $M\otimes L^*$. Pour toute fonction
  méromorphe $f$ sur $\Sigma_g$, on obtient une nouvelle section $\sigma =
  f\sigma_{M\otimes{L}^*}$ dont on contrôle la parité des pôles et des
  zéros sur chaque composante réelle de $\Sigma_g$ en modifiant les
  pôles et les zéros de $f$. On peut en effet considérer $f$ comme
  section holomorphe d'un fibré $\OO_{\Sigma}(x_1+\ldots+x_n)$ (pour
  $n$ assez grand) où l'on place les points $x_i$ sur $\RR \surg$ pour
  changer à volonté la parité de ses zéros et pôles (qui est la même
  sur chaque composante de $\RR\Sigma_g$). En particulier, prenons $f$
  de telle sorte que $(\res)_i(t) =
  \card(\dv_0(\sigma_{|\RR\sur_i}))\mod 2$ pour tout $1\leq i \leq
  k$. Le Lemme \ref{refl} nous assure alors que $\res(t) =
  \res(t_{L,\sigma})$, donc $t$ et $t_{L,\sigma}$ ont même action sur
  les structures $\pinpm$ sur $\RR N$ d'après le Lemme
  \ref{homotopin}.
  D'autre part, l'action d'un automorphisme sur les orientations du
  fibré $\Det$ ne dépend que de son image par $\res$ et non par
  $\rest$. La conclusion du Théorème découle ainsi du
  Lemme \ref{refl}.
\end{proof}

\subsection{Lacets d'opérateurs de Cauchy-Riemann et structures
  $\pinpm$}

Nous décrivons ici l'orientabilité du fibré déterminant au-dessus de certains lacets d'opérateurs de
Cauchy-Riemann réels. Pour ce faire, supposons fixée une structure
complexe $J\in\RR J(\surg)$. Considérons une famille $N_z \rightarrow
\CC P^1$, $z\in\CC P^1$, de fibrés vectoriels complexes sur $(\surg,c_\sur)$. Nous
supposerons de plus que les fibrés $N_z$ admettent des structures
réelles $c_{N_z}$ lorsque $z$ est dans $\RR P^1$.

Nous disposons donc de $k$ familles $(\RR N_z)_i$, $z\in\RR P^1$, de
fibrés vectoriels réels sur $\RR \surg$. \`A $z\in\RR P^1$ fixé,
chacun de ces fibrés $(\RR N_z)_i$ admet une structure $\pinpm$;
toutefois, lorsque $z$ varie, il n'est pas garanti qu'il existe une
famille continue de telles structures sur tout $\RR P^1$. Si c'est le cas, nous
dirons que la famille $(\RR N_z)_i$ admet une structure $\pinpm$.

Si nous prenons maintenant un lacet $(\DB_z)_{z\in\RR P^1}$
d'opérateurs de Cauchy-Riemann réels sur $(N_z,c_{N_z})_{z\in\RR
  P^1}$, nous obtenons naturellement un fibré en doites réelles
$\ddet(\DB_z)\rightarrow \RR P^1$. Nous dirons que le lacet $(\DB_z)_{z\in\RR P^1}$ est
orientable si le fibré $\ddet(\DB_z)$ est orientable.

\begin{Exple}
  Un exemple de tel lacet est obtenu de la façon suivante. Considérons
  une extension $0\rightarrow (F,c_F)\rightarrow (M,c_{M}) \rightarrow (G,c_G) \rightarrow 0$, où $F,G$ (resp. $M$) sont des
  fibrés en droites (resp. de rang $2$) holomorphes sur $\Sigma_g$. Celle-ci admet une
  classe d'extension $\mu \in H^1(\Sigma_g, F\otimes G^*)_{+1}$. Comme
  $H^1(\Sigma_g, F\otimes G^*)_{+1} =  \Gamma
  (\Sigma_g,\Lambda^{0,1}\Sigma_g \otimes (F\otimes G^*))_{+1} /
  \im(\DB_{F\otimes G^*})_{+1}$, on notera $\tilde{\mu}$ un
  représentant de $\mu$ dans $ \Gamma
  (\Sigma_g,\Lambda^{0,1}\Sigma_g \otimes (F\otimes G^*))_{+1}$. La classe
  $\mu$ induit une déformation $(N_z)_{z\in\CC}$, de fibre
  exceptionnelle $N_0 = F\oplus G$ et trivialisable sur $\CC^*$, de
  fibre générique $N_z \cong M$, $z\in \CC^*$. Cette déformation s'étend
  donc à tout $\CC P^1$, et on obtient un lacet d'opérateurs donnés
  par $\DB_x = \left(\begin{array}{c c} \DB_F & x\mu \\ 0 &
      \DB_G \end{array}\right)$, $x\in \RR$. L'orientabilité de ce
  lacet est alors décrite par le Théorème \ref{familles}.
\end{Exple}

Le résultat suivant donne une condition topologique sur
l'orientabilité des lacets d'opérateurs considérés (comparer avec la Proposition 8.1.7 de \cite{fooo}).

\begin{Theoreme}\label{familles}
  Fixons une structure complexe $J$ sur $(\surg,c_\sur)$. Soit $N_z
  \rightarrow \CC P^1$, $z\in \CC P^1$, une famille de fibrés vectoriels complexes de rang au moins $2$ sur
  $(\surg,c_\sur)$ admettant des structures réelles $c_{N_z}$ lorsque
  $z\in \RR P^1$. Supposons de plus qu'il existe un point réel $p\in\RR\surg$ tel que le fibré $(\RR N_z)_p$ sur $\RR P^1$ soit orientable. Alors, un lacet $(\DB_z)_{z\in\RR P^1}$ d'opérateurs
  de Cauchy-Riemann réels sur $(N_z)_{z\in\RR P^1}$ est orientable si
  et seulement si les familles $(\RR N_z)_i$ qui n'admettent pas de
  structure $\pinpm$ sont en nombre pair.
\end{Theoreme}
Faisons tout d'abord quelques remarques préliminaires à la
démonstration de ce Théorème. Fixons les deux ouverts de cartes affines $0\in U_0$ et $\infty\in U_1$ de $\CC P^1$. Les familles $(N_z,c_{N_z}) \rightarrow \RR P^1$, $z\in\RR P^1$, de fibrés vectoriels complexes munis de structures réelles sur $(\Sigma_g,c_\sur)$ sont classifiées par $H^1(\RR P^1,\RR GL(N))$ (où $(N_z,c_{N_z}) \cong (N,c_N)$ pour $z\in \RR
P^1$). Une telle famille est donnée par une application de recollement
\[
\left\{
  \begin{array}{c c c}
    \RR U_0\cap \RR U_1 \times N & \rightarrow & \RR U_0\cap \RR U_1 \times N \\
    (z , \xi)            & \mapsto     & (z, t_z(\xi))
  \end{array}
\right.
\]
où $t_z = \id_N$ pour $z > 0$ et $t_z\in \RR GL(N)$ pour $z<0$. Toutefois, si l'on impose que cette famille s'étende sur tout $\CC P^1$, il faut que $t_z$ pour $z<0$ soit homotope dans $GL(N)$ à $\id_N$, et on obtient alors un prolongement donné par l'application de recollement
\[
\left\{
  \begin{array}{c c c}
    U_0\cap U_1 \times N & \rightarrow & U_0\cap U_1 \times N \\
    (z , \xi)            & \mapsto     & (z, t_z(\xi))
  \end{array}
\right.
\]
où $(t_z)_{z\in\CC^*}$ est une famille d'éléments de $GL(N)$ prolongeant $(t_z)_{z\in\RR^*}$. Or, l'existence d'une telle homotopie est équivalente à ce que le déterminant $\det(t_z)\in \RR\CCC^\infty(\surg,\CC^*)$ soit homotope à la fonction constante $1$ dans l'espace $\CCC^\infty(\surg,\CC^*)$ pour tout $z\in\RR^*$. 
 Si nous supposons de plus qu'il existe un point réel $p\in\RR\surg$ tel que le fibré $(\RR N_z)_p$ sur $\RR P^1$ soit orientable alors forcément $\det(t_z)$ pour $z<0$ est homotope dans $\RR\CCC^\infty(\surg,\CC^*)$ à la fonction constante égale à $1$.

Nous pouvons ainsi supposer que les familles considérées sont données par des applications de transition qui, restreintes à $\RR P^1$ sont dans $\rsaut{N}$.

\begin{proof}[Démonstration du Théorème \ref{familles}]
Prenons une famille $N_z \rightarrow \CC P^1$ de fibrés vectoriels complexes sur $(\surg,c_\sur)$ comme décrit dans l'énoncé du Théorème \ref{familles}. Elle est donnée par une application de recollement
\[
\left\{
  \begin{array}{c c c}
    U_0\cap  U_1 \times N & \rightarrow &  U_0\cap  U_1 \times N \\
    (z , \xi)            & \mapsto     & (z, t_z(\xi))
  \end{array}
\right.
\]
où lorsque $z\in\RR^*$, $t_z = \id_N$ pour $z>0$ et $t_z \in \rsaut{N}$ pour $z<0$.

  D'une part, une des familles $(\RR N_z)_i$, $z\in\RR P^1$, admet une structure
  $\pinpm$ si et seulement si les $t_z$, $z\in\RR^*$, agissent trivialement sur les structures
  $\pinpm$ sur $(\RR N)_i$.

  D'autre part, le fibré déterminant associé à un lacet
  $(\DB_z)_{z\in\RR P^1}$ d'opérateurs de Cauchy-Riemann réels sur
  $(N_z)_{z\in\RR P^1}$ est donné par les changements de cartes
  \[
  \left\{
    \begin{array}{c c c}
      U_0\cap U_1 \cap \RR P^1 \times \RR & \rightarrow & U_0\cap U_1 \cap \RR P^1 \times \RR \\
      (z,v)                                & \mapsto     & (z,t_z.v).
    \end{array}
  \right.
  \]
  Le Théorème \ref{action1} nous assure alors que ce fibré est
  orientable si et seulement si le nombre de composantes de $\RR N$
  dont les structures $\pinpm$ sont échangées sous l'action de $t_z$ est
  pair. Ce qui conclut le démonstration de Théorème \ref{familles}.
\end{proof}

\section{\'Etude des automorphismes réels au-dessus de
  l'identité}\label{automid}

Nous nous intéressons dans ce paragraphe à l'action des éléments de
$\raut{N}$ sur les orientations du fibré $\Det$.  Remarquons tout d'abord qu'un élément de $\raut{N}$ induit une permutation sur l'ensemble à $2k$ éléments formé des structures $Pin^\pm$ des composantes de $\RR N$. Nous noterons $\varepsilon_{\PPP^\pm} : \raut{N} \rightarrow \ZZ/2\ZZ$ le morphisme qui calcule la signature de cette permutation.

\begin{Lemme}\label{retourrangun}
  Soit $(N,c_N)$ un fibré vectoriel complexe sur $(\surg,c_\sur)$ de partie réelle non vide. L'action d'un automorphisme $f\in \raut{N}$ sur les orientations du fibré $\Det$ est la même que celle de $\det(f)\in\raut{\det(N)}$ sur les orientations de $\Det[\det(N)]$ si et seulement si $\varepsilon_{\PPP^\pm}(f) = 1$.
\end{Lemme}

\begin{proof}
  D'après le Lemme \ref{classification} nous avons un isomorphisme
\begin{equation}\label{decdet2}
  (N,c_N) \cong (\det(N),c_{\det(N)})\oplus (\TCC^{\rang(N)-1},conj).
  \tag{$*$}
\end{equation}
Ainsi, un élément $f\in\raut{N}$ se décompose sous l'isomorphisme
(\ref{decdet2}) en 
\[
f = (\det(f)\oplus \id_{\TCC^{\rang(N)-1}})\circ
\left((\det(f)\oplus \id_{\TCC^{\rang(N)-1}})^{-1}\circ f\right).
\]
Donc le signe de l'action de $f$ sur les orientations du fibré $\Det$ est donné par le produit de l'action de $(\det(f)\oplus \id_{\TCC^{\rang(N)-1}})$ de $(\det(f)\oplus \id_{\TCC^{\rang(N)-1}})^{-1}\circ f$ sur les orientations de $\Det[\det(N)\oplus\TCC^{\rang(N)-1}]$.

D'autre part, on remarque (voir \cite{kirby}) 
 que $(\det(f)\oplus \id_{\TCC^{\rang(N)-1}})$ agit trivialement sur les
structures $Pin^{\pm}$ sur les $(\RR N)_i$, donc $\varepsilon_{\PPP^\pm}((\det(f)\oplus \id_{\TCC^{\rang(N)-1}})\circ f) = \varepsilon_{\PPP^\pm}(f)$. L'automorphisme $(\det(f)\oplus \id_{\TCC^{\rang(N)-1}})\circ f$ est un
élément de $\rsaut{N}$ donc d'après le Théorème \ref{action1}, le signe de son action sur les orientations de $\Det[\det(N)\oplus\TCC^{\rang(N)-1}]$ est donné par $\varepsilon_{\PPP^\pm}(f)$.  

Enfin, étudier le signe de l'action de l'automorphisme
$(\det(f)\oplus \id_{\TCC^{\rang(N)-1}})$ sur les orientations de
$\Det$ est égal à celui de l'action de $\det(f)$ sur les orientations de $\Det[\det(N)]$.
\end{proof}
  
Notons $s_N : \raut{N}\rightarrow \ZZ/2\ZZ$ le morphisme associant à $f\in\raut{N}$ le signe de l'action de $\det(f)$ sur les orientations de $\Det[\det(N)]$. Pour résumer le Lemme \ref{retourrangun} en reprenant la suite exacte (\ref{suite2}) du Lemme \ref{decomposition}, nous avons la situation suivante
\[
\xymatrix{
0 \ar[r] & \rsaut{N}\ar[r] \ar[d]_{\varepsilon_{\PPP^\pm}} & \raut{N} \ar[r] \ar[d]_{\varepsilon_{\PPP^\pm}}^{s_N} \ar@/_3pc/[dd] & \raut{\det(N)} \ar[r] \ar[d]_{s_N} & 0 \\
 &\ZZ/2\ZZ & \ZZ/2\ZZ \times \ZZ/2\ZZ \ar[d]^{\text{augm}} & \ZZ/2\ZZ & \\
& & \ZZ/2\ZZ & &
}
\]
où augm$: \ZZ/2\ZZ \times \ZZ/2\ZZ \rightarrow \ZZ/2\ZZ$ est le morphisme d'augmentation et chaque morphisme vertical calcule le signe de l'action des différents groupes d'automorphismes sur les orientations des fibrés déterminants.

Pour terminer la démonstration du Théorème \ref{toutdeg} il ne nous reste plus qu'à étudier le signe $s_N$. Nous supposerons pour cela à partir de maintenant que $N$ est de rang $1$. Les éléments de $\raut{N}$ s'identifient alors canoniquement aux fonctions
\[
\RR \CCC^{\infty}(\surg,\CC^*) = \{f : \surg \rightarrow \CC^*\ |\
\overline{f\circ c_{\sur}} = f\}.
\]
Nous noterons $\modsig$ l'espace des fonctions $\CCC^{\infty}$ sur
$\surg$ à valeurs dans $\CC^*$, $\ZZ/2\ZZ$-équivariantes, prises à homotopie près.

Cette section est divisée en trois parties. Dans un premier temps, nous étudions l'action des automorphismes réels sur les structures $Spin$ réelles de $(\surg,c_\sur)$. Dans un deuxième temps, nous relions cette dernière action à celle sur les orientations du fibré déterminant et démontrons le Théorème \ref{toutdeg} pour un fibré en droites (voir \S \ref{enonce}). Nous obtenons alors comme au paragraphe précédent une interprétation topologique de l'action de $\raut{N}$ sur les orientations de $\Det$.

Nous terminons en déduisant la première classe de Stiefel-Whitney du fibré induit par le fibré déterminant sur le groupe de Picard réel d'une courbe $(\surg,c_\sur)$.

\subsection{Automorphismes réels et structures $Spin$ réelles}

Nous exposons dans ce paragraphe quelques préliminaires algébriques et topologiques à l'étude de l'action des éléments de $\raut{N}$ sur les orientations du fibré $\Det$. Nous commençons par expliciter la structure du groupe $\modsig$. Puis, nous rappelons quelques notions à propos des
structures $Spin$ réelles sur une courbe réelle. Nous décrivons
enfin l'action des éléments de $\raut{N}$ sur ces structures. Rappelons aussi que toutes les courbes réelles que nous considérons sont de partie réelle non vide.

\subsubsection{Structure de
  $\modsig$}\label{modsig}

Rappelons tout d'abord quelques résultats sur la topologie des courbes
réelles (voir par exemple \cite{nat} et \cite{Natanzon}).  Si $\surg \setminus \RR\surg$ a
deux composantes connexes, nous dirons que la courbe réelle est
séparante. On rappelle que
lorsque la courbe est séparante $k = b_0(\RR\surg)$ est congru à $g+1$
modulo $2$. On pose dans ce cas $m = \dd\frac{g+1-k}{2}$, qui est le
genre d'une des composantes de $\surg-\RR\surg$.
\begin{Definition}[voir \cite{Natanzon} et Figure \ref{base4sep}]
  On suppose que $\RR\surg$ est non vide, et soit $p \in \RR\surg$. Une base symplectique réelle de $\hhz{\surg}$
  est une base symplectique $(a_i,b_i)_{i=1\ldots g}$ vérifiant de
  plus
  \begin{itemize}
  \item[si $(\surg,c_\sur)$ n'est pas séparante]:

    \begin{enumerate}
    \item $(c_{\sur})_*(a_i) = a_i$, $i = 1,\ldots, g$
    \item $(c_{\sur})_*(b_i) = -b_i$, $i = 1,\ldots, k-1$
    \item $(c_{\sur})_*(b_i) = -b_i + a_i$, $i = k,\ldots, g$
    \item $p$ se trouve sur la composante de $\RR\surg$ homologue à
      $\dd\sum_{i=1}^g a_i$
    \end{enumerate}
  \item[si $(\surg,c_\sur)$ est séparante]:

    \begin{enumerate}
    \item $(c_{\sur})_*(a_i) = a_i$, $i = 1,\ldots, k-1$
    \item $(c_{\sur})_*(b_i) = -b_i$, $i = 1,\ldots, k-1$
    \item $(c_{\sur})_*(a_i) = a_{i+m}$, $i = k,\ldots, k+m-1$
    \item $(c_{\sur})_*(b_i) = -b_{i+m}$, $i = k,\ldots, k+m-1$
    \item $p$ se trouve sur la composante de $\RR\surg$ homologue à
      $\dd\sum_{i=1}^{k-1} a_i$.
    \end{enumerate}
  \end{itemize}
\end{Definition}
L'existence d'une telle base est démontrée par Natanzon dans
\cite{Natanzon}.

Nous allons décrire une famille $\mathcal{B}$ génératrice d'éléments
de $\modsig$, ce qui nous sera utile par la suite. Pour cela, fixons
une base symplectique réelle $(a_i,b_i)$ de $\hhz{\surg}$ en imposant
de plus que les éléments de cette base soient représentés par des
courbes simples disjointes lorsque c'est possible, que $a_i =
[\RR\surg]_i$ pour $i\in\{1,\ldots,k-1\}$, et que pour une courbe séparante, les courbes
$a_k,\ldots,a_g$ soient globalement stables par $c_\sur$. Nous numéroterons de plus
$(\RR\surg)_0$ la composante sur laquelle se trouve $p$ (voir Figure \ref{base4sep}).
\begin{figure}[h!]
\centering
\input{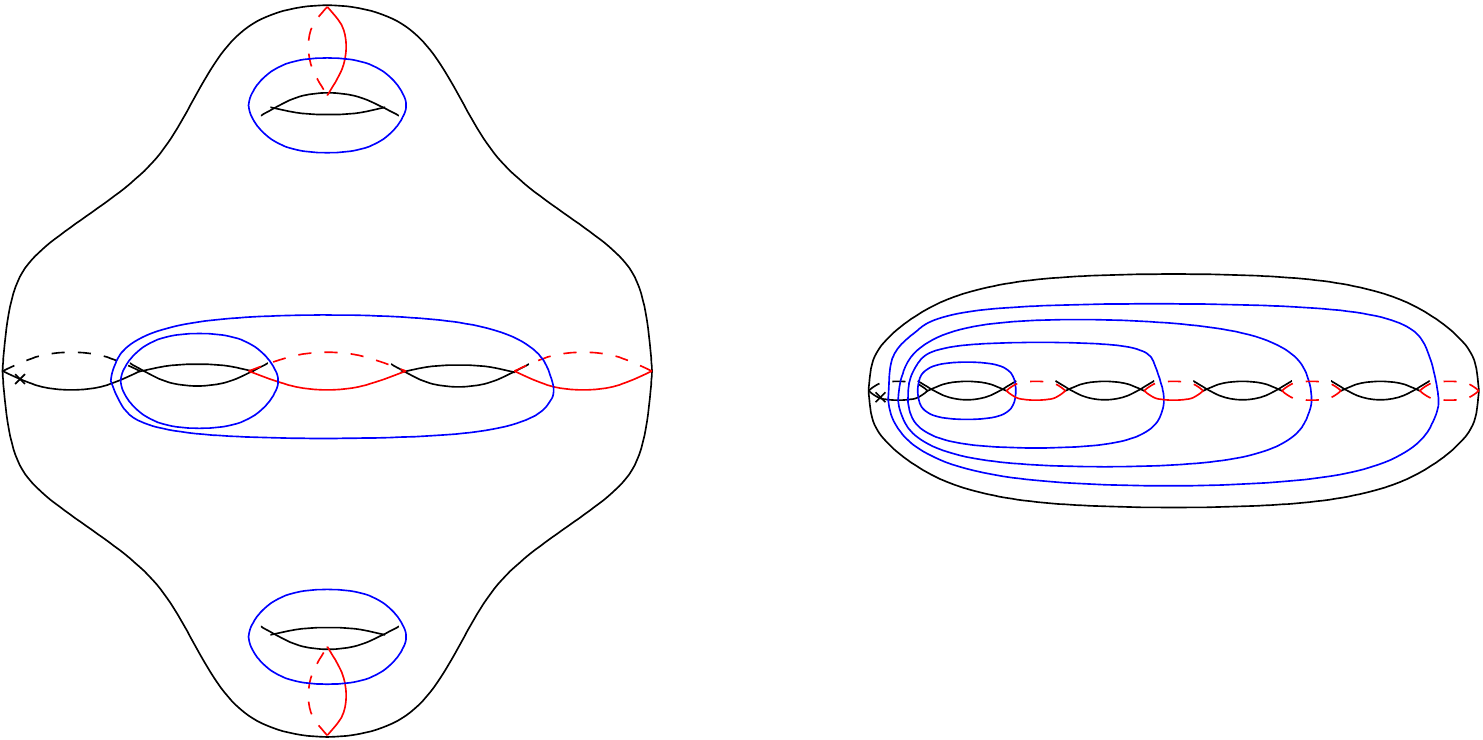_t}  
  \caption{Une base symplectique réelle pour $g=4$, $(\sur_4,c_\sur)$ séparante et $k = 3$ à gauche et $g=4$, $(\sur_4,c_\sur)$ non séparante et $k=2$ à droite}
  \label{base4sep}
\end{figure}

Remarquons qu'une fois fixée une telle base symplectique, les composantes de $\RR\surg$ sont orientées.

 Construisons maintenant la famille génératrice $\mathcal{B}$:
\begin{itemize}
\item[si $(\surg,c_\sur)$ n'est pas séparante:] On commence par choisir des petits
  voisinages tubulaires $A_i\subset \surg$, invariants par $c_\sur$, ayant pour âmes
  les courbes $a_i$, et disjoints les uns des autres. On choisit $t\in
  [-1,1]$ une coordonnée transverse à $a_i$ dans $A_i$ de telle sorte
  que $a_i$ soit la courbe de niveau $t = 0$, et $\theta \in a_i = S^1\subset\CC$
  l'autre coordonnée. Nous imposons de plus que $\theta$ croît le long de $a_i$ et que le difféomorphisme entre $A_i$ et $S^1\times [-1,1]$ donné par les coordonnées $(\theta,t)$ préserve les orientations. On suppose aussi que dans ces coordonnées, la
  structure réelle s'écrit $c_{\sur}(t,\theta) = (-t,\theta)$ si $i =
  0,\ldots,k-1$ et $c_{\sur}(t,\theta) = (-t,-\theta)$ si $i =
  k,\ldots,g$. On obtient alors les fonctions (à homotopie près) $f_i$,
  $i=0,\ldots,g$, réelles, en posant $f_i (t,\theta) = -\e^{i\pi t}$
  sur $A_i$ et en prolongeant par $1$ ailleurs (voir Figure \ref{ann}).
  \begin{figure}[!h]
    \centering
    \input{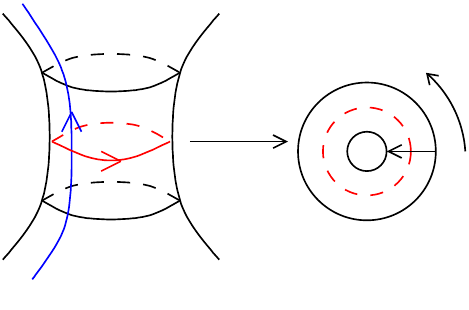_t}
    \caption{Construction de $f_i$}
    \label{ann}
  \end{figure}
\item[si $(\surg,c_\sur)$ est séparante:] On procède de la même façon que
  précédemment pour obtenir $k$ fonctions $f_0,\ldots,f_{k-1}$
  associées aux $k$ courbes $a_0,\ldots,a_{k-1}$. On construit ensuite
  $g+1-k$ autres fonctions $f_k,g_k,\ldots,f_{k+m-1},g_{k+m-1}$ en définissant
  $f_i$ (resp. $g_i$) de la même façon localement autour de $a_i$
  (resp. $b_i$) puis en prolongeant grâce à $c_{\sur}$ au voisinage de
  $a_{i+m}$ (resp. $b_{i+m}$) et par $1$ ailleurs.
\end{itemize}
 
Nous montrons dans la suite que les éléments de $\modsig$ sont, à peu
de choses près, totalement déterminés par leurs indices le long des
courbes dans $\surg$.

\begin{Lemme}
  L'application de calcul d'indice
  \[
  \begin{array}{c c c c}
    \widetilde{\ind}: & \RR \CCC^{\infty} (\surg,\CC^*) & \rightarrow & \RR\Hom(\hhz{\surg},\ZZ)\\
    & f                      & \mapsto     & (c\mapsto \dd\frac{1}{2i\pi} \dd\int_{c}\frac{\d f}{f})
  \end{array}
  \]
  passe au quotient en un morphisme
  \[
  \ind : \modsig \rightarrow \hcz{\surg}_{-1}.
  \]
\end{Lemme}

\begin{proof}
  Remarquons tout d'abord que le changement d'espace d'arrivée est
  conséquence du théorème des coefficients universels et du calcul
  suivant: pour $f\in\RR\CCC^{\infty}(\surg,\CC^*)$ et
  $c\in\hhz{\surg}$
  \[
  \begin{array}{c c l}
    ((c_{\sur})^*(\widetilde{\ind}(f)))(c) &=& (\widetilde{\ind}(f))((c_{\sur})_*(c))\\
    &=& \dd\frac{1}{2i\pi} \dd\int_{(c_{\sur})_*(c)}\frac{\d f}{f}\\
    &=& \dd\frac{1}{2i\pi} \dd\int_{c}\frac{\d (f\circ c_{\sur})}{f\circ c_{\sur}}\\
    &=& (\widetilde{\ind}(\bar{f}))(c)\\
    &=& -(\widetilde{\ind}(f))(c).
  \end{array}
  \]
  Puis, comme $\hcz{\surg}_{-1}$ est discret, deux éléments de $\RR \CCC^{\infty}(\surg,\CC^*)$ qui sont homotopes ont même indice, et $\widetilde{\ind}$ passe bien au quotient.
\end{proof}

\begin{Exple}\label{exind}
  Calculons les indices pour la famille $\mathcal{B}$; l'exposant
  $\pd$ indique que nous prenons le dual de Poincaré de la classe de
  (co)homologie considérée. Nous avons les cas suivant:
  \begin{itemize}
  \item[si $(\surg,c_\sur)$ n'est pas séparante:] $\ind(f_i) = (a_i)^{\pd}$ pour
    $i = 1,\ldots,g$.
  \item[si $(\surg,c_\sur)$ est séparante:] $\ind(f_i) = (a_i)^{\pd}$ pour
    $i = 1,\ldots,k-1$, $\ind(f_i) = (a_i+a_{i+m})^{\pd}$ et
    $\ind(g_i) = (b_i-b_{i+m})^{\pd}$ pour $i = k,\ldots,k+m-1$.
  \end{itemize}
\end{Exple}

  Posons ensuite
  \[
  \RR\CCC^{\infty}(\surg,\CC^*)_0 :=
  \{f\in\RR\CCC^{\infty}(\surg,\CC^*)\ |\ \exists g:\surg\rightarrow
  \CC,\ZZ/2\ZZ\text{-équivariante},\ f = \exp(g)\}.
  \]
  C'est donc l'ensemble des fonctions sur $\surg$ qui admettent un
  logarithme $\ZZ/2\ZZ$-équivariant.

  \begin{Lemme}\label{compo} 
  $\RR\CCC^{\infty}(\surg,\CC^*)_0$ est la composante
  connexe de la fonction constante égale à $1$ dans
  $\RR\CCC^{\infty}(\surg,\CC^*)$. Ainsi, on a
  \[
  \modsig =
  \RR\CCC^{\infty}(\surg,\CC^*)/\RR\CCC^{\infty}(\surg,\CC^*)_0.
  \]
\end{Lemme}

\begin{proof}
  \begin{itemize}
  \item $\RR\CCC^{\infty}(\surg,\CC^*)_0$ est ouvert : si
    $f\in\RR\CCC^{\infty}(\surg,\CC^*)$ et
    $g\in\RR\CCC^{\infty}(\surg,\CC)$ vérifient
    \[
    \left| f-\exp(g)\right| < \frac{1}{2} \inf \left| \exp(g)\right|
    \]
    alors
    \[
    \left| \frac{f}{\exp(g)} - 1 \right| < \frac{1}{2}
    \]
    donc il existe $h\in\RR\CCC^{\infty}(\surg,\CC)$ avec
    \[
    \frac{f}{\exp(g)} = \exp(h).
    \]
    De plus, $\exp(h_{\rvert\RR\sur})\in\RR_+^*$, et donc $h\circ
    c_{\sur} - \bar{h} = 4i\pi k$, pour un $k\in\ZZ$. Ainsi $f =
    \exp(h-2i\pi k + g) \in \RR\CCC^{\infty}(\surg,\CC^*)_0$.
  \item $\RR\CCC^{\infty}(\surg,\CC^*)_0$ est fermé car
    \[
    \RR\CCC^{\infty}(\surg,\CC^*)\setminus\RR\CCC^{\infty}(\surg,\CC^*)_0 =
    \dd\bigsqcup_{f\in
      \RR\CCC^{\infty}(\surg,\CC^*)\setminus\RR\CCC^{\infty}(\surg,\CC^*)_0}
    f\RR\CCC^{\infty}(\surg,\CC^*)_0.
    \]
  \item Enfin, $1\in \RR\CCC^{\infty}(\surg,\CC^*)_0$.
  \end{itemize}
\end{proof}

\begin{Lemme}\label{lemmodsig}
  La suite suivante est exacte :

\begin{equation}\label{suitemodsig}
  0\rightarrow \ZZ/2\ZZ \xrightarrow{-1} \modsig\xrightarrow{\ind}\hcz{\surg}_{-1}\rightarrow 0.
  \tag{$*$}
\end{equation}

De plus, la famille $\mathcal{B}$ engendre $\modsig$.
\end{Lemme}

\begin{proof}
  \begin{itemize}
  \item La surjectivité du morphisme $\ind$ provient directement de
    l'Exemple \ref{exind}: l'image de $\mathcal{B}$ par $\ind$
    engendre $\hcz{\surg}_{-1}$.
  \item Si $f$ est dans le noyau de $\ind$, alors en particulier elle a le même signe sur chaque composante de $\RR\surg$. Quitte à la multiplier par $-1$, nous pouvons supposer que $f$ est positive sur $\RR\surg$. Montrons que la fonction $f$ admet un logarithme $\ZZ/2\ZZ$-équivariant. Notons pour cela $\pi:\widetilde{\surg}\rightarrow \surg$ le revêtement universel de $\surg$. Comme $\widetilde{\surg}$ est simplement connexe, il existe $G : \widetilde{\surg}\rightarrow \CC$ telle que le diagramme suivant commute
\[
\xymatrix{
\widetilde{\surg} \ar@{.>}[r]^G \ar[d]_\pi \ar[rd] & \CC \ar[d]^\exp \\
\surg \ar[r]_f & \CC^*.  
}
\]
De plus, si $z,z'\in\widetilde{\surg}$ vérifient $\pi(z) = \pi(z')$, alors on a $G(z) = G(z')$. En effet, prenons un chemin continu $\gamma : [0,1]\rightarrow \widetilde{\surg}$ reliant $z$ à $z'$. On obtient un lacet $\pi\circ\gamma\in\pi_1(\surg,\pi(z))$ dont l'image par $f$ est un lacet contractible dans $\CC^*$ car $f$ est d'indice nul. Par définition, $G(z)$ et $G(z')$ ne diffèrent que par l'action de $f\circ\pi\circ\gamma\in\pi_1(\CC^*)$ qui est nul. Ainsi $G(z) = G(z')$ et on en déduit qu'il existe $g\in \CCC^{\infty}(\surg,\CC)$ telle que $\exp(g) = f$. De plus, comme $f$ est $\ZZ/2\ZZ$-équivariante et positive sur $\RR\surg$, il existe $k\in\ZZ$ tel que $g\circ c_\sur - \overline{g} = 4i\pi k$. Ainsi $g-2i\pi k$ donne un logarithme $\ZZ/2\ZZ$-équivariant de $f$.

D'après le Lemme \ref{compo}, la classe de $f$ dans $\modsig$ est donc celle de la fonction constante égale à $1$.

Le noyau de $\ind$ est donc constitué des deux éléments $1$ (qui admet un logarithme $\ZZ/2\ZZ$-équivariant) et $-1$ (qui admet un logarithme qui n'est pas $\ZZ/2\ZZ$-équivariant).
  \end{itemize}
\end{proof}

\begin{Rem}\label{remprod}
  La démonstration du Lemme \ref{lemmodsig} montre même que la suite
  (\ref{suitemodsig}) est scindée. En fait nous avons un isomorphisme
  non canonique
  \[
  \modsig \cong \ZZ/2\ZZ \times \modsig_{+a_0} = \ZZ/2\ZZ \times
  \hcz{\surg}_{-1}
  \]
  où $\modsig_{+a_0}$ est le sous groupe de $\modsig$ composé des
  fonctions positives sur la composante $(\RR\surg)_0$. Si on place le
  point $p\in \RR\surg$ sur une autre composante $(\RR\surg)_i$, nous
  obtenons l'isomorphisme
  \[
  \begin{array}{c c c}
    \ZZ/2\ZZ\times \hcz{\surg}_{-1} & \rightarrow & \ZZ/2\ZZ \times \hcz{\surg}_{-1} \\
    (\epsilon, \alpha)             & \mapsto     & (\epsilon + \alpha(b_{i}), \alpha).
  \end{array}
  \]  
\end{Rem}

\subsubsection{Structures $Spin$ réelles sur
  $(\surg,c_\sur)$}\label{spin}

Nous rappelons quelques faits à propos des structures $Spin$ sur
$\surg$ (voir \cite{atiyah}, \cite{john}, \cite{Lawson} et
\cite{Natanzon} par exemple) Nous supposerons que
$(\Sigma_g,c_{\Sigma})$ est munie d'une métrique riemannienne réelle,
que son genre $g$ est au moins $1$, et que $\RR \Sigma_g \neq
\emptyset$. Le choix de la métrique n'influe pas sur les résultats que
nous allons énoncer, pour la même raison mentionnée dans la Remarque
\ref{pinmetrique}.

Notons $R^+_{\sur}$ le fibré principal des repères orthonormés directs
du fibré tangent à $\surg$. Ce fibré a pour groupe $SO_2(\RR)$;
celui-ci admet un revêtement double non trivial $Spin(2)$ (voir
\cite{atibott}). Une structure $Spin$ sur $\Sigma_g$ est la donnée
d'une classe d'isomorphisme de paires composées d'un fibré principal
$\pi : P_{Spin}\rightarrow \Sigma_g$ de groupe $Spin(2)$ et d'un
morphisme de fibrés $P_{Spin}\rightarrow R^+_\sur$ équivariant pour
les actions de $SO_2(\RR)$ et $Spin(2)$.  L'obstruction à l'existence
d'une telle structure $Spin$ est donnée par la seconde classe de
Stiefel-Whitney de la surface. Or comme celle-ci est orientable, sa caractéristique d'Euler est paire. Nous
avons donc $w_2(T\surg) = 0$, et la surface admet une structure $Spin$.

Les structures $Spin$ sur $\Sigma_g$ forment un ensemble
$Spin(\Sigma_g)$ qui est en bijection avec les éléments de
$H^1(R^+_{\sur},\ZZ/2\ZZ)$ non triviaux dans les fibres. Plus
précisément, la suite spectrale de Leray-Serre associée à la fibration
$SO_2(\RR) \xhookrightarrow{i} R^+_{\sur} \xrightarrow{\pi} \Sigma_g$ donne:
\[
0 \rightarrow H^1(\Sigma_g,\ZZ/2\ZZ) \xrightarrow{\pi^*}
H^1(R^+_{\sur},\ZZ/2\ZZ) \xrightarrow{i^*} H^1(SO_2(\RR),\ZZ/2\ZZ)
\rightarrow 0
\label{leray}\tag{$*$}
\]
et $Spin(\Sigma_g) = \{\zeta\in H^1(R^+_{\sur},\ZZ/2\ZZ)\ |\
i^*(\zeta)\neq 0\}$. C'est un espace affine sur
$H^1(\Sigma_g,\ZZ/2\ZZ)$.

On note $z \in H_1(R^+_{\sur},\ZZ/2\ZZ)$ la classe de la
fibre. Johnson dans \cite{john} affirme alors que pour tout $a\in
\hhzdeux{\Sigma_g}$, on a un relevé canoniquement associé
$\tilde{a}\in\hhzdeux{R^+_{\sur}}$. Si $a$ s'écrit comme une somme de courbes simples $a_i$, $1\leq i\leq m$, on note alors 
$\overrightarrow{a_i}$ les éléments de $\hhzdeux{R^+_{\sur}}$ induits par les champs de vecteurs tangents unitaires de ces courbes, et on pose $\tilde{a} = \dd\sum_{i=1}^m\overrightarrow{a_i} + mz$. Toutefois, ce relevé ne scinde pas la
suite duale à (\ref{leray}). En effet, si on note $\bullet$ le produit
d'intersection sur $\hhzdeux{\surg}$, nous avons
\[
\widetilde{a + b} = \tilde{a} + \tilde{b} + (a\bullet b)z.
\]
Cette dernière égalité nous fournit une identification entre
structures $Spin$ sur $\Sigma_g$ et formes quadratiques sur
$\hhzdeux{\Sigma_g}$ en associant à $\zeta \in Spin(\Sigma_g)$
l'application
\[
\begin{array}{c c c c}
  q_{\zeta} : & \hhzdeux{\Sigma_g} & \rightarrow & \ZZ/2\ZZ \\
  & a                  & \mapsto     & \langle \zeta,\tilde{a} \rangle\\
\end{array}
\]
où le crochet de dualité est donné par $\hczdeux{R^+_{\sur}} =
Hom(\hhzdeux{R^+_{\sur}},\ZZ/2\ZZ)$ (coefficients universels).

D'autre part, comme remarqué par Atiyah \cite{atiyah}, étant fixée une
structure complexe sur $\Sigma_g$, ses structures $Spin$ sont aussi en
correspondance avec l'ensemble $S(\Sigma_g)$ des racines carrées holomorphes du
fibré canonique $K_{\Sigma}$. Si $\zeta$ est un élément de
$Spin(\Sigma_g)$, on notera $L_{\zeta}$ l'élément de
$S(\Sigma_g)$ associé de la façon suivante. La structure $\zeta$ vue comme élément de $H^1(R_{\sur}^+,\ZZ/2\ZZ)$ correspond à un revêtement double (à isomorphisme près) du $U_1 = SO_2(\RR)$ fibré principal $R_{\sur}^+$ dont la restriction à la fibre est $z\in U_1\mapsto z^2\in U_1$, et donc à un fibré en droites complexes $L_{\zeta}$ et un isomorphisme $\alpha : L_{\zeta}^2\rightarrow K_{\sur}$ (à isomorphisme près). Le fibré $L_{\zeta}$ hérite alors d'une structure holomorphe rendant $\alpha$ holomorphe. L'ensemble $S(\Sigma_g)$ admet de plus une forme quadratique
naturelle (voir \cite{atiyah} ou \cite{harris}).
\begin{Theoreme}[Relation de Riemann-Mumford]
L'application 
\[
\begin{array}{c c c c}
  \varphi: & S(\Sigma_g) & \rightarrow & \ZZ/2\ZZ \\
  & L           & \mapsto     & \dim H^0(L) \mod 2
\end{array}
\]
est une forme quadratique associée au produit cup sur $\hczdeux{\Sigma_g}$. \qed
\end{Theoreme}
Lorsqu'on \og
linéarise\fg cette forme en $L_\zeta$, on obtient
\[
\begin{array}{c}
  \varphi_{L_\zeta}(a) = q_{\zeta}(a),\ \forall a\in \hhzdeux{\Sigma_g}\\
  \text{où } \varphi_{L_\zeta} (a) = \varphi(L_{\zeta} + a^{\pd}) - \varphi(L_{\zeta}).
\end{array}
\]
On identifiera dans la suite
\[
\begin{array}{c c l}
  Spin(\Sigma_g) & = & \{ q: \hhzdeux{\Sigma_g} \rightarrow \ZZ/2\ZZ,\text{ quadratique}\}\\
  & = & \{ L\in S(\Sigma_g)\}.
\end{array}
\]
Nous allons maintenant utiliser la structure réelle sur $\surg$. Nous
noterons $(c_{\sur})_*$ et $(c_{\sur})^*$ les automorphismes de
$\hhzdeux{\surg}$ et $\hczdeux{\surg}$ induits par
$c_{\sur}$. L'action de $c_{\sur}$ sur $S(\surg)$ est induite par celle de $c_\sur$ sur les diviseurs. 

Notons $J$ la structure complexe fixée sur $\surg$, et supposons quitte à changer la métrique que $J$ est orthogonale pour la métrique fixée sur $\surg$. Posons $\overline{\surg}$ la surface $\surg$ munie de l'orientation opposée et de la structure complexe $-J$. Considérons le morphisme
    \[
    \begin{array}{c c c c}
      R : & R^+_\sur        & \rightarrow & R^+_{\bar{\sur}} \\
      & (z, (v,Jv)) & \mapsto     & (z, (v,-Jv)).
    \end{array}
    \]
    Nous obtenons alors une involution
    \[
    (\d c_\sur \circ R)^* : Spin(\surg) \rightarrow Spin(\surg).
    \]
    
\begin{Definition}
  L'ensemble des structures $Spin$ réelles sur $(\surg,c_\sur)$ est
  l'ensemble $\rspin(\surg) = \Fix((\d c_\sur \circ R)^*)$.
\end{Definition}

Nous rappelons le lemme suivant (voir par exemple
\cite{degitkha}).

\begin{Lemme}\label{lemspinre}
Supposons que $\RR\surg\neq\emptyset$, et soit $\zeta \in Spin(\surg)$. Alors les trois conditions suivantes sont équivalentes:
\begin{enumerate}[(i)]
\item\label{petitun} $\zeta\in\rspin(\surg)$,
\item\label{petitdeux} $(c_\sur)^*q_\zeta = q_\zeta$,
\item\label{petittrois} $(c_\sur)^*L_\zeta = L_\zeta$.
\end{enumerate}\qed
\end{Lemme}

\begin{Rem}
Lorsque $\RR\surg\neq\emptyset$, la condition $(c_\sur)^*L = L$ pour un $L\in S(\surg)$ équivaut
     à l'existence d'une structure réelle anti-holomorphe relevant
    $c_\sur$ sur $L$ (voir par exemple \cite{wel2}). Une telle structure n'est
    toutefois pas unique: deux relevés diffèrent d'un facteur complexe
    constant.
\end{Rem}

\begin{Lemme}
  L'ensemble $\rspin(\surg)$ est un espace affine de direction $F =
  \Fix((c_{\sur})^*) \subset \hczdeux{\surg}$.
\end{Lemme}
\begin{proof}
  En effet, si $q$ et $q'$ sont deux éléments de $\rspin(\surg)$,
  alors $q-q'$ est un élément de $\hczdeux{\surg}$ et $(q-q')\circ
  (c_\sur)_* = q-q'$. Réciproquement, si $q\in\RR Spin(\surg)$ et $\alpha\in F$, alors $q+\alpha$ est un élément de $Spin(\surg)$ qui vérifie de plus $(c_\sur)^*(q+\alpha) = q + \alpha$. Donc $q+\alpha\in\RR Spin(\surg)$.
\end{proof}

\begin{Rem}\label{fmoins}
L'isomorphisme de dualité de Poincaré
\[
\begin{array}{c c c}
  \hhzdeux{\surg} & \rightarrow & \hczdeux{\surg}\\
   a              & \mapsto     & a\bullet
\end{array}
\]
commute avec $(c_\sur)_*$ et $(c_\sur)^*$, de sorte que $F$ est le dual de Poincaré de $\Fix((c_\sur)_*)\subset\hhzdeux{\surg}$. De plus, si l'on prend une base symplectique réelle $(a_i,b_i)$ de $\hhz{\surg}$, elle induit une base de $\hhzdeux{\surg}$, et $\Fix((c_\sur)_*)$ est le sous-espace engendré par $a_1,\ldots,a_g,b_1,\ldots,b_{k-1}$ lorsque la courbe n'est pas séparante, et par $a_1,\ldots,a_{k-1},a_k + a_{k+m},\ldots,a_{k+m-1} + a_g,b_1,\ldots,b_{k-1},b_k + b_{k+m},\ldots,b_{k+m-1} + b_g$ si la courbe est séparante. Dans les deux cas, nous notons que le $\ZZ/2\ZZ$-espace vectoriel $F$ est de dimension $g + k - 1$ et qu'il contient la réduction modulo $2$ $F^-$ de $\hcz{\surg}_{-1}$ comme sous-espace de dimension $g$.
\end{Rem}

\subsubsection{Action de $\modsig$ sur
  $\rspin(\surg)$}\label{actmodspin}

Revenons maintenant aux automorphismes réels sur les fibrés en droites
complexes munis d'une structure réelle. Notons $F_+$ la réduction modulo $2$ de $\hhz{\surg}_{+1}$, de sorte que $F_+$ et $F^-$ sont duaux de Poincaré. La
réduction modulo $2$ du morphisme d'indice
\[
\begin{array}{c c c c}
  \ind_2: & \modsig & \rightarrow & \hczdeux{\surg} = Hom(\hhzdeux{\surg}, \hhzdeux{\CC^*})\\
  & f       & \mapsto     & f_*
\end{array}
\]
a pour image $F^-$ d'après le Lemme \ref{lemmodsig} (voir la Remarque \ref{fmoins}).

\begin{Rem}\label{calculind}
  On peut même expliciter ce morphisme à l'aide d'une base $(d_i)$ de
  $F_+$ que l'on complète en une base $(d_i,e_i)$
  symplectique de $\hhzdeux{\surg}$:
  \[
  (f_*)^{\pd} = \dd\sum_{i} f_*(e_i) d_i.
  \]
\end{Rem}

D'autre part, les éléments de $\modsig$ s'identifiant canoniquement
aux classes d'homotopie des automorphismes réels du fibré tangent à
$\surg$, ils agissent par tiré en arrière sur les structures $Spin$
réelles sur $\surg$.

\begin{Lemme}\label{actiontrans}
  L'action de $\modsig$ sur $\rspin(\surg)$ s'identifie à l'action par
  translation de $F^-$ sur ce même espace grâce au morphisme $\ind_2$.
\end{Lemme}
\begin{proof}
Prenons un élément $f$ de $\modsig$, un automorphisme $\Phi_f\in\raut{T\surg}$ égal à $f$ dans $\modsig$ et une structure $Spin$ $q_\zeta$ sur $\surg$. Soit $a\in\hhzdeux{\surg}$ une courbe simple sur $\surg$. Nous avons par définition
\[
\begin{array}{c c l}
f.q_\zeta(a) & = & \langle (\Phi_f)^*\zeta , \tilde{a}\rangle\\
            & = & \langle \zeta , (\Phi_f)_* (\overrightarrow{a} + z)\rangle\\
\end{array}
\]
Or, l'action de $(\Phi_f)_*$ sur la classe de la fibre $z$ est triviale, et en trivialisant $T\surg$ le long de $a$, on voit que l'action de $(\Phi_f)_*$ sur le champ de vecteur tangent à $a$ est donnée par l'indice de $f$ le long de $a$. Plus précisément, $(\Phi_f)_*(\overrightarrow{a}) = \overrightarrow{a} + f_*(a)z$. Ainsi
  \[
\begin{array}{c c l}
  f.q_\zeta(a) & = & \langle \zeta, \overrightarrow{a} + f_*(a)z + z\rangle\\
              & = & q_\zeta(a) + f_*(a)
\end{array}
  \]
car $\langle \zeta,z\rangle = 1$.
\end{proof}

Supposons que $\RR\surg\neq\emptyset$ et notons 
\[\hczdeux[g+1]{\RR\surg} = \{w\in\hczdeux{\RR\surg}\ |\ w([\RR\surg]) = g+1 \mod 2\},
\]
et pour tout $w\in \hczdeux[g+1]{\RR\surg}$,
\[
    \rspin(\surg,w)  := \{ L\in \rspin(\surg)\ |\ w_1(\RR L) = w\}
    \]
On en déduit la partition
  \[\label{pa}
  \rspin(\surg) =
  \dd\bigsqcup_{w\in\hczdeux[g+1]{\RR\surg}} \rspin(\surg,w).\tag{$*$}
  \]

\begin{Proposition}\label{partition}
Supposons que $\RR\surg\neq\emptyset$. La partition (\ref{pa}) est la réunion des orbites de l'action de $\modsig$ sur $\rspin(\surg)$. En particulier, ce sont des espaces affines sur $F^-$. De plus, chaque orbite se réécrit
\[
 \rspin(\surg,w) = \{ q\in \rspin(\surg)\ |\ q_{|\hhzdeux{\RR\surg}} = \mathds{1} - w\}.
\]
\end{Proposition}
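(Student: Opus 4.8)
The plan is to regard $\rspin(\surg)$ as an affine space over $F$ in the usual way --- the affine structure coming from tensoring a real square root $L_\zeta$ of $K_\surg$ by a real $2$-torsion holomorphic line bundle, the latter classified by an element of $F\subset\hczdeux{\surg}$ --- and then to compare the two maps $\zeta\mapsto w_1(\RR L_\zeta)$ and $\zeta\mapsto(q_\zeta)_{|\hhzdeux{\RR\surg}}$ from $\rspin(\surg)$ to $\hczdeux{\RR\surg}$. Note that $(q_\zeta)_{|\hhzdeux{\RR\surg}}$ really is an element of $\hczdeux{\RR\surg}=\Hom(\hhzdeux{\RR\surg},\ZZ/2\ZZ)$, i.e. $q_\zeta$ is $\ZZ/2\ZZ$-linear on the image of $\hhzdeux{\RR\surg}$ in $\hhzdeux{\surg}$, since the components of $\RR\surg$ are pairwise disjoint with trivial normal bundle, so the intersection form of $\surg$ vanishes there. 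Both maps are affine over the restriction homomorphism $r\colon\hczdeux{\surg}\to\hczdeux{\RR\surg}$ restricted to $F$: the first because tensoring $\RR L_\zeta$ by the real flat line bundle of class $\beta\in F$ twists it by the real line bundle of class $r(\beta)$ over $\RR\surg$, the second because translating $q_\zeta$ by $\alpha\in F$ translates $(q_\zeta)_{|\hhzdeux{\RR\surg}}$ by $r(\alpha)$.

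First I would check that (\ref{pa}) is a genuine partition: for any real holomorphic line bundle $\Lambda$ on $\surg$ one has $w_1(\RR\Lambda)([\RR\surg])\equiv\deg\Lambda\pmod 2$ (both sides equal, mod $2$, the real part of a real divisor representing $\Lambda$), so $w_1(\RR L_\zeta)\in\hczdeux[g+1]{\RR\surg}$ as $\deg L_\zeta=g-1$. Thus (\ref{pa}) is the partition of $\rspin(\surg)$ into the fibres of $\zeta\mapsto w_1(\RR L_\zeta)$, i.e. into the cosets of $\ker(r|_F)$ in the affine space $\rspin(\surg)$.

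The crux is then the identity $\ker(r|_F)=F^-$, because by Lemme \ref{actiontrans} and the surjectivity of $\ind_2$ onto $F^-$ the $\modsig$-orbits on $\rspin(\surg)$ are exactly the cosets of $F^-\subseteq F$. The inclusion $F^-\subseteq\ker(r|_F)$ is immediate: an integral class $\gamma\in\hcz{\surg}$ with $(c_\sur)^*\gamma=-\gamma$ restricts to $\RR\surg$ to a class annihilated by $2$, hence to $0$ since $\hcz{\RR\surg}$ is torsion free, and $F^-$ is the mod $2$ reduction of such classes. For the reverse inclusion, since $\dim_{\ZZ/2\ZZ}F^-=g=\dim_{\ZZ/2\ZZ}F-(k-1)$ by Remarque \ref{fmoins}, it suffices to show that $r|_F$ has image of dimension at least $k-1$; taking a real symplectic basis $(a_i,b_i)$ as in \S\ref{modsig} with $[\RR\surg]_i=a_i$ for $1\le i\le k-1$, the Poincaré duals $b_i^{\pd}$ ($1\le i\le k-1$) lie in $F$ (Remarque \ref{fmoins}), and $r(b_i^{\pd})$ sends $[\RR\surg]_j$ to $b_i\bullet a_j\equiv\delta_{ij}$ for $j\ge1$ and $[\RR\surg]_0$ to $1$ (as $[\RR\surg]_0$ is homologous to a sum of $a_l$'s containing $a_i$), and these $k-1$ functionals on $\hhzdeux{\RR\surg}$ are independent. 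Hence $\ker(r|_F)=F^-$, so the pieces of (\ref{pa}) coincide with the $\modsig$-orbits --- each therefore an affine space over $F^-$ --- and, there being $[F:F^-]=2^{k-1}=\card\hczdeux[g+1]{\RR\surg}$ of them (the latter an affine hyperplane in $\hczdeux{\RR\surg}\cong(\ZZ/2\ZZ)^k$), the map $\zeta\mapsto w_1(\RR L_\zeta)$ is onto $\hczdeux[g+1]{\RR\surg}$, so every $\rspin(\surg,w)$ is non-empty.

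Finally I would identify $\rspin(\surg,w)$ with $\{q\in\rspin(\surg)\mid q_{|\hhzdeux{\RR\surg}}=\mathds{1}-w\}$. Since both maps of the first paragraph are affine over $r|_F$, their sum $(q_\zeta)_{|\hhzdeux{\RR\surg}}+w_1(\RR L_\zeta)$ is a constant $c\in\hczdeux{\RR\surg}$, which one computes by evaluating on each $[\RR\surg]_i$: Johnson's lift $\widetilde{[\RR\surg]_i}=\overrightarrow{[\RR\surg]_i}+z$ and $\langle\zeta,z\rangle=1$ give $q_\zeta([\RR\surg]_i)=\langle\zeta,\overrightarrow{[\RR\surg]_i}\rangle+1$, and $\langle\zeta,\overrightarrow{[\RR\surg]_i}\rangle$ is, by the definition of the correspondence $\zeta\leftrightarrow L_\zeta$, the obstruction to the triviality of $\RR L_\zeta$ over $(\RR\surg)_i$ once $\RR K_\surg|_{(\RR\surg)_i}$ has been trivialised by the unit tangent framing of the oval, i.e. $\langle\zeta,\overrightarrow{[\RR\surg]_i}\rangle=w_1(\RR L_\zeta)([\RR\surg]_i)$. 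Therefore $c=\mathds{1}$ and the reformulation follows. The single delicate point in all this is that last identification --- the relation between the quadratic form $q_\zeta$ of a real theta characteristic and $w_1$ of its real part, which may alternatively be quoted from \cite{degitkha} --- everything else being bookkeeping, the one subtlety to watch being that Poincaré duality anticommutes with $(c_\sur)_*$ and $(c_\sur)^*$, since $c_\sur$ reverses the orientation of $\surg$.
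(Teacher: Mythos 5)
Your proof is correct, and it differs from the paper's in one substantive place. The paper argues in three steps: first the pointwise identity $q_{\zeta|\hhzdeux{\RR\surg}}=\mathds{1}-w_1(\RR L_\zeta)$, obtained from the lifting criterion on $R^+_\sur$ and quoted from Natanzon — this is exactly the content of your last paragraph; then invariance of each $\rspin(\surg,w)$ under $\modsig$, because $f_*\in F^-$ vanishes on the classes $[\RR\surg]_i$; and finally transitivity via Natanzon's Lemme \ref{tr}, which forces two forms with the same Stiefel-Whitney class to agree on the whole Lagrangian $F_+$ and hence to differ by an element of $F^-$, the annihilator of $F_+$. You replace this transitivity step by the equality $\ker(r|_F)=F^-$, proved by a dimension count using the $k-1$ independent functionals $r(b_i^{\pd})$; this bypasses Lemme \ref{tr} entirely and, by matching the $2^{k-1}$ cosets of $F^-$ in $F$ with the $2^{k-1}$ elements of $\hczdeux[g+1]{\RR\surg}$, also delivers the non-emptiness of every $\rspin(\surg,w)$, a point the paper leaves implicit. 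The one fragile spot is your opening assertion that $\zeta\mapsto w_1(\RR L_\zeta)$ is affine over $r|_F$ via tensoring by real $2$-torsion bundles: as stated it presupposes both the compatibility of the tensor-product affine structure on $\rspin(\surg)$ with the quadratic-form one used in the paper, and the identity $w_1(\RR T)=r(\beta)$ for the real $2$-torsion bundle $T$ of class $\beta$ — true, but not free, and you use the affineness in your second paragraph before it is fully justified. Since that affineness is an immediate consequence of the identity you prove at the end, the argument closes; it would be cleaner to establish the identity first and drop the tensor-product justification.
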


Avant de passer à la démonstration, donnons le Lemme \ref{tr} que l'on peut trouver chez Natanzon (Lemma 3.2 p.72 de \cite{Natanzon}).

\begin{Lemme}\label{tr}
  Soit $q\in\rspin(\surg)$ et $a$ une courbe simple connexe
  sur $\surg$. Supposons que $\RR\surg\neq\emptyset$.
\begin{enumerate} 
\item Si $c_\sur(a) = a$ et $a\cap\RR\surg = \emptyset$ (c'est-à-dire que $a$ est une courbe globalement stable par $c_\sur$) alors $q(a) = 1$.
\item Si $c_\sur(a)\cap a = \emptyset$ alors $q(a + c_\sur(a)) = 0$.
\end{enumerate}\qed
\end{Lemme}

\begin{proof}[Démonstration de la Proposition \ref{partition}]
  Commençons par démontrer que pour $w\in \hczdeux[g+1]{\RR\surg}$ nous
  avons l'égalité
  \[
  \{ L\in \rspin(\surg)\ |\ w_1(\RR L) = w\} = \{ q\in \rspin(\surg)\
  |\ q_{|\hhzdeux{\RR\surg}} = \mathds{1} - w\}.
  \]
  Prenons pour cela $\zeta\in\rspin(\surg)$. Par définition
  \[
  q_\zeta([\RR\surg]_i) = \langle\zeta,\widetilde{[\RR\surg]_i}
  \rangle,
  \]
  ce qui signifie que $q_\zeta([\RR\surg]_i)$ vaut $1$ si et seulement
  si le lacet $\widetilde{[\RR\surg]_i}$ dans $R^+_\sur$ ne se relève pas
  dans la structure $Spin$ donnée par $\zeta$. Or, ceci est équivalent
  à l'existence d'une section de $(\RR
  L_\zeta)_i$ ne s'annulant pas (voir \cite{Natanzon}). Donc
  \[
  q_{\zeta|\hhzdeux{\RR\surg}} = \mathds{1} - w_1(\RR L).
  \]

  D'autre part, si $f$ est un élément de $\modsig$ et
  $q\in\rspin(\surg,w)$, alors grâce au Lemme \ref{actiontrans}
  \[
  (f.q)_{|\hhzdeux{\RR\surg}} = q_{|\hhzdeux{\RR\surg}} +
  f_{*|\hhzdeux{\RR\surg}} = \mathds{1} - w
  \]
  car $f_*$ est un élément de $F^-$. Donc l'action de $\modsig$
  préserve chacun des ensembles $\rspin(\surg,w)$.

  Puis, si $q,q'\in\rspin(\surg,w)$, le Lemme \ref{tr} nous assure que $(q-q')_{|F_+} = 0$ et donc que $q-q'$ est un
  élément de $F^-$. Ainsi l'action de
  $\modsig$ est transitive sur $\rspin(\surg,w)$.
\end{proof}

\begin{Definition}
  Si $\zeta$ est un élément de $\rspin(\surg,w)$, nous dirons que
  $w\in \hczdeux{\RR\surg}$ est la première classe de Stiefel-Whitney
  de la structure $Spin$ $\zeta$, et nous écrirons $w_{\zeta} = w$.
\end{Definition}

Rappelons d'autre part (voir \cite{john} par exemple) qu'étant donnée une structure $Spin$ $q$ sur $\surg$, on définit son invariant d'Arf:
\[
\arf(q) = \dd\sum_iq(a_i)q(b_i) \in \ZZ/2\ZZ
\]
pour $(a_i,b_i)$ une base symplectique de $\hhzdeux{\surg}$. Cette
expression ne dépend pas de la base symplectique choisie et détermine
la classe de bordisme de $(\surg,q)$, élément de $\Omega_2^{Spin}\cong
\ZZ/2\ZZ$ (voir \cite{kirby}).

\begin{Lemme}\label{actarf}
  Pour $f\in\modsig$ et $q\in\rspin(\surg)$,
  \[
  \arf(f.q) = \arf(q) + q((f_*)^{\pd}).
  \]
  En particulier, si $q'$ est une structure $Spin$ réelle de même
  classe de Stiefel-Whitney que $q$, alors,
  \[
  \arf(f.q)-\arf(q) = \arf(f.q') - \arf(q').
  \]
\end{Lemme}
\begin{proof}
  Prenons une base symplectique $(a_i,b_i)$ de $\hhzdeux{\surg}$ comme
  dans la Remarque \ref{calculind}. Alors
  \[
  \begin{array}{c l}
    \arf(f.q) & = \dd\sum_i(q(a_i) + f_*(a_i))(q(b_i) + f_*(b_i)) \\
    & = \dd\sum_iq(a_i)q(b_i) +\dd\sum_iq(a_i)f_*(b_i) + \dd\sum_iq(b_i)f_*(a_i) + \dd\sum_if_*(a_i)f_*(b_i).
  \end{array}
  \]
  Mais, comme $f_*$ est un élément de $F^-$ qui est le dual de
  Poincaré du lagrangien $F_+$,
  \[
  \arf(f.q) = \arf(q) + \dd\sum_iq(a_i)f_*(b_i)
  \]
  et $q$ étant linéaire sur ce même lagrangien, nous obtenons la
  première affirmation,
  \[
  \arf(f.q) = \arf(q) + q\left(\dd\sum_if_*(b_i)a_i\right).
  \]
  Puis, pour $q$, $q'$ et $f$ comme dans l'énoncé, il existe $g$ dans
  $\modsig$ tel que $g.q = q'$ (Lemme \ref{actiontrans} et Proposition \ref{partition}). Donc $q'((f_*)^{\pd}) = q((f_*)^{\pd})
  + g_*((f_*)^{\pd})$. Mais comme $(f_*)^{\pd}$ et $(g_*)^{\pd}$ sont
  tous les deux dans le lagrangien $F_+$, nous avons
  $g_*((f_*)^{\pd}) = 0$, ce qui prouve la seconde assertion.
\end{proof}

Le Lemme \ref{actarf} assure que l'application
\begin{multline*}
  \AA : (f,w)\in \modsig\times \hczdeux[g+1]{\RR\surg}  \\
    \mapsto q((f_*)^{\pd}) = \arf(f.q)-\arf(q) \in\ZZ/2\ZZ
\end{multline*}
 est bien définie indépendamment du choix de $q\in\rspin(\surg,w)$.
Cette application $\AA$ mesure l'action de $\modsig$ sur les
classes de bordisme des structures $Spin$ réelles de $\surg$.

\begin{Lemme}
   Pour tout $w$ dans $\hczdeux[g+1]{\RR\surg}$, l'application $\AA^w = \AA(.,w):\modsig\rightarrow\ZZ/2\ZZ$ est un morphisme.
\end{Lemme}
\begin{proof}
  Ce lemme résulte du fait que les formes quadratiques $q
  \in \rspin(\surg)$ sont linéaires en restriction au lagrangien
  $F_+$ qui est le dual de Poincaré de $F^-$.
\end{proof}

\begin{Definition}
  On dit que deux structures $Spin$ réelles $\zeta$ et $\zeta'$ sont
  bordantes si et seulement si elles ont même première classe de Stiefel-Whitney et si $(\surg,\zeta)$ et $(\surg,\zeta')$ sont bordantes.
\end{Definition}

Les lemmes précédents montrent que le groupe $\modsig$ agit sur les classes de
bordisme réelles sans changer leur première classe de Stiefel-Whitney et que cette action est donnée par l'application
$\AA$.

\subsection[Actions des automorphismes réels au-dessus de l'identité]{Action des automorphismes réels sur les orientations du fibré déterminant et structures $Spin$ réelles}\label{enonce}

\subsubsection{\'Enoncés}\label{subenon}

Nous pouvons maintenant rappeler l'énoncé suivant qui donne une interprétation topologique de l'action de $\raut{N}$ sur les orientations de $\Det$ lorsque $\deg(N) = g+1 \mod 2$.

\begin{Theoreme}\label{enoncea}
Soit $(N,c_N)$ un fibré vectoriel complexe de rang $1$ muni d'une structure réelle sur $(\surg,c_\sur)$, de partie réelle non vide. Soit $f \in \raut{N} = \RR \CCC^\infty(\surg,\CC^*)$. Si le degré de $N$ n'est pas de même parité que le genre de $\surg$, alors l'action de $f$ sur les orientations du fibré $\Det$ coïncide avec l'action de $f$ sur les classes de bordisme de structures $Spin$ réelles de $(\surg,c_\sur)$ de même première classe de Stiefel-Whitney que $\RR N$. Ainsi, $f$ préserve les orientations de $\Det$ si et seulement si $\AA(f,w_1(\RR N)) = 0$. 
\end{Theoreme}

\begin{Rem}\label{parite}
\begin{itemize}
\item Nous traitons le cas où $\deg(N) = g \mod 2$ dans le Théorème \ref{enoncesn} mais notons dès maintenant que l'énoncé précédent n'est plus vrai dans ce cas. En effet, la fonction constante égale à $-1$ préserve toujours les classes de bordisme de structures $Spin$ réelles de $\surg$, mais son action sur les orientations du fibré $\Det$ n'est triviale que lorsque $\deg(N)+1-g = 0\mod 2$. 

Nous pouvons formuler cette remarque de façon différente. Soit $\mathcal{N}$ une déformation réelle de fibrés en droites holomorphes sur $(\surg,c_\sur)$ au-dessus d'une base $(B,c_B)$. C'est-à-dire que pour $b$ dans $B$ (resp. dans $\RR B$), $\mathcal{N}_b$ est un fibré en droites holomorphe (resp. réel) sur $\surg$, et $\mathcal{N}$ est un fibré en droites holomorphe réel sur $B\times\surg$. Notons $\pi : B\times\surg\rightarrow B$. Alors le groupe de Picard réel $\RR\Pic (B)$ agit sur $\mathcal{N}$ par $L\in\RR\Pic (B)\mapsto \pi^*L\otimes\mathcal{N}$. Fibre à fibre, cette action ne change pas $\mathcal{N}_b$. Par contre la déformation globale n'est plus la même.

En prenant en particulier $B = \cp$, $L = \OO_{\cp}(1)$ et $\deg(\mathcal{N}_b) = g\mod 2$, cette action change l'orientabilité du fibré déterminant $\Det[\mathcal{N}]$ au-dessus de $\RR B = \rp$.
\item Dans le cas particulier où la courbe est de genre zéro, le Lemme \ref{lemmodsig} nous dit qu'il n'y a essentiellement que deux automorphismes de fibrés en droites holomorphes réels sur la sphère au-dessus de l'identité : l'identité et la multiplication par $-1$. Pour la sphère, nous n'avons donc pas besoin de structures $Spin$ réelles. Un automorphisme n'échangera les orientations du fibré déterminant que s'il est négatif sur l'équateur et si le degré du fibré est pair.
\end{itemize}
\end{Rem}

Comme le suggère la Remarque \ref{parite}, nous devons prendre en compte le signe des éléments de $\raut{N}$ sur la partie réelle de $\surg$ de façon plus précise pour obtenir un résultat plus général. C'est ce que nous faisons maintenant.

 Tout élément $f$ de $\modsig$ induit un morphisme de
calcul de signe
\[
\epsilon_f : H_0(\RR\surg,\ZZ/2\ZZ) \rightarrow \ZZ/2\ZZ
\]
de la façon suivante. Le quotient $\dd\frac{f}{|f|}$ restreint à chaque composante de $\RR \surg$ est un élément de $\{-1,1\}$. La fonction
\[
\frac{f}{|f|} : \RR\surg\rightarrow \ZZ/2\ZZ
\]
est donc localement constante. Elle définit donc un morphisme de $H_0(\RR\surg,\ZZ/2\ZZ)\rightarrow (\ZZ/2\ZZ)^k$ que l'on compose avec le morphisme d'augmentation $(\ZZ/2\ZZ)^k\rightarrow\ZZ/2\ZZ$ pour obtenir $\epsilon_f$.

\begin{Rem}\label{epsf}
  Si nous choisissons une base symplectique réelle $(a_i,b_i)$ de
  $\hhz{\surg}$, alors les morphismes $f_*\in F^-$ et $\epsilon_f$
  sont reliés par
  \[
  \epsilon_f([\RR\surg]_i^{\pd}) = f_*(b_i) + \epsilon_f([\RR\surg]_0^{\pd}).
  \]
\end{Rem}

Posons
\[
\begin{array}{c c c c}
  \beta_0(f): & \hczdeux{\RR\surg} & \rightarrow & \ZZ/2\ZZ\\
  & w                  & \mapsto     & \epsilon_f(\mathds{1}-w^{\pd}).
\end{array}
\]
Cette application mesure l'action de $f$ sur les orientations de la droite réelle
$\underset{w_1(\RR L)([\RR \surg]_i) = 0}{\bigotimes(\RR L)_{x_i}}$ pour tout $L\in
\rspin(\surg,w)$ et $x_i\in(\RR\surg)_i$.

\begin{Lemme}\label{affine}
  Pout tout $f$ élément de $\modsig$, les applications $\beta_0(f)$ et
  $\AA(f)$ sont deux formes affines de même direction.
\end{Lemme}
\begin{proof}
  Soient $w$ et $w'$ deux éléments de $\hczdeux[g+1]{\RR\surg}$,
  $q\in\rspin(\surg,w)$ et $q'\in\rspin(\surg,w')$. Prenons une base symplectique réelle $(a_i,b_i)$ de $\hhz{\surg}$. Elle induit une base $c_1 = a_1,\ldots, c_{k-1} =a_{k-1}, c_k = a_k + a_{k+m},\ldots,c_{k+m-1} = a_{k+m-1} + a_g, c_{k+m} = b_k - b_{k+m},\ldots,c_g = b_{k+m-1} - b_g$ (resp. $c_1 = a_1,\ldots,c_g = a_g$) de $\hhz{\surg}_{+1}$ lorsque la courbe est séparante (resp. non séparante). On complète cette dernière en une nouvelle base symplectique de $\hhz{\surg}$ en posant $d_1=b_1,\ldots,d_{k-1} = b_{k-1},d_k = b_k,\ldots,d_{k+m-1} = b_{k+m-1}, d_{k+m} = a_k,\ldots,d_g = a_{k+m-1}$ (resp. $d_1 = b_1,\ldots,d_g=b_g$). On a
\[
(f_*)^{\pd} = \dd\sum_{i = 1}^g f_*(d_i)c_i.
\]
D'après le Lemme \ref{tr}, nous avons $q(c_i) = q'(c_i)$ pour $i\geq k$. Ainsi,
  \[
  \begin{array}{c c l}
    \AA(f)(w) - \AA(f)(w') & = & (q-q')(\dd\sum_{i = 1}^g f_*(d_i)c_i) \\
                           & = & (q-q')(\dd\sum_{i = 1}^{k-1} f_*(b_i)[\RR\surg]_i)\\
                           & = & \dd\sum_{i=1}^{k-1}f_*(b_i)(w-w')([\RR\surg]_i)\\
                           & = & \dd\sum_{i=1}^{k-1}(\epsilon_f([\RR\surg]_i^{\pd})-\epsilon_f([\RR\surg]_0^{\pd}))(w-w')([\RR\surg]_i)
                         \end{array}
\]
d'après la Remarque \ref{epsf}. Puisque
  $\dd\sum_{i=1}^{k-1}(w-w')([\RR\surg]_i) =
  (w-w')([\RR\surg]_0)$, on a 
\[
\begin{array}{c c l}
    \AA(f)(w) - \AA(f)(w') & = & \epsilon_f\left(\dd\sum_{i=1}^{k-1}(w-w')([\RR\surg]_i)[\RR\surg]_i^{\pd}\right) \\
                           &   & - \epsilon_f\left(\left(\dd\sum_{i=1}^{k-1}(w-w')([\RR\surg]_i)\right)[\RR\surg]_0^{\pd}\right)\\
                           & = & \epsilon_f\left(\dd\sum_{i=0}^{k-1}(w-w')([\RR\surg]_i)[\RR\surg]_i^{\pd}\right)\\
                           & = & \epsilon_f(w-w')^{\pd}.
  \end{array}
  \]
  Ainsi
  \[
  \AA(f)(w) - \AA(f)(w') = \beta_0(f)(w) - \beta_0(f)(w').
  \]
\end{proof}

Autrement dit, un élément $f\in\modsig$ agit de la même façon sur les
classes de bordisme de structures $Spin$ réelles de première classe de
Stiefel-Whitney $w$ et sur les orientations de $\dd\bigotimes_{w_1(\RR
  L)([\RR \surg]_i) = 0} (\RR L)_{x_i}$ pour tout $L \in \rspin(\surg,w)$ et $x_i\in(\RR\surg)_i$, à
une constante (ne dépendant pas de $w$) près. C'est cette constante $s_{top}(f)$
qui nous intéresse.

\begin{Definition}
  \begin{itemize}
  \item Pour $f$ dans $\modsig$ et $w\in\hczdeux[g+1]{\RR\surg}$, on pose $s_{top}(f) := \beta_0(f)(w) +
    \AA(f,w) \in \ZZ/2\ZZ$.
  \item \'Etant donné $(N,c_N) \rightarrow (\surg,c_{\sur})$ un fibré
    en droites complexes muni d'une structure réelle, et
    $f\in\raut{N}$, on pose
    \[
    s_N(f) := s_{top}(f) + \beta_0(f)(w_1(\RR N)) \in\ZZ/2\ZZ.
    \]
  \end{itemize}
\end{Definition}

\begin{Exple}\label{sncalcul}
  Calculons la valeur de $s_{top}$ sur la famille $\mathcal{B}$. Fixons pour cela $q\in\rspin(\surg,w)$.
  \begin{itemize}
  \item Pour $i = 0,\ldots,k-1$, d'une part $\AA(f_i,w) = q([\RR\surg]_i) = 1-w([\RR\surg]_i)$, et d'autre part, comme $f_i$ n'est négative que sur la composante $(\RR\surg)_i$ de $\RR\surg$, $\beta_0(f)(w) = 1-w([\RR\surg]_i)$. Nous avons donc dans ce cas $s_{top}(f_i) = 0$.
  \item Si $(\surg,c_\sur)$ n'est pas séparante et $i \in\{k,\ldots,g\}$, d'une part $\AA(f_i,w) = q(a_i) = 1$ car $a_i$ est globalement stable (voir Lemme \ref{tr}), d'autre part $\beta_0(f)(w) = 0$ car $f_i$ est positive sur $\RR\surg$. Nous avons donc $s_{top}(f_i) = 1$ dans ce cas.
  \item Si $(\surg,c_\sur)$ est séparante et $i \in\{k,\ldots,k+m-1\}$, d'une part $\AA(f_i,w) = q(a_i+c_\sur(a_i)) = 0$ et $\AA(g_i,w) = q(b_i+c_\sur(b_i)) = 0$ d'après le Lemme \ref{lemspinre}, d'autre part $f_i$ est positive sur $\RR\surg$ donc $\beta_0(f_i) = 0$. Nous avons donc dans ce cas $s_{top}(f_i) = s_{top}(g_i) = 0$.
  \end{itemize}
\end{Exple}

\begin{Rem}\label{snrem}
  \begin{itemize}
\item Lorsque $\deg(N) = g+1\mod 2$, nous pouvons prendre $w_1(\RR N)\in\hczdeux[g+1]{\RR\surg}$ pour calculer $s_{top}$. Nous avons alors dans ce cas $s_N = \AA^{w_1(\RR N)}$.
 \item Comme $-1 = \dd\prod_{i=0}^{k-1}f_i \in \modsig$ (resp. $-1 = \dd\prod_{i=0}^gf_i \in\modsig$) lorsque la courbe est séparante (resp. non séparante), il suit de l'exemple \ref{sncalcul} que $s_{top}(-1) = 0$ (resp. $s_{top}(-1) = g-k+1 \mod 2$). D'après le Lemme \ref{lemmodsig}, si $g-k+1 = 0\mod 2$, $s_{top}$ passe au quotient en un morphisme de $F^-$ à valeurs dans $\ZZ/2\ZZ$. Il en est de même pour $s_N$ si $\deg(N) = g+1\mod 2$.
  \item Lorsque la courbe est séparante, l'Exemple \ref{sncalcul} et
    le Lemme \ref{lemmodsig} nous montrent que le morphisme $s_{top}$
    est nul (comparer avec \cite{grossharris}). Ainsi, dans ce cas, le signe $s_N(f)$ se calcule
    simplement en regardant le signe de $f$ sur les composantes de
    $\RR\surg$ où $\RR N$ est orientable et en en faisant le produit.
  \end{itemize}
  Dans le cas général, si nous choisissons une composante connexe
  particulière $(\RR\surg)_0$ de $\RR\surg$, la Remarque \ref{remprod}
  nous permet de définir $s_{top}$ sur le produit $\ZZ/2\ZZ \times
  F^-$. Si nous prenons une autre composante $(\RR\surg)_i$, alors le
  morphisme obtenu se déduit du précédent en le composant à droite par
  l'isomorphisme donné dans la Remarque \ref{remprod}.
\end{Rem}

Nous pouvons maintenant énoncer un résultat en tout degré.

\begin{Theoreme}\label{enoncesn}
  Soit $(N,c_N)$ un fibré en droites complexes sur $(\surg,c_\sur)$ de partie réelle non vide et $f\in\raut{N}$. Alors l'automorphisme $f$ préserve les orientations du fibré $\Det$ si et seulement si $s_N(f) = 0$. 
\end{Theoreme}

\begin{Corollaire}\label{enoncesep}
  Si $f$ est positive sur $\RR\surg$ et préserve une structure $Spin$ réelle de $\surg$, alors $f$ préserve les orientations de $\Det$.

Si $(\surg,c_\sur)$ est séparante, alors $f$ préserve les orientations de $\Det$ si et seulement si $f$ échange les orientations d'un nombre pair de composantes orientables de $\RR N$.
\end{Corollaire}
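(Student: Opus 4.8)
The plan is to deduce both assertions from Theorem~\ref{enoncesn}, which identifies the sign of the action of $f$ on the orientations of $\Det$ with $s_N(f) = s_{top}(f) + \beta_0(f)(w_1(\RR N))$, together with the explicit descriptions of $s_{top}$ and $\beta_0$ obtained in \S\ref{subenon}.

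For the first assertion: if $f$ is positive on $\RR\surg$, then $\frac{f}{|f|}$ is the constant function $1$, so the morphism $\epsilon_f$ vanishes identically and hence $\beta_0(f) = 0$ as a form on $\hczdeux{\RR\surg}$. If moreover $f$ preserves a real $Spin$ structure $\zeta$, say $\zeta\in\rspin(\surg,w_\zeta)$, then $f.\zeta = \zeta$, so $\arf(f.\zeta) = \arf(\zeta)$ and therefore $\AA(f,w_\zeta) = 0$. Since, by Lemma~\ref{affine}, the value $s_{top}(f) = \beta_0(f)(w) + \AA(f,w)$ does not depend on the chosen $w\in\hczdeux[g+1]{\RR\surg}$, taking $w = w_\zeta$ gives $s_{top}(f) = 0$, whence $s_N(f) = s_{top}(f) + \beta_0(f)(w_1(\RR N)) = 0$; Theorem~\ref{enoncesn} then shows that $f$ preserves the orientations of $\Det$.

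For the second assertion I would use that, when $(\surg,c_\sur)$ is separating, Example~\ref{sncalcul} shows that $s_{top}$ vanishes on every element of the family $\mathcal{B}$, which generates $\modsig$ by Lemma~\ref{lemmodsig}; since $s_{top} = \AA^w + \beta_0(\cdot)(w)$ is a group morphism, it follows that $s_{top}\equiv 0$. Consequently $s_N(f) = \beta_0(f)(w_1(\RR N)) = \epsilon_f(\mathds{1} - w_1(\RR N)^{\pd})$. Now $\mathds{1} - w_1(\RR N)^{\pd}$ is the sum $\sum_i[\RR\surg]_i^{\pd}$ taken over the indices $i$ for which $(\RR N)_i$ is orientable, so that $\beta_0(f)(w_1(\RR N)) = \sum_i\epsilon_f([\RR\surg]_i^{\pd})$ records the parity of the number of orientable components $(\RR N)_i$ on which $f$ is negative. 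Finally, over such a component --- a trivial real line bundle over the circle $(\RR\surg)_i$ --- the automorphism $f$ reverses the orientation precisely when $f$ is negative there, so $s_N(f) = 0$ if and only if $f$ reverses the orientations of an even number of orientable components of $\RR N$; Theorem~\ref{enoncesn} concludes.

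Since the substantive content is already packaged in Theorem~\ref{enoncesn}, Lemma~\ref{affine} and Example~\ref{sncalcul}, I do not expect a genuine obstacle here; the one point that requires care is the bookkeeping with Poincaré duals and with the definition of $\epsilon_f$ through the augmentation $(\ZZ/2\ZZ)^k\to\ZZ/2\ZZ$ --- one must check that $\beta_0(f)(w_1(\RR N))$ really equals the sum of the signs of $f$ over the orientable components, and that ``$f$ reverses the orientation of $(\RR N)_i$'' is correctly read off as ``$f<0$ on $(\RR\surg)_i$'' only after trivialising the orientable bundle $(\RR N)_i$.
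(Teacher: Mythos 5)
Votre démonstration est correcte et suit essentiellement la même voie que celle du texte : le Théorème \ref{enoncesn} combiné au Lemme \ref{affine} pour la première assertion, et l'annulation de $s_{top}$ dans le cas séparant (Exemple \ref{sncalcul} et Lemme \ref{lemmodsig}, c'est-à-dire le contenu de la Remarque \ref{snrem}) pour la seconde. Vous ne faites qu'expliciter les vérifications (positivité de $f$ donnant $\beta_0(f)=0$, identification de $\epsilon_f(\mathds{1}-w_1(\RR N)^{\pd})$ avec la parité du nombre de composantes orientables où $f$ est négative) que l'article laisse au lecteur.
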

\begin{proof}
  Ceci suit du Théorème \ref{enoncesn}, du Lemme \ref{affine} et de la Remarque \ref{snrem}.
\end{proof}

Le Théorème \ref{enoncea} est un cas particulier du Théorème \ref{enoncesn} grâce à la Remarque \ref{snrem}. Remarquons aussi que le signe $s_N$ se décompose en une partie \og purement topologique \fg ne dépendant pas du fibré $N$ et une contribution de la partie réelle de $N$.

Pour démontrer le Théorème \ref{enoncesn} nous commençons par étudier le cas où $N$ est de degré $0$ et $w_1(\RR N)$ est nul en décrivant explicitement l'action des automorphismes sur les conoyaux des opérateurs de Cauchy-Riemann réels sur $N$. Dans un deuxième temps, nous réduisons le cas général au cas du degré $0$.

\subsubsection{Conoyaux d'opérateurs de Cauchy-Riemann réels}

L'objet de cette section est de décrire le conoyau réel d'un opérateur
de Cauchy-Riemann sur le fibré en droites complexes trivial
$(\TCC,conj)\rightarrow (\surg,c_{\sur})$. Nous supposons de plus que $g$ est non nul.

Pour une structure complexe $J$ sur $\surg$ qui rend $c_\sur$ anti-holomorphe, nous notons $\DDJ$
l'opérateur de Cauchy-Riemann réel $\dd\frac{1}{2}(\d + i\circ\d\circ J)$ sur le fibré en droites complexes trivial.  Dans le
cas où $(\surg,c_\sur)$ est séparante, nous aurons aussi besoin d'une
fonction $u : \surg\rightarrow \CC$, $\ZZ/2\ZZ$-équivariante, valant
$0$ au voisinage de $\RR\surg$, $i$ sur une hémisphère et $-i$ sur
l'autre. Cette fonction n'est pas unique, nous faisons donc un choix. Nous choisissons aussi une base symplectique réelle $(a_i,b_i)$ de $\hhz{\surg}$. Remarquons qu'elle induit une orientation sur chaque composante de $\RR\surg$. D'autre
part, les fonctions de la famille $\mathcal{B}$ construites au \S \ref{modsig} et que l'on
utilise ici ne sont pas (généralement) à prendre à isotopie près, mais
sont bien des représentants précis et choisis de leurs classes. En effet, si $\DB$ est un opérateur de Cauchy-Riemann réel sur $(\TCC,conj)$,
alors nous utiliserons le morphisme
\[
\begin{array}{c c c}
  \RR\CCC^{\infty}(\surg,\CC^*) & \rightarrow & H^1_{\DB}(\surg,\TCC)_{+1}\\
  f                            & \mapsto     & \dd\frac{\DB(f)}{f}.
\end{array}
\]
Lorsque $\DB = \DDJ$, une fonction $f\in\RR\CCC^{\infty}(\surg,\CC^*)$ admettant un logarithme $g\in\RR\CCC^{\infty}(\surg,\CC)$ s'envoie sur $\DDJ(g) = 0\in H^1_{\DDJ}(\surg,\TCC)_{+1}$ par ce morphisme. Cette application est donc bien définie sur $\modsig$ lorsque $\DB = \DDJ$. Ce n'est toutefois plus le cas lorsque $\DB \neq\DDJ$. En effet, un opérateur $\DB$ sur le fibré trivial s'écrit $\DDJ + \alpha$, avec $\alpha\in\Gamma(\surg,\Lambda_{J}^{0,1}\surg)_{+1}$, et pour $g\in\RR\CCC^{\infty}(\surg,\CC)$,
\[
\dd\frac{\DB(\exp(g))}{\exp(g)} = \DDJ(g) + \alpha
\]
ce qui n'est pas en général dans l'image de $\DB$.

\begin{Lemme}\label{optriv}
  \begin{itemize}
  \item[Si $(\surg,c_\sur)$ n'est pas séparante:] $H_{\DDJ}^1(\surg,\TCC)_{+1}$ est
    engendré par les formes $\DBS{f_1},\ldots,\DBS{f_g}$.
  \item[Si $(\surg,c_\sur)$ est séparante:] $H_{\DDJ}^1(\surg,\TCC)_{+1}$ est
    engendré par les formes \newline
    $\DBS{f_1},\ldots,\DBS{f_{k-1}},\DBS{f_k},u\DBS{f_k},\ldots,\DBS{f_{k+m-1}},u\DBS{f_{k+m-1}}$ ou
    encore \newline
    $\DBS{f_1},\ldots,\DBS{f_{k-1}},\DBS{g_k},u\DBS{g_k},\ldots,\DBS{g_{k+m-1}},u\DBS{g_{k+m-1}}$.
  \end{itemize}
\end{Lemme}

\begin{proof}
  Soit $(\omega_1,\ldots,\omega_g)$ une base de
  $H_{\DDJ}^0(\surg,K_{\sur})$ vérifiant
  \[
  \dd\int_{a_l}\omega_j = \frac{1}{2i\pi}\delta_{j,l}.
  \]
  Nous différencions les deux cas:
  \begin{itemize}
  \item[Si $(\surg,c_\sur)$ n'est pas séparante:] Posons $\omega'_j =
    \frac{\omega_j-\overline{c_{\sur}^*\omega_j}}{2}$; cette nouvelle famille
    forme une base de $H_{\DDJ}^0(\surg,K_{\sur})_{-1}$. D'autre part, en reprenant les notations du \S \ref{modsig}, on remarque que pour tout $l\in\{1,\ldots,g\}$, $f_{l|A_l}$ admet un logarithme $\log(f_{l|A_l})$ valant $i\pi$ sur $a_l$. On a alors
    \[
    \begin{array}{c c l}
    \dd\int_\sur\DBS{f_l}\wedge\omega'_j  & = & \dd\int_{A_l}\DBS{f_l}\wedge\omega'_j \\
                                         & = & \dd\int_{A_l}\d\left(\log(f_{l|A_l})\omega'_j\right)\\
                                         & = & \dd\int_{\partial(A_l)} \log(f_{l|A_l})\omega'_j \\
                                         & = & 2i\pi\dd\int_{a_l}\omega'_j\\
                                         & = & \delta_{j,l}.      
    \end{array}
    \]
  \item[Si $(\surg,c_\sur)$ est séparante:] Posons $\omega'_j =
    \frac{\omega_j-\overline{c_{\sur}^*\omega_j}}{2}$ pour $j = 1,\ldots,k+m-1$ et
    $\omega''_j = \frac{\omega_j + \overline{c_{\sur}^*\omega_j}}{2i}$ pour
    $j=k,\ldots,k+m-1$; de même, cette famille forme une base de
    $H_{\DDJ}^0(\surg,K_{\sur})_{-1}$. Dans ce cas nous avons
    \[
    \begin{array}{c}
      \dd\int \DBS{f_l}\wedge\omega'_j = \delta_{j,l},\text{ pour } j,l=1,\ldots,k+m-1\\
      \dd\int u\DBS{f_l}\wedge\omega''_j = \delta_{j,l},\text{ pour } j,l=k,\ldots,k+m-1.
    \end{array}
    \]
    Faisons le calcul dans le second cas, et fixons $j,l\in\{k,\ldots,k+m-1\}$. De même que précédemment, $f_{l|A_l}$ admet un logarithme $\log(f_{l|A_l})$ valant $i\pi$ sur $a_l$, et on a
    \[
    \begin{array}{c c l}
    \dd\int_{\sur}u\DBS{f_l}\wedge\omega''_j  & = & \dd\int_{A_l}i\DBS{f_l}\wedge\omega''_j + \dd\int_{A_{l+m}}-i\DBS{f_l}\wedge\omega''_j  \\
                                           & = &  \dd\int_{A_l}i\DBS{f_l}\wedge\omega''_j + \dd\int_{A_{l+m}}-\overline{c_{\sur}^*\left(i\DBS{f_l}\wedge\omega''_j\right)} \\
                                           & = & 2\Re\left(\dd\int_{A_l}i\DBS{f_l}\wedge\omega''_j\right)  \\
                                         & = & 2\Re\left(i\dd\int_{A_l}\d\left( \ln(f_{l|A_l})\omega''_j\right)\right)\\
                                         & = & 2\Re\left(i\dd\int_{\partial(A_l)} \ln(f_{l|A_l})\omega''_j\right) \\
                                         & = & -4\pi\Re\left(\dd\int_{a_l}\omega''_j\right)\\
                                         & = & 2\pi\Re\left(i\dd\int_{a_l}\omega_j - i\dd\int_{a_{l+m}}\omega_j\right)\\
                                         & = & \delta_{j,l}.      
    \end{array}
    \]
  \end{itemize}
  La dualité de Serre (voir \cite{wel1}) nous fournit alors le résultat.
\end{proof}

Nous allons maintenant rayonner dans $\ropj[\TCC]$ à partir
de l'opérateur $\DDJ$ en partant dans les directions fournies par
l'action des automorphismes construits au \S \ref{modsig} sur cet opérateur. Posons pour
$f\in\RR\CCC^{\infty}(\surg,\CC^*)$
\[
\DB_{J,f}^t = \DDJ + t \DBS{f}.
\]
On remarque que d'après le Lemme \ref{optriv}, si $\ind(f)\neq 0$, alors
le fibré holomorphe $(\TCC,\DB_{J,f}^t)$ est isomorphe au fibré holomorphe trivial
si et seulement si $t$ est entier. En effet, si $g\in\raut{\TCC}$ est un tel isomorphisme, alors nous avons
\[\label{fg}
\tag{$*$}
\DDJ + t\DBS{f} = g^*\DDJ = \DDJ + \DBS{g}.
\]
Si l'on décompose $f$ et $g$ grâce à la famille $\mathcal{B}$, $\DBS{f}$ et $\DBS{g}$ s'écrivent comme combinaisons linéaires à coefficients entiers, donnés par les indices de $f$ et $g$, des formes apparaissant dans le Lemme \ref{optriv}. L'égalité (\ref{fg}) impose donc que $t$ doit être entier.

 En particulier, lorsque $t$ n'est pas entier, le noyau de l'opérateur
$\DB_{J,f}^t$ est nul, et son conoyau est de dimension $g-1$. Plus
précisément, nous avons le résultat suivant.

\begin{Proposition}\label{conoyau}
Soit $(\surg,c_\sur)$ une surface de Riemann réelle de genre $g$ non nul et de structure complexe fixée $J\in\RR J(\surg)$. Si $t$ est un réel non entier, alors:
  \begin{itemize}
  \item[Si $(\surg,c_\sur)$ n'est pas séparante:] pour tout $l\in\{1,\ldots,g\}$, $H_{\DDJTL}^1(\surg,\TCC)_{+1}$ est
    engendré par les formes\newline
    $\alpha_1=\DBS{f_1},\ldots,\widehat{\DBS{f_l}},\ldots,\alpha_g = \DBS{f_g}$.
  \item[Si $(\surg,c_\sur)$ est séparante:] $H_{\DDJTL}^1(\surg,\TCC)_{+1}$ est
    engendré par les formes\newline
    $\alpha_1 = \DBS{f_1},\ldots,\widehat{\DBS{f_l}},\ldots,\alpha_{k-1}=\DBS{f_{k-1}},\alpha_{k} = \DBS{f_k},$ \newline $\alpha_{k+m} = u\DBS{f_k},\ldots,\alpha_{g-m} = \DBS{f_{g-m}},$ $\alpha_g = u\DBS{f_{g-m}}$ si $1\leq
    l\leq k-1$, \newline 
ou par les formes
    $\alpha_1 = \DBS{f_1},\ldots,\widehat{\DBS{f_l}},\widehat{u\DBS{f_l}},\ldots,$ $\alpha_{g-m} = \DBS{f_{g-m}},\alpha_g = u\DBS{f_{g-m}},
    \alpha_{l+m} = \DDJ(u)$ si $k \leq l \leq k+m-1 = g-m$.

    De même, $H_{\DDJTG}^1(\surg,\TCC)_{+1}$ est engendré par les formes\newline
    $\alpha_1 = \DBS{f_1},\ldots,\widehat{\DBS{g_l}},\widehat{u\DBS{g_l}},\ldots,\alpha_{g-m} = \DBS{g_{g-m}},\alpha_g = u\DBS{g_{g-m}},$ \newline
    $\alpha_{l+m} = \DDJ(u)$ pour $l=k,\ldots,k+m-1 = g-m$.
  \end{itemize}
\end{Proposition}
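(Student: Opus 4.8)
The plan is to reduce the statement to a finite nondegeneracy computation dual to the one in Lemma~\ref{optriv}. First, as recalled just before the statement, for $t\notin\ZZ$ the holomorphic line bundle $(\TCC,\DDJTL)$ has degree $0$ and is not the trivial bundle, hence has no nonzero holomorphic section; so $H^0_{\DDJTL}(\surg,\TCC)_{+1}=0$, and by the real Riemann--Roch theorem $H^1_{\DDJTL}(\surg,\TCC)_{+1}$ has dimension $g-1$ (likewise for $\DDJTG$). Each list in the statement has exactly $g-1$ elements, so it suffices to prove that the listed forms are $\RR$-linearly independent in the cokernel.

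Two evident relations dictate the shape of the lists. From $\DDJTL(1)=t\DBS{f_l}$ we get $\DBS{f_l}=\tfrac1t\DDJTL(1)\in\im(\DDJTL)_{+1}$, so $\DBS{f_l}$ drops out; and in the separating case with $k\le l\le g-m$, directly from the definition of the operator, $\DDJTL(u)=\DDJ(u)+t\,u\DBS{f_l}$, so $u\DBS{f_l}$ and $\DDJ(u)$ span the same line in the cokernel. Note that $\DDJ(u)\in\im(\DDJ)_{+1}$ but is not a priori in $\im(\DDJTL)_{+1}$, so it is a legitimate nonzero element of the list.

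For the surviving forms I would argue by Serre duality, exactly as Lemma~\ref{optriv} is concluded. Since $\DBS{1/f_l}=-\DBS{f_l}$, the holomorphic dual of $(\TCC,\DDJTL)$ is $(\TCC,\DDJ-t\DBS{f_l})$, so $H^1_{\DDJTL}(\surg,\TCC)_{+1}$ is dual to the $(g-1)$-dimensional space $W$ of $\ZZ/2\ZZ$-anti-invariant sections of $K_\sur$ holomorphic for this twisted structure, under the pairing $(\alpha,\omega)\mapsto\int_\surg\alpha\wedge\omega$. It then suffices to build a basis of $W$ against which the listed forms pair by the identity matrix. Most of this basis comes from the basis $(\omega'_j)$ (and $(\omega''_j)$) of $H^0_{\DDJ}(\surg,K_\sur)_{-1}$ used in Lemma~\ref{optriv}: for a kept index $j$ the relevant periods of $\omega'_j$ over $a_l$ (resp.\ over $a_l$ and $a_{l+m}$) vanish, which is the obstruction to correcting $\omega'_j$, by a convergent iteration with $\psi$ supported in $A_l$ (resp.\ $A_l\cup A_{l+m}$), into an element $\omega_j\in W$ equal to $\omega'_j$ off those annuli. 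Then $\int_\surg\alpha_i\wedge\omega_j$ localizes to the annulus carrying $\alpha_i$, where $\omega_j=\omega'_j$, so the Stokes computation of Lemma~\ref{optriv} gives $\delta_{ij}$, while $\int_\surg\DDJ(u)\wedge\omega_j=\int_\surg\d(u\,\omega'_j)=0$ since $u$ is locally constant near those annuli.

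The hard part is the one remaining dual vector $\omega_\star\in W$ needed in the case $k\le l\le g-m$: it must have nonzero period over $a_l$, so that $\int_\surg\DDJ(u)\wedge\omega_\star=-t\int_\surg u\DBS{f_l}\wedge\omega_\star\neq0$, and it exists because $\dim W=g-1$ strictly exceeds the number of forms produced by the corrections above. Constructing it and computing this pairing — i.e.\ understanding how the two basis vectors $\DBS{f_l},\,u\DBS{f_l}$ of $H^1_{\DDJ}(\surg,\TCC)_{+1}$ collapse to the single class of $\DDJ(u)$ after the twist — is the main obstacle; the other cases and the indexing are routine variants of Lemma~\ref{optriv}. (One may first note that pullback by a power of $f_l$ identifies $\DB_{J,f_l}^{t+n}$ with $\DDJTL$ and fixes each listed form, since $f_l\equiv 1$ off $A_l\cup A_{l+m}$, so the statement depends only on $t$ mod $\ZZ$; but the argument above is uniform in $t$.)
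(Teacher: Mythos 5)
Your opening reductions are correct and consistent with what the paper establishes just before the statement: for $t\notin\ZZ$ the kernel vanishes and the cokernel has dimension $g-1$, the form $\DBS{f_l}=\frac1t\DDJTL(1)$ lies in the image, and $\DDJ(u)\equiv -t\,u\DBS{f_l}$ modulo the image, so the shape of the lists is explained. But the dual-basis construction that is supposed to do the real work fails at its first step. An element $\omega_j\in W$ that is \emph{equal} to $\omega'_j$ off $A_l$ cannot exist. Indeed, on $A_l$ the twisted holomorphic structure is gauge-equivalent to the standard one via a branch of a $t$-th power of $f_l$, whose values on the two boundary circles of $A_l$ differ by the factor $\e^{2i\pi t}\neq 1$; since $\omega_j$ and $\omega'_j$ are both holomorphic for the standard structure outside the support of $\d f_l$ and agree on the open set $\surg\setminus A_l$, unique continuation forces the gauge-transformed $\omega_j$ to coincide with $\omega'_j$ throughout $A_l$, and hence $\omega_j=\e^{\pm 2i\pi t}\omega'_j$ near the second boundary circle, which contradicts $\omega_j=\omega'_j$ there unless $\omega'_j\equiv 0$. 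So the obstruction is not the period of $\omega'_j$ along $a_l$: no correction supported in $A_l$ exists at all, the identity $\int_\surg\alpha_i\wedge\omega_j=\int_{A_i}\alpha_i\wedge\omega'_j$ on which your whole computation rests is unavailable, and the invertibility of the actual pairing matrix between the listed forms and a basis of $W$ is left unproved. On top of this, you yourself declare the dual vector $\omega_\star$ and the collapse of $\DBS{f_l},\,u\DBS{f_l}$ onto the single class of $\DDJ(u)$ to be "the main obstacle" and do not resolve it; that is precisely where the content of the Proposition sits.

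For comparison, the paper does not exhibit a dual basis but argues by contradiction. For rational $t=p/q$ it pulls a hypothetical relation $\DDJTL(v_t)=\sum_j\lambda_j\alpha_j$ back to the $q$-sheeted cyclic cover associated with $a_l\bullet\,\cdot$, where $\tilde f_l$ admits a genuine $t$-th power; the relation becomes $\DDJ(\tilde v_t\tilde f^t)=\sum_j\lambda_j\sum_n\e^{2i\pi nt}\alpha_{j,n}$, and pairing with holomorphic forms on the cover kills all the $\lambda_j$. The class $\DDJ(u)$ requires a separate argument: a holomorphic function $h_t$ on one half of the cover whose boundary values lie in a finite union of lines, contradicting the open mapping theorem. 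Irrational $t$ is then reached through the transversality statement of Lemme \ref{lemmetexnik}, which makes the projection $\pi_{f,\underline{\lambda}}$ open so that a nonempty $\mathcal{M}_{f,\underline{\lambda}}$ would have to meet a rational parameter. None of these mechanisms, nor working substitutes for them, appear in your proposal, so the argument as written does not establish the Proposition.
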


Nous commençons par un résultat intermédiaire. Prenons $f = f_l$, $l\in\{1,\ldots,g\}$, une des fonctions de la famille $\mathcal{B}$, et soit $\underline{\lambda} = (\lambda_1,\ldots,\hat{\lambda_l},\ldots,\lambda_g)$ un ($g-1$)-uplet de réels ($g$ est non nul). Posons
  \[
  \begin{array}{c c c l}
    F_{f,\underline{\lambda}}: & \RR J(\surg)\times \RR\setminus\ZZ\times L^{1,p}(\surg,\CC)_{+1} & \rightarrow & \mathcal{E}^{p}_{+1}\\
    & (J,t,v)       & \mapsto & \DDJT v -\dd\sum_{
    \begin{subarray}{c}
      j=1 \\ j\neq l
    \end{subarray}}^g \lambda_j \alpha_j.
  \end{array}
  \]
  où $p>2$ et $\mathcal{E}^p_{+1}$ est le fibré sur $\RR J(\surg)$ de fibre au-dessus de $J$ les $(0,1)$-formes pour $J$ à valeurs complexes $L^p(\surg,\Lambda_J^{0,1}\surg)_{+1}$. Considérons $\mathcal{M}_{f,\underline{\lambda}} = F_{f,\underline{\lambda}}^{-1}(\{0\})$.  Notons $\pi_{f,\underline{\lambda}} : \mathcal{M}_{f,\underline{\lambda}} \rightarrow \RR\setminus \ZZ$ la projection sur le second facteur. 

  \begin{Lemme}\label{lemmetexnik}
Soit $(\surg,c_\sur)$ une surface compacte réelle orientée de genre $g$ non nul. Supposons que $\underline{\lambda}$ n'est pas nul. Alors:
\begin{enumerate}
\item $\mathcal{M}_{f,\underline{\lambda}}$  est une sous-variété de $\RR J(\surg)\times \RR\setminus\ZZ\times L^{1,p}(\surg,\CC)_{+1}$. Plus précisément, $0$ est une valeur régulière de $F_{f,\underline{\lambda}}$.
\item La différentielle $\d\pi_{f,\underline{\lambda}}$ est surjective en tout point de $\mathcal{M}_{f,\underline{\lambda}}$.
\end{enumerate}
  \end{Lemme}
  \begin{proof}
Supposons que $\mathcal{M}_{f,\underline{\lambda}}$ est non vide. Nous montrons en fait un résultat qui implique directement les deux points du Lemme. Fixons $(J,t,v) \in \mathcal{M}_{f,\underline{\lambda}}$, et montrons que 
\begin{multline*}
  \d_{(J,t,v)}F_{f,\underline{\lambda}}(.,0,.) : (\dot{J},w) \in T_J\RR J(\surg) \times L^{1,p}(\surg,\CC)_{+1} \\
 \mapsto  \DDJT w + \dd\frac{1}{2}(i\circ\d v\circ \dot{J} + t\dd\frac{v}{f} i\circ\d f\circ \dot{J}) - \dd\sum_{
    \begin{subarray}{c}
      j=1 \\ j\neq l
    \end{subarray}}^g \lambda_j \dot{\alpha_j}(\dot{J}) \in L^{p}(\surg,\Lambda_J^{0,1}\surg)_{+1}
\end{multline*}
est surjective. Ici, $\dot{\alpha_j}(\dot{J})$ désigne $\dd\frac{1}{2}(\dd\frac{1}{f_j}i\circ\d f_j\circ\dot{J})$, $\dd\frac{1}{2}(\dd\frac{u}{f_j}i\circ\d f_j\circ\dot{J})$, ou $\dd\frac{1}{2}(i\circ\d u\circ\dot{J})$ selon que $\alpha_j$ vaut $\DBS{f_j}$, $u\DBS{f_j}$ ou $\DDJ(u)$.

Déjà, l'image de $\d_{(J,t,v)}F_{f,\underline{\lambda}}(.,0,.)$ est fermée et son conoyau est de dimension finie, car l'opérateur $\DDJT$ est Fredholm. Supposons que $\d_{(J,t,v)}F_{f,\underline{\lambda}}(.,0,.)$ n'est pas surjective et prenons donc $\omega \in L^{p'}(\surg,K_\sur)$ non nulle telle que pour tout $(\dot{J},w) \in T_J\RR J(\surg) \times L^{1,p}(\surg,\CC)_{+1}$
\[\label{alp}
\dd\int_\sur\omega\wedge\d_{(J,t,v)}F_{f,\underline{\lambda}}(\dot{J},0,w) = 0.
\tag{$*$}
\]
En particulier, en prenant $\dot{J}= 0$, nous avons pour tout $w\in L^{1,p}(\surg,\CC)_{+1}$
\[
\dd\int_\sur\omega\wedge \DDJT w = 0.
\]
D'après le Théorème C.2.3 de \cite{MDS}, $\omega\in L^{1,p}(\surg,K_\sur)$ et vérifie $\DDJ \omega + t \omega\wedge\dd\frac{\DB_J(f)}{f} = 0$. Par régularité elliptique et d'après le Lemme de prolongement unique (voir \cite{ivshev}, \cite{hoflizsik} ou \cite{audin} par exemple), $\omega$ est lisse et ne s'annule qu'en un nombre fini de points.

Supposons dans un premier temps que nous ne sommes pas dans le cas où la courbe est séparante et $l\geq k$ et $\lambda_{l+m}\neq 0$. Montrons qu'il existe un ouvert $U$ non vide de $\overline{\surg \setminus \dd\bigcup_{j = 1}^g A_j}$, invariant par $c_\sur$, et sur lequel $\omega$ et $\d v$ sont non nulles. Comme le lieu des zéros de $\d v$ est fermé, l'existence de $U$ est équivalente au fait que $\d v$ ne soit pas partout nulle sur $\overline{\surg \setminus \dd\bigcup_{j = 1}^g A_j}$. Nous raisonnons par l'absurde et supposons donc que $v$ est localement constante sur $\overline{\surg \setminus \dd\bigcup_{j = 1}^g A_j}$. Comme $\overline{\surg \setminus \dd\bigcup_{j = 1}^g A_j}$ est connexe, $v$ y est constante. Par hypothèse, il existe $n \in \{1\ldots,g\}\setminus\{l\}$ tel que $\lambda_n\neq 0$. Nous distinguons plusieurs cas.
\begin{itemize}
\item Supposons tout d'abord que la courbe n'est pas séparante. Découpons alors $\surg$ le long d'une courbe homologue à $0$ de sorte à obtenir un tore privé d'un disque sur lequel seule la fonction $f_n$ varie. En recollant un disque pour obtenir un tore $T$ (voir Figure \ref{tore}) nous pouvons y prolonger $v$ et $f_n$, et nous avons alors l'équation
\[
\DB_{J_T} v = \lambda_n\frac{\DB_{J_T}{f_n}}{f_n}
\]
sur le tore $T$. Or, d'après le Lemme \ref{optriv}, ceci n'est pas possible, et nous obtenons une contradiction.
\begin{figure}[h]
  \centering
  \input{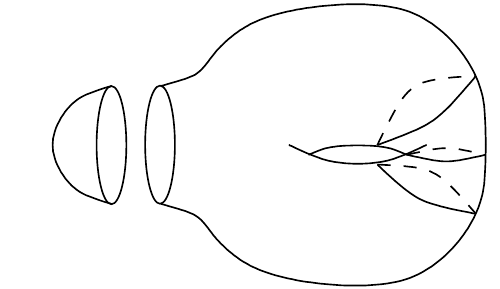_t}
  \caption{Le tore $T$ découpé}
  \label{tore}
\end{figure}

\item Si la courbe est séparante et $n<k$, nous pouvons utiliser le même raisonnement.

\item Supposons enfin que la courbe est séparante, que $n\geq k$ et que si $l\geq k$ alors $l+m\neq n$. Découpons alors $\surg$ le long de deux courbes complexes conjuguées et homologues l'une de l'autre de sorte à obtenir une surface de genre deux privée de deux disques sur laquelle seule la fonction $f_n$ (resp. $f_{n-m}$) varie si $n\leq g-m$ (resp. si $n> g-m$). En recollant deux disques pour obtenir une surface de genre deux $\sur'$ (voir Figure \ref{tored}) nous pouvons y prolonger $v$, $u$ et $f_n$ (resp. $f_{n-m}$) si $n\leq g-m$ (resp. si $n>g-m$), et nous avons alors les équations
\[
\begin{array}{c}
\DB_{J_{\sur'}} v = \lambda_n\frac{\DB_{J_{\sur'}}{f_n}}{f_n} + \lambda_{n+m} u\frac{\DB_{J_{\sur'}}{f_n}}{f_n} \text{ si } n\leq g-m,\\
\DB_{J_{\sur'}} v = \lambda_{n-m}\frac{\DB_{J_{\sur'}}{f_{n-m}}}{f_{n-m}} + \lambda_{n} u\frac{\DB_{J_{\sur'}}{f_{n-m}}}{f_{n-m}} \text{ si } n> g-m,
\end{array}
\]
sur la surface $\sur'$. Or, d'après le Lemme \ref{optriv}, ceci n'est pas possible, et nous obtenons à nouveau une contradiction.
\begin{figure}[h]
  \centering
  \input{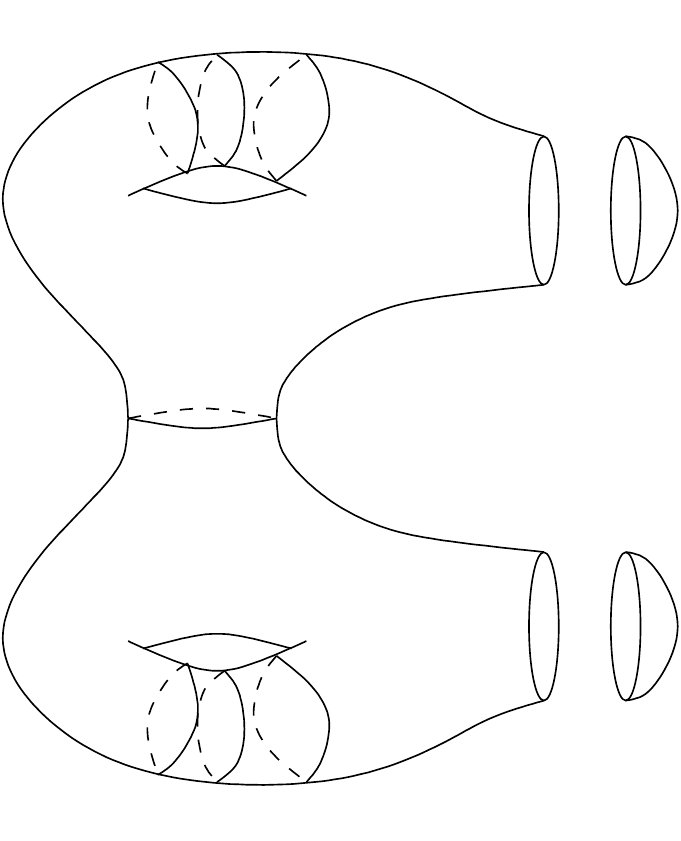_t}
  \caption{La surface $\sur'$ découpée}
  \label{tored}
\end{figure}
\end{itemize}

Prenons alors $U \subset \overline{\surg - \dd\bigcup_{j = 1}^g A_j}$ un ouvert invariant par $c_\sur$ tel que $\omega_{|U}$ et $\d v_{|U}$ ne s'annulent pas, et $\xi$ une $(0,1)$-forme pour $J$ à valeurs complexes, $\ZZ/2\ZZ$-équivariante sur $\surg$, à support inclus dans $U$ et telle que $\dd\int_\sur\omega\wedge\xi \neq 0$.
Comme $\d v$ ne s'annule pas sur $U$, posons $\dot{J} = (\d v)^{-1}\circ (-i)\circ \xi \in T_J\RR J(\surg)$ et nous avons
\[
\dd\int_\sur\omega\wedge\d_{(J,t,v)}F_{f,\underline{\lambda}}(\dot{J}, 0,0) = \dd\int_\sur\omega\wedge\xi \neq 0,
\]
car le support de $\dot{J}$ est inclus dans $\overline{\surg - \dd\bigcup_{j = 1}^g A_j}$. Ceci est en contradiction avec (\ref{alp}), et $\d_{(J,t,v)}F_{f,\underline{\lambda}}(.,0,.)$ est donc surjective.

 Supposons maintenant que la courbe est séparante, que $l\geq k$ et que $\lambda_{l+m}$ est non nul. Montrons qu'il existe un ouvert $U$ non vide de $\overline{\surg \setminus \dd\bigcup_{j = 0}^g A_j}$, invariant par $c_\sur$, et sur lequel $\omega$ et $\d v$ sont non nulles. On raisonne à nouveau par l'absurde, et on suppose que $v$ est localement constante sur $\overline{\surg \setminus \dd\bigcup_{j = 0}^g A_j}$. Sur  $\overline{\surg \setminus \dd\bigcup_{j = 1}^g A_j}$, $v$ vérifie $\DDJ v = \lambda_{l+m}\DDJ u$.
Il existe donc $c\in\RR$ tel que $v = \lambda_{l+m} u+c$ sur $\overline{\surg \setminus \dd\bigcup_{j = 1}^g A_j}$ car $\d(v-u)$ s'annule un nombre infini de fois sur ce même ensemble. Si l'on découpe maintenant un tore contenant $A_l$ et qu'on le rebouche comme précédemment, nous avons sur ce tore $T$
\[
\DB_{J_T} v + tv\dd\frac{\DB_{J_T} f_l}{f_l} = 0.
\]
Or, l'opérateur $\DB_{J_T,f_l}^t$ est injectif lorsque $t$ n'est pas entier. En effet, dans le cas contraire il existerait $g\in\CCC^\infty(T,\CC^*)$ tel que $t\dd\frac{\DB_{J_T} f_l}{f_l} = \dd\frac{\DB_{J_T} g}{g}$. En utilisant les mêmes techniques que dans le Lemme \ref{lemmodsig}, on peut démontrer que $g$ est homotope à un produit de la forme $(f_l)^p(g_l)^q$ avec $p,q\in\ZZ$. On a donc l'égalité
\[\label{eg}
t\dd\frac{\DB_{J_T} f_l}{f_l} = \dd\frac{\DB_{J_T} g}{g} = p\dd\frac{\DB_{J_T} f_l}{f_l} + q\dd\frac{\DB_{J_T} g_l}{g_l}\tag{$*$}
\]
dans $H^1_{\DB_{J_T}}(T,\TCC)$. Prenons alors $\omega\in H^0_{\DB_{J_T}}(T,K_T)$ tel que 
\[
\int_{a_l}\omega = \frac{1}{2i\pi}.
\]
On fait le produit extérieur par $\omega$ puis on intègre l'égalité (\ref{eg}) pour obtenir comme dans le Lemme \ref{optriv}
\[
t = p + q\int_{b_l}\omega.
\]
Le second terme de cette égalité est entier ou complexe alors que $t$ ne l'est pas. Donc $\DB_{J_T,f_l}^t$ est bien injectif et $v$ est nul sur $T$. Toutefois, ceci n'est pas possible car on aurait alors $c = \lambda_{l+m} i \in\RR$.

On raisonne ensuite de même que précédemment pour montrer que $\d_{(J,t,v)}F_{f,\underline{\lambda}}(.,0,.)$ est surjective.

Ceci nous donne immédiatement le premier point du Lemme. Quant au deuxième, $\d_{(J,t,v)}\pi_{f,\underline{\lambda}}$ est surjective si et seulement si pour tout $\dot{t}\in\RR$ il existe $(\dot{J},w) \in T_J\RR J(\surg) \times L^{1,p}(\surg,\CC)_{+1}$ tel que $(\dot{J},\dot{t},w)\in T_{(J,t,v)}\MM_{f,\underline{\lambda}} = \ker(\d_{(J,t,v)} F_{f,\underline{\lambda}})$. Autrement dit, $\d_{(J,t,v)}\pi_{f,\underline{\lambda}}$ est surjective si et seulement si
\[
\forall \dot{t}\in\RR,\ \exists(\dot{J},w) \in T_J\RR J(\surg) \times L^{1,p}(\surg,\CC)_{+1},\ \d_{(J,t,v)}F_{f,\underline{\lambda}}(\dot{J},0,w)= - \d_{(J,t,v)}F_{f,\underline{\lambda}}(0,\dot{t},0).
\]
Mais cette dernière assertion est vraie car $\d_{(J,t,v)}F_{f,\underline{\lambda}}(.,0,.)$ est surjective.
  \end{proof}

\begin{proof}[Démonstration de la Proposition \ref{conoyau}]
  {\bf Premier cas : le support de $\dd\frac{\DDJ(f_l)}{f_l}$ est connexe.} Prenons $f = f_l$ correspondant à une courbe stable ou
  globalement stable $a = a_l$, et soient $\lambda_1,\ldots,\hat{\lambda_l},\ldots,\lambda_g$ des réels.

  Nous établissons tout d'abord le résultat quand $t$ est
  rationnel. \'Ecrivons donc $t = \frac{p}{q}$ ($p\wedge q = 1$, et
  $q>1$). Supposons qu'il existe une fonction $v_t$
  vérifiant
  \[
  \DDJT (v_t) = \dd\sum_{
    \begin{subarray}{c}
      j=1 \\ j\neq l
    \end{subarray}}^g \lambda_j \alpha_j
  \]
  Notons $r_q:\widetilde{\surg}^q \rightarrow \surg$ le revêtement à
  $q$ feuillet associé à $a\bullet .\in
  \Hom(\hhz{\surg},\ZZ/q\ZZ)$ (voir Figure \ref{revet}). Alors le relevé $\tilde{f} = f\circ r_q$ de $f$ sur
  $\widetilde{\surg}^q$ admet une puissance $t$-ième (non unique, on
  en choisit une notée $\tilde{f}^t$). On note $\tilde{v}_t$ et
  $\tilde{f_j}$ les relevés respectifs de $v_t$ et $f_j$, $j\in\{1,\ldots,g\}\setminus\{l\}$; $\DDJ$ désignera
  aussi le relevé de $\DDJ$. Les fonctions $\tilde{f_j}$ s'écrivent
  $\tilde{f_j} = f_{j,0}\times\ldots\times f_{j,q-1}$, où les $f_{j,n}$ sont des
  fonctions du même type que celles construites au \S
  \ref{modsig}, à partir des relevés $a_{j,0},\ldots,a_{j,q-1}$ de $a_j$; chacune ne varie donc que dans un des
  feuillets (voir Figure \ref{revet}).
  \begin{figure}[h]
    \centering
    \input{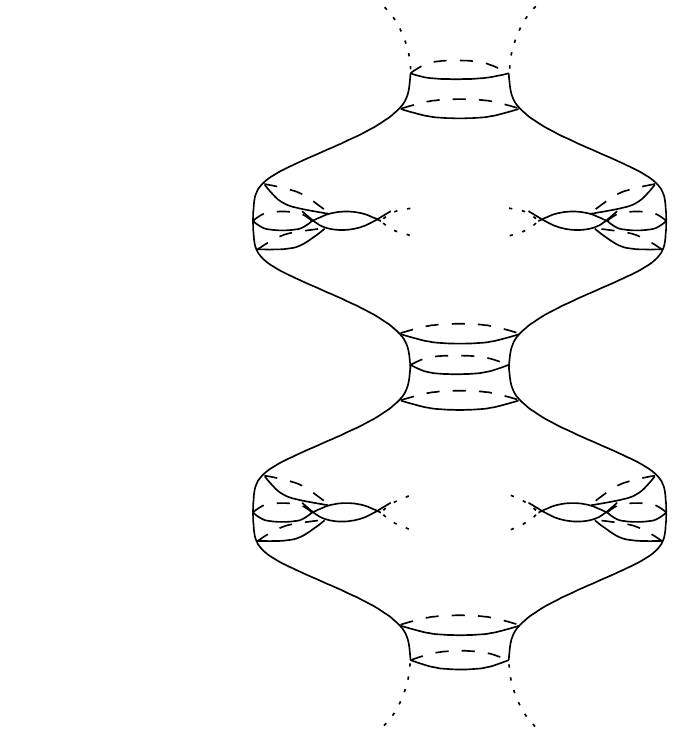_t}
    \caption{Revêtement et relevés}
    \label{revet}
  \end{figure}

 Celles-ci sont numérotées de sorte que
  $\tilde{f}^t\alpha_{j,n} = \e^{2i\pi nt}\alpha_{j,n}$. Comme
\[
\DDJ(\tilde{v}_t\tilde{f}^t) = \tilde{f}^t\DB^t_{J,\tilde{f}}(\tilde{v}_t),
\]
on a
  \[
  \DDJ(\tilde{v}_t\tilde{f}^t) = \dd\sum_{    
    \begin{subarray}{c}
      j=1 \\ j\neq l
    \end{subarray}}^g\lambda_{j}\dd\sum_{n=0}^{q-1}\e^{2i\pi nt}\alpha_{j,n}.
  \]
  Or, si le terme de gauche est dans l'image de l'opérateur $\DDJ$, ce
  n'est pas le cas de celui de droite si un des $\lambda_j$ est non nul. Ceci se vérifie en effet en
  utilisant par exemple une famille libre de $q(g-1)$ formes holomorphes $(\omega_{j,n})_{
    \begin{subarray}{c}
      1\leq j\neq l\leq g\\ 0\leq n\leq q-1
    \end{subarray}
}$ sur $\widetilde{\surg}^q$ associée à la famille $(a_{j,n})_{
    \begin{subarray}{c}
      1\leq j\neq l\leq g\\ 0\leq n\leq q-1
    \end{subarray}
}$, c'est-à-dire telles que 
\[
\dd\int_{a_{j,n}}\omega_{r,s} = \dd\frac{1}{2i\pi}\delta_{(j,n),(r,s)}.
\]
Alors, pour tout $j,r\in\{1,\ldots,g\}\setminus\{l\}$ et $n,s\in\{0,\ldots,q-1\}$, on a comme dans le Lemme \ref{optriv} 
\[
  \dd\int_{\widetilde{\surg}^q}\alpha_{j,n}\wedge\omega_{r,s} = \dd\int_{\widetilde{\surg}^q}\DBS{f_{j,n}}\wedge\omega_{r,s} = \delta_{(j,n),(r,s)},
\]
si $(\surg,c_\sur)$ n'est pas séparante ou si $(\surg,c_\sur)$ est séparante et $j,r\leq k+m-1$, et
\[
  \dd\int_{\widetilde{\surg}^q}\alpha_{j,n}\wedge\omega_{r,s} = \dd\int_{\widetilde{\surg}^q}u\DBS{f_{j-m,n}}\wedge\omega_{r,s} = i\delta_{(j-m,n),(r,s)},
\]
si $(\surg,c_\sur)$ est séparante et $j > k+m-1$ et $r\leq k+m-1$. Nous pouvons maintenant calculer
\[
  \dd\int_{\widetilde{\surg}^q}\omega_{r,s}\wedge\DDJ(\tilde{v_t}\tilde{f}^t) = \dd\int_{\widetilde{\surg}^q}\omega_{r,s}\wedge\left(\dd\sum_{    
    \begin{subarray}{c}
      j=1 \\ j\neq l
    \end{subarray}}^g\lambda_j\dd\sum_{n=0}^{q-1}\e^{2i\pi nt}\alpha_{j,n}\right).
\]
ce qui donne
\[
\begin{array}{c}
0 = \lambda_{r}\e^{2i\pi st} \text{ si }(\surg,c_\sur) \text{ n'est pas séparante ou si } (\surg,c_\sur) \text{ est séparante et } r<k\\
0 = (\lambda_{r}+i\lambda_{r+m})\e^{2i\pi st} \text{ si } (\surg,c_\sur)\text{ est séparante et } k\leq r\leq k+m-1.
\end{array}
\]
Ainsi tous les $\lambda_j$ sont nuls.

Passons maintenant au cas où $t$ n'est plus forcément rationnel. Si nous supposons par l'absurde que $\mathcal{M}_{f,\underline{\lambda}}$ est non vide et que $\underline{\lambda}$ n'est pas nul, alors d'après le Lemme \ref{lemmetexnik}, $\pi_{f,\underline{\lambda}}$ est une application ouverte et son image contient donc un rationnel. Mais ceci contredit la première partie de la preuve. $\mathcal{M}_{f,\underline{\lambda}}$ est donc vide, ce qui conclut ce premier cas.

  {\bf Deuxième cas : le support de $\dd\frac{\DDJ(f_l)}{f_l}$ a deux composantes connexes.} Prenons $f = f_l$ correspondant à une paire de
  courbes simples notée $c$. Nous ne montrons ici que le fait que la
  forme $\alpha_{l+m} = \DDJ(u)$ n'est pas dans l'image de l'opérateur $\DDJT$, le
  reste se montrant comme dans le premier cas.

  Commençons par le cas $t = \frac{p}{q}$, $p\wedge q = 1$ et
  $q>1$. Supposons par l'absurde l'existence d'une fonction $v_t$
  vérifiant
  \[
  \DDJT (v_t) = \DDJ (u).
  \]
  En raisonnant comme précédemment sur le revêtement à $q$ feuillets
  associé à $c\bullet .$, et en dénotant avec un tilde les
  relevés des diverses fonctions, voir Figure \ref{revetsep}, on a
  \[
  \DDJ(\tilde{v}_t\tilde{f}^t) = \tilde{f}^t\DDJ(\tilde{u}).
  \]
  La surface $\widetilde{\surg}^q$ est séparée en deux composantes connexes par les relevés de $\RR\surg$.
  \begin{figure}[h]
    \centering
    \input{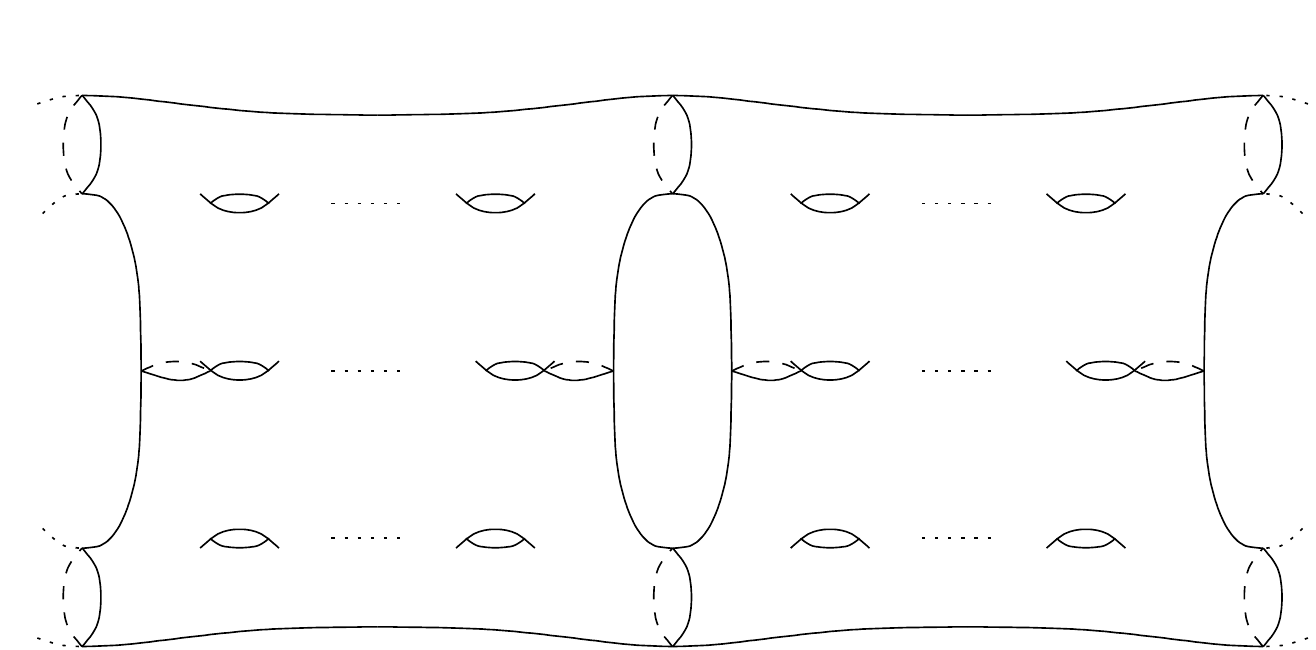_t}
    \caption{Revêtement et relevés pour une courbe séparante}
    \label{revetsep}
  \end{figure}

 Prenons
  $\widetilde{\surg}^{q,+}$ celle où la partie imaginaire de
  $\tilde{u}$ est positive, et restreignons-y l'équation précédente. Définissons la fonction $\tilde{u}^+$ par $\tilde{u}^+ =
  \dd\sum_{n=0}^{q-1}u_n$, où $u_n = \tilde{u}$ sur le $n$-ième
  feuillet et $u_n = i$ ailleurs. 
Ainsi $\DDJ(\tilde{u}^+) =
  \DDJ(\tilde{u})$, et $\tilde{f}^t\DDJ(\tilde{u}^+) =
  \dd\sum_{n=0}^{q-1}\e^{2i\pi nt}\DDJ(u_n)$, de sorte que nous obtenons une
  fonction holomorphe $h_t$ sur $\widetilde{\surg}^{q,+}$ donnée par
  \[
  h_t = \tilde{f}^t\tilde{v}_t - \dd\sum_{n=0}^{q-1}\e^{2i\pi nt}u_n.
  \]
  De plus,
  \[
  h_t(\partial(\widetilde{\surg}^{q,+})) \subset
  \dd\bigcup_{n=0}^{q-1}\left(\e^{2i\pi nt}(i+\RR)\right) = P_q,
  \]
car sur une composante réelle du $n$-ième feuillet, $n\in\{0,\ldots,q-1\}$, on a $\tilde{v}_t\in\RR$, $\tilde{f}^t = \e^{2i\pi nt}$, $u_n = 0$ et $u_j = i$ si $j\neq n$, tandis que $\dd\sum_{\begin{subarray}{c} 
j=0 \\
j\neq n
\end{subarray}}^{q-1}\e^{2i\pi jt} = -\e^{2i\pi nt}$.

Comme $h_t$ est une application ouverte, nous avons $\partial(h_t(\widetilde{\surg}^{q,+}))\subset  h_t(\partial(\widetilde{\surg}^{q,+}))$, et comme $\widetilde{\surg}^{q,+}$ est compacte, l'image de $h_t$ est compacte. Prenons un disque $B$ dans $\CC$ centré en $0$ contenant l'image de $h_t$ et de rayon minimal. Notons $I$ un des points d'intersection du bord de ce disque avec le bord de l'image de $h_t$. Alors $I$ est un point d'intersection entre deux droites de $P_q$. En effet, d'une part $I$ est dans $h_t(\partial(\widetilde{\surg}^{q,+}))$ qui est inclus dans $P_q$, d'autre part si $I$ n'était pas à l'intersection de deux droites, alors on aurait un petit intervalle centré en $I$ sur la droite contenant $I$ qui serait inclus dans le bord de l'image de $h_t$. Ceci contredirait la minimalité du rayon du disque $B$. 

Notons $D$ et $D'$ les deux droites de $P_q$ s'intersectant en $I$. Comme $P_q$ ne contient qu'un nombre fini de droites, au voisinage de $I$ le bord de l'image de $h_t$ est inclus dans $D\cup D'$ et l'intérieur de l'image est contenue dans un des secteurs découpés par $D$ et $D'$ (voir Figure \ref{imageh}). 

  \begin{figure}[h]
    \centering
    \input{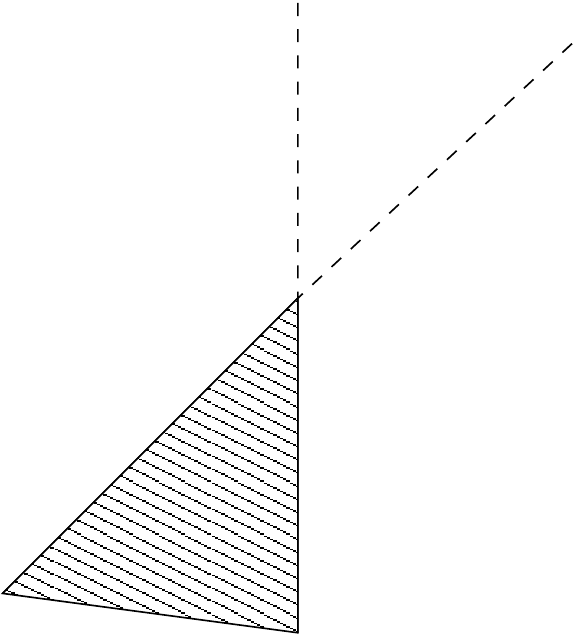_t}
    \caption{Image de $h_t$ au voisinage de $I$ (partie hachurée)}
    \label{imageh}
  \end{figure}

Prenons $n\in\{0,\ldots,q-1\}$ tel que $D = \e^{2i\pi nt}(i+\RR)$ et considérons la restriction $h_{n,t}$ de $h_t$ au feuillet $n$. Prenons un petit voisinage $V$ de $I$. Comme $h_{n,t}$ est une application ouverte, $V\cap \im(h_{n,t})$ est d'intérieur non vide. Le bord de l'image de $h_{n,t}$ au voisinage de $I$ n'est donc pas complètement inclus dans $D$. Ainsi, dans tout voisinage de $I$ il existe un point qui est dans $\partial(\im(h_{n,t}))$ mais pas dans $D$. On peut donc construire une suite de points $(I_r)_{r\in\NN}$ de $\partial(\im(h_{n,t}))\setminus D$ qui converge vers $I$. De plus, les antécédents de ces points par $h_{n,t}$ sont dans le bord du $n$-ième feuillet mais pas dans la partie réelle. Quitte à extraire, on obtient donc une suite de points du $n$-ième feuillet qui converge vers un point $x$ qui est dans l'intérieur de $\widetilde{\surg}^{q,+}$ et tel que $h_t(x) = I$. Ceci est en contradiction avec le fait que $h_t$ est une
  application ouverte sur $\widetilde{\surg}^{q,+}$.

  Le cas où $t$ n'est pas rationnel découle comme
  précédemment du Lemme \ref{lemmetexnik}.
\end{proof}

\subsubsection{Action des automorphismes réels sur le fibré déterminant}

Passons maintenant à l'étude de l'action des automorphismes réels sur les orientations du fibré déterminant. Démontrons tout d'abord le résultat suivant.

\begin{Proposition}\label{enonceaction}
Soit $(N,c_N)$ un fibré vectoriel complexe de rang $1$ sur $(\surg,c_\sur)$ de partie réelle non vide. Fixons une base symplectique réelle $(a_i,b_i)$ de $\hhz{\surg}$ et notons $\mathcal{B}$ la famille génératrice de $\raut{N}$ associée (voir \S \ref{modsig}). Nous avons alors les deux cas suivant.
\begin{itemize}
\item  Si la courbe $(\surg,c_\sur)$ est séparante, l'automorphisme $f_j\in\mathcal{B}$, $0\leq j\leq k-1$ renverse les orientations du fibré $\Det$ si et seulement si $(\RR N)_j$ est orientable, alors que les automorphismes $f_k,g_k,\ldots,f_{m+k-1},g_{m+k-1}\in\mathcal{B}$ les préservent toujours.

\item Si la courbe $(\surg,c_\sur)$ n'est pas séparante, l'automorphisme $f_j\in\mathcal{B}$, $0\leq j\leq k-1$ renverse les orientations du fibré $\Det$ si et seulement si $(\RR N)_j$ est orientable, alors que les automorphismes $f_k,\ldots,f_g\in\mathcal{B}$ les renversent toujours.
\end{itemize}
\end{Proposition}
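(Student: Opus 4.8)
L'idée est d'utiliser la Proposition \ref{conoyau} pour décrire explicitement, pour chaque $f_j$ (resp. $g_j$) de la famille $\mathcal{B}$, le conoyau d'un opérateur de Cauchy-Riemann réel et l'action de $f_j$ sur ce conoyau, puis d'en déduire le signe du déterminant du morphisme induit. Comme l'action sur les orientations ne dépend que de la classe d'homotopie de l'automorphisme et ne change pas si l'on remplace le fibré $(N,c_N)$ par un fibré isomorphe, le Lemme \ref{classification} permet de se ramener, composante par composante de $\RR\surg$, au fibré trivial $(\TCC,\conj)$ modifié par une transformation élémentaire changeant la première classe de Stiefel-Whitney de $(\RR N)_j$ là où c'est nécessaire; mais le point crucial est qu'en partant de l'opérateur $\DDJ$ sur $(\TCC,\conj)$, l'automorphisme $f_j$ envoie $\DDJ$ sur $\DDJ + \DBS{f_j} = \DB^1_{J,f_j}$, qui est à nouveau isomorphe au fibré holomorphe trivial : le chemin $t\mapsto \DB^t_{J,f_j}$, $t\in[0,1]$, relie donc $\DDJ$ à $f_j^*\DDJ$ dans $\rop[\TCC]$, et le signe cherché est le signe de la trivialisation de $\Det$ le long de ce chemin.

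Je procéderais comme suit. \textbf{Étape 1 :} pour $t\in\,]0,1[$ non entier, la Proposition \ref{conoyau} donne une base explicite de $H^1_{\DDJTL}(\surg,\TCC)_{+1}$, obtenue en retirant à la base du Lemme \ref{optriv} la (ou les deux) forme(s) associée(s) à $a_l$ (resp. $b_l$) et, dans le cas séparant avec $l\ge k$, en rajoutant $\DDJ(u)$. Le noyau est nul pour $t\notin\ZZ$. \textbf{Étape 2 :} on suit continûment une orientation de $\ddet(\DB^t_{J,f_j}) = \Lambda^{\max}(H^0)^* {}^{\!*}\!\otimes\ldots$ sur $]0,1[$ via cette base de conoyaux ; comme il n'y a pas de saut de dimension sur $]0,1[$, l'orientation se propage sans ambiguïté. \textbf{Étape 3 :} il reste à comparer cette orientation aux orientations de $\ddet(\DDJ)$ en $t=0$ et de $\ddet(f_j^*\DDJ) = \ddet(\DDJ)$ en $t=1$ en analysant le phénomène de dégénérescence aux extrémités (là où le noyau et le conoyau sautent de dimension). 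C'est ici qu'intervient la structure réelle : en $t=0$ comme en $t=1$ on récupère les formes manquantes $\DBS{f_j}$ (et éventuellement $u\DBS{f_j}$, ou $\DDJ(u)$), et le signe du recollement se lit, pour les $f_j$ avec $j\le k-1$, sur l'orientation de la droite réelle $(\RR\TCC)_{x_j}$ sur la composante $(\RR\surg)_j$ — d'où l'apparition du critère « $(\RR N)_j$ orientable », via la transformation élémentaire ; et pour les $f_j,g_j$ avec $j\ge k$, sur le comportement de la forme $\DBS{f_j}$ (resp. $u\DBS{f_j}$) le long d'une courbe globalement stable ou d'une paire conjuguée, où le Lemme \ref{tr} impose la valeur de l'invariant quadratique et fournit le signe voulu ($-1$ dans le cas non séparant, $+1$ dans le cas séparant).

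\textbf{L'obstacle principal} sera l'Étape 3, c'est-à-dire le calcul précis du signe de recollement aux deux extrémités du chemin $t\in[0,1]$, où le saut simultané de $\dim\ker$ et $\dim\coker$ de $0$ à $1$ (voire à $2$ dans le cas séparant avec $l\ge k$) oblige à une analyse locale soigneuse de la famille $\DB^t_{J,f_j}$ près de $t\in\{0,1\}$ : il faut identifier le « vecteur de dégénérescence » dans le noyau (essentiellement la fonction constante $1$, relevée convenablement) et la forme qui sort du conoyau, puis comparer leurs orientations induites aux orientations de référence, en tenant compte de l'action de $f_j$ qui multiplie par $f_j$ et déplace les pôles/zéros sur $\RR\surg$. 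Le cas séparant $j\ge k$, où il faut gérer la fonction auxiliaire $u$ et la forme $\DDJ(u)$ sur le revêtement cyclique comme dans la preuve de la Proposition \ref{conoyau}, demandera le plus de soin ; on conclura en observant que la contribution totale est paire, ce qui donne la préservation des orientations annoncée.
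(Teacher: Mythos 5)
Votre stratégie générale est bien celle du texte : suivre le chemin d'opérateurs $t\mapsto\DB_{J,f}^t$ reliant un opérateur à son image par $f$, utiliser la Proposition \ref{conoyau} pour décrire les conoyaux hors des murs, puis ramener le cas général au fibré trivial par transformations élémentaires (le facteur $\RR N_{x_j}$ apparaissant dans le Lemme \ref{lemshev} est bien ce qui produit le critère \og $(\RR N)_j$ orientable\fg, puisque $f_j$ est négative précisément sur $(\RR\surg)_j$). Mais votre \'Etape 3, que vous signalez vous-même comme l'obstacle principal, est exactement l'endroit où se trouve tout le contenu de la démonstration, et elle reste ouverte dans votre proposition : vous n'identifiez pas le mécanisme qui permet de calculer le signe au passage du mur. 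L'idée manquante est de stabiliser l'opérateur en ajoutant les directions du conoyau comme paramètres, c'est-à-dire de considérer les opérateurs $\Phi^t$ d'indice nul définis sur $L^{1,p}(\surg,\CC)\times\RR^{g-1}$, surjectifs hors des entiers et de déterminant canoniquement égal à celui de $\DB_{J,f}^t$; de vérifier que la traversée de mur est \emph{régulière}, c'est-à-dire que $\dot{\Phi}^0$ envoie isomorphiquement le noyau sur le conoyau (Lemme \ref{traverse}, qui se réduit au calcul $\dot{\Phi}^0(1,0,\ldots,0)=\DBS{f_l}$, etc.); puis d'appliquer la Proposition A.2.4 de McDuff--Salamon, qui affirme que le signe de la trivialisation du fibré déterminant le long du chemin est $(-1)^{\dim\ker\Phi^0}$. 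C'est cette parité ($1$ pour les $f_j$ avec $j\leq k-1$ et pour les $f_j$ avec $j\geq k$ dans le cas non séparant, $2$ pour les $f_j,g_j$ avec $j\geq k$ dans le cas séparant) qui donne les trois cas de l'énoncé; sans cet argument, le \og recollement aux extrémités\fg{} que vous décrivez n'est pas calculé.

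Deux remarques supplémentaires. D'une part, le choix du chemin $t\in[0,1]$, avec des murs aux deux extrémités, complique inutilement l'analyse : le texte prend le chemin de $\DB_{J,f}^{-1/2}$ à $f^*(\DB_{J,f}^{-1/2})=\DB_{J,f}^{1/2}$, qui ne rencontre qu'un seul mur, intérieur, en $t=0$, et aux extrémités duquel l'isomorphisme induit par $f$ entre les conoyaux est manifestement positif puisqu'il envoie la base de la Proposition \ref{conoyau} sur elle-même. D'autre part, votre recours au Lemme \ref{tr} pour traiter les $f_j,g_j$ avec $j\geq k$ est hors sujet ici : ce lemme sur les formes quadratiques des structures $Spin$ sert à l'interprétation topologique du signe (Exemple \ref{sncalcul} et Théorème \ref{enoncesn}), pas au calcul analytique direct de la Proposition \ref{enonceaction}, qui ne fait intervenir que la dimension du noyau de $\Phi^0$.
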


\begin{Rem}\label{casgz}
Le cas où le genre $g$ est nul se traite aisément. Nous avons dans ce cas un seul élément dans $\mathcal{B}$, et celui-ci est homotope à la fonction constante égale à $-1$. Or celle-ci préserve les orientations si et seulement si $\dim(H^0(\surg,N)_{+1})-\dim(H^1(\surg,N)_{+1}) = \deg(N) + 1$ est pair, donc si et seulement si $\RR N$ n'est pas orientable.
\end{Rem}

La démonstration de la Proposition \ref{enonceaction} se fait en deux temps. Nous commençons par traiter le cas du fibré trivial, puis nous utilisons des transformations élémentaires négatives pour nous ramener à ce cas.

\paragraph{Le cas du fibré trivial}

\begin{Proposition}\label{castr}
Soit $(\surg,c_\sur)$ une courbe de partie réelle non vide. Fixons une base symplectique réelle $(a_i,b_i)$ de $\hhz{\surg}$ et notons $\mathcal{B}$ la famille génératrice de $\raut{\TCC}$ associée (voir \S \ref{modsig}). Nous avons alors les deux cas suivant.
\begin{itemize}
\item  Si la courbe $(\surg,c_\sur)$ est séparante, l'automorphisme $f_j\in\mathcal{B}$, $0\leq j\leq k-1$ renverse les orientations du fibré $\Det[\TCC]$, alors que les automorphismes $f_k,g_k,\ldots,f_{m+k-1},g_{m+k-1}\in\mathcal{B}$ les préservent.

\item Si la courbe $(\surg,c_\sur)$ n'est pas séparante, tous les éléments de $\mathcal{B}$ renversent les orientations du fibré $\Det[\TCC]$.
\end{itemize}
\end{Proposition}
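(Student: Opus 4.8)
The plan is to determine, for each generator $f\in\mathcal{B}$, the sign of the action of $f$ on the orientations of $\Det[\TCC]$ by radiating from $\DDJ$ along the path $\DB_{J,f}^{t}=\DDJ+t\DBS{f}$, $t\in[0,1]$, inside $\ropj[\TCC]$. This path joins $\DDJ$ to $\DB_{J,f}^{1}=f^{*}\DDJ$ (recall that $f$, viewed in $\raut{\TCC}=\RR\CCC^{\infty}(\surg,\CC^{*})$, fixes $J$ and acts by $1\mapsto f^{-1}$ on sections and $\alpha\mapsto f^{-1}\alpha$ on $(0,1)$-forms). Since $\Det[\TCC]$ is orientable I fix the orientation $o_{0}=1\otimes(\gamma_{1}\wedge\dots\wedge\gamma_{g})^{*}$ of $\ddet(\DDJ)$, where $\ker\DDJ=\RR\langle 1\rangle$ and $(\gamma_{i})$ is the basis of $\coker\DDJ$ furnished by the Lemme \ref{optriv}; then $s_{\TCC}(f)$ is the sign comparing $f_{*}o_{0}$ (an orientation of $\ddet(f^{*}\DDJ)$) with the parallel transport of $o_{0}$ along $t\mapsto\DB_{J,f}^{t}$.

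For $g=0$ the statement is the Remarque \ref{casgz} (the unique generator is homotopic to $-1$, which reverses the orientation of $\Det[\TCC]$ since $\dim\ker\DDJ+\dim\coker\DDJ=1$ is odd, cf. the proof of the Lemme \ref{refl}), so I assume $g\geq 1$. The elementary fact that makes the transport tractable is that the tubular neighbourhoods $A_{j}$ are pairwise disjoint and $f=f_{l}$ equals $1$ off $A_{l}$: since the forms $\DBS{f_{j}}$ and $u\DBS{f_{j}}$ $(j\neq l)$ are supported in $A_{j}$, and since $\DDJ(u)$ enters $\coker\DB_{J,f_{l}}^{t}$ only when $l\geq k$ — in which case $A_{l}$ avoids $\RR\surg$, hence the support of $\DDJ(u)$ — every one of the forms that the Proposition \ref{conoyau} uses to span $\coker\DB_{J,f_{l}}^{t}$ is literally fixed by $\alpha\mapsto f_{l}^{-1}\alpha$. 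Thus the cokernel bases at the two endpoints and along the interior of the path get identified, and everything reduces to a bookkeeping of signs.

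To perform the transport I stabilise $\DB_{J,f_{l}}^{t}$ near $t=0$ by $V=\langle\gamma_{1},\dots,\gamma_{g}\rangle$ and near $t=1$ by $V_{1}=f_{l}^{-1}V$; in both cases the stabilised operator is surjective with one-dimensional kernel near the relevant integer, and these kernels are spanned by explicit continuous families obtained from the constant function $1/t$ (using $\DB_{J,f_{l}}^{t}(1/t)=\DBS{f_{l}}$) near $t=0$ and from $f_{l}^{-1}$ (using $\DB_{J,f_{l}}^{t}(f_{l}^{-1})=(t-1)f_{l}^{-1}\DBS{f_{l}}$) near $t=1$. The Proposition \ref{conoyau} provides, for $t\in(0,1)$, a constant basis of $\ddet(\DB_{J,f_{l}}^{t})$, which is what lets me glue the two stabilisations across the open interval. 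Carrying out and comparing the two computations shows: when $f_{l}$ is attached to a single curve — i.e. $1\leq l\leq k-1$, or $l\geq k$ in the non-separating case — exactly the form $\DBS{f_{l}}$ leaves the cokernel as $t$ moves off $0$ and the comparison yields $s_{\TCC}(f_{l})=1$, that is, $f_{l}$ reverses the orientations; when $f_{l}$ (or $g_{l}$) is attached to a conjugate pair of curves — i.e. $l\geq k$, separating case — both $\DBS{f_{l}}$ and $u\DBS{f_{l}}$ leave the cokernel while $\DDJ(u)$ enters it, which contributes one extra transposition, hence an extra sign, so that $s_{\TCC}(f_{l})=s_{\TCC}(g_{l})=0$ and $f_{l},g_{l}$ preserve the orientations.

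It remains to treat $f_{0}$: either one runs the same argument for it, as it is attached to the real component $(\RR\surg)_{0}$ exactly like the $f_{j}$ with $1\leq j\leq k-1$, or one invokes the relations $-1=\prod_{j=0}^{k-1}f_{j}$ (separating) and $-1=\prod_{j=0}^{g}f_{j}$ (non-separating) of the Remarque \ref{snrem}, together with $s_{\TCC}(-1)=g+1\bmod 2$, which holds because $-1$ fixes $\DDJ$ and acts on $\ddet(\DDJ)$ by $(-1)^{\dim\ker\DDJ+\dim\coker\DDJ}$, the exponent being $\equiv g+1$ by Riemann–Roch. In either case $s_{\TCC}(f_{0})=1$, matching the statement. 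The main obstacle is the third step: one has to fix a sign convention for the determinant of a stabilised Fredholm operator, track with care the continuous kernel families of the two stabilisations through $(0,1)$, and — this is exactly where the separating/non-separating dichotomy of the statement comes from — account correctly for the sign produced when $u\DBS{f_{l}}$ is traded for $\DDJ(u)$ inside the cokernel.
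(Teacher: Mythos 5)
Votre démonstration est correcte et suit essentiellement la même stratégie que celle de l'article : elle repose sur la Proposition \ref{conoyau}, sur le fait que la multiplication par $f_l^{-1}$ fixe les formes engendrant les conoyaux (supports disjoints), et sur une traversée de mur régulière explicitée par les familles $1/t$ et $f_l^{-1}$ (c'est exactement le contenu du Lemme \ref{traverse}). La seule différence réelle est le choix du chemin : l'article transporte l'orientation de $\DB_{J,f}^{-1/2}$ à $\DB_{J,f}^{1/2}$ en traversant un unique mur intérieur en $t=0$, ce qui permet d'invoquer directement la Proposition \ref{MDS} (le signe est $(-1)^{\dim\ker\Phi^0}$) puisque les extrémités sont régulières et que $f$ y identifie positivement les bases des conoyaux; vous placez au contraire les murs aux extrémités $t=0$ et $t=1$, ce qui vous oblige à recoller deux stabilisations et à suivre le signe venant de $\DB_{J,f_l}^t(f_l^{-1})=(t-1)f_l^{-1}\DBS{f_l}$ contre $\DB_{J,f_l}^t(1)=t\DBS{f_l}$ — c'est faisable, mais c'est précisément l'étape que vous laissez à l'état d'esquisse, et dans le cas d'une paire conjuguée l'argument correct n'est pas une \og transposition supplémentaire\fg{} mais le fait que le mur est de dimension $2$ (noyau de l'opérateur stabilisé engendré par $(1,0,\dots,0)$ et $(u,0,\dots,-1,\dots,0)$), d'où le signe $(-1)^2=+1$. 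Le traitement de $f_0$ via la relation $-1=\prod f_j$ et $s(-1)=1+g$ est valable et rejoint la Remarque \ref{snrem}.
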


Fixons une structure complexe $J\in\RR J(\surg)$. La Proposition \ref{conoyau} donne une trivialisation du fibré $\Det[\TCC]$ au-dessus des intervalles ouverts $(\DB^t_{J,.})_{t\in ]n,n+1[}$, $n\in\ZZ$. Toutefois, sous l'action d'un des automorphismes $f$ considérés dans l'énoncé, un intervalle ouvert $(\DB^t_{J,f})_{t\in ]n,n+1[}$, $n\in\ZZ$, est envoyé sur l'intervalle consécutif $(\DB^t_{J,f})_{t\in ]n+1,n+2[}$. Avant de démontrer la Proposition \ref{castr}, commençons par décrire le rapport entre les trivialisations du fibré $\Det[\TCC]$ au-dessus de ces intervalles consécutifs. Nous introduisons pour cela les opérateurs suivants:
\begin{itemize}
\item[Si $(\surg,c_\sur)$ n'est pas séparante:]
  \begin{multline*}
    \Phi_{f_l,0}^t: (v,\lambda_1,\ldots,\widehat{\lambda_l},\ldots,\lambda_{g})\in L^{1,p}(\surg,\CC)\times\RR^{g-1} \\ \mapsto  \DDJTL(v) + \dd\sum_{\begin{subarray}{c}n=0\\n\neq l\end{subarray}}^g \lambda_n\DBS{f_n}\in L^p(\surg,\Lambda_J^{0,1}(\surg))
  \end{multline*}
\item[Si $(\surg,c_\sur)$ est séparante et $1\leq l\leq k-1$:]
  \begin{multline*}
    \Phi_{f_l,1}^t: (v,\lambda_1,\ldots,\widehat{\lambda_l},\ldots,\lambda_{g})\in L^{1,p}(\surg,\CC)\times\RR^{g-1} \\
 \mapsto \DDJTL(v) + \dd\sum_{\begin{subarray}{c}n=0\\n\neq l\end{subarray}}^{k-1} \lambda_n\DBS{f_n}\hspace{4cm} \\
+ \dd\sum_{n=k}^{m+k-1} \left(\lambda_n\DBS{f_n} + \lambda_{n+m}u\DBS{f_n}\right) \in L^p(\surg,\Lambda_J^{0,1}(\surg))
  \end{multline*}
\item[Si $(\surg,c_\sur)$ est séparante et $k\leq l\leq k+m-1$:]
  \begin{multline*}
    \Phi_{f_l,1}^t: (v,\lambda_1,\ldots,\widehat{\lambda_l},\ldots,\lambda_{g})\in L^{1,p}(\surg,\CC)\times\RR^{g-1}\\
 \mapsto  \DDJTL(v) + \dd\sum_{n=0}^{k-1} \lambda_n\DBS{f_n} \hspace{8cm}\\
 + \dd\sum_{\begin{subarray}{c}n=k\\n\neq l\end{subarray}}^{m+k-1}\left(\lambda_n\DBS{f_n} + \lambda_{n+m}u\DBS{f_n}\right)+\lambda_{l+m}\DDJ(u)\in L^p(\surg,\Lambda_J^{0,1}(\surg))
  \end{multline*}
  et
  \begin{multline*}
    \Phi_{g_l,1}^t: (v,\lambda_1,\ldots,\widehat{\lambda_l},\ldots,\lambda_{g})\in L^{1,p}(\surg,\CC)\times\RR^{g-1} \\
\mapsto  \DDJTG(v) + \dd\sum_{n=0}^{k-1} \lambda_n\DBS{f_n}  \hspace{8cm}\\
+ \dd\sum_{\begin{subarray}{c}n=k\\n\neq l\end{subarray}}^{m+k-1}\left(\lambda_n\DBS{g_n} + \lambda_{n+m}u\DBS{g_n}\right)+ \lambda_{l+m}\DDJ(u)\in L^p(\surg,\Lambda_J^{0,1}(\surg)).
  \end{multline*}
\end{itemize}
Ceux-ci sont Fredholm d'indice $0$ et sont
surjectifs lorsque $t$ n'est pas entier d'après la Proposition \ref{conoyau}. Ces familles d'opérateurs
rencontrent toutes des murs lorsque $t$ est entier: leurs noyaux
deviennent d'un coup non triviaux, de dimension $1$ pour les deux premiers et de dimension $2$ pour les deux derniers.

D'autre part, les déterminants des opérateurs $\Phi^t$ sont par
définition canoniquement isomorphes à ceux des opérateurs $\DDJ^t$ correspondant. On a donc (dans tous les cas) pour $t$ non entier
\[
\ddet(\DDJ^t) = \ddet(\Phi^t) = \RR.
\]

Le Lemme suivant permet de décrire les trivialisations du fibré $\Det[\TCC]$ au dessus des droites $\DDJ^t$.
\begin{Lemme}\label{traverse}
  Les familles d'opérateurs $(\Phi^t_{.,.})_{t\in\RR}$ ont des traversées de murs
  régulières, c'est-à-dire que leurs dérivées par rapport à $t$ aux
  valeurs entières fournissent des isomorphismes entre leurs noyaux et
  leurs conoyaux.
\end{Lemme}

\begin{proof}
Montrons le par exemple pour $t = 0$. Si $(\surg,c_\sur)$ n'est pas séparante et $1\leq l\leq g$ (resp. si $(\surg,c_\sur)$ est séparante et $1\leq l\leq k-1$), alors le noyau de $\Phi_{f_l,0}^0$ (resp. $\Phi^t_{f_l,1}$) est engendré par $(1,0,\ldots,0)$ et son conoyau par $\DBS{f_l}$ d'après le Lemme \ref{optriv}. Or
\[
\begin{array}{l}
\dot{\Phi}_{f_l,0}^0(1,0,\ldots,0) = \DBS{f_l}\\
\dot{\Phi}_{f_l,1}^0(1,0,\ldots,0) = \DBS{f_l}.
\end{array}
\]
Ce qui conclut ce cas.

 Si $(\surg,c_\sur)$ est séparante et $k\leq l\leq k+m-1$, alors le noyau de $\Phi_{f_l,1}^0$ est engendré par $(1,0,\ldots,0)$ et $(u,0\ldots,0,-1,0,\ldots,0)$ (le $-1$ est en position $l+m$) et son conoyau par $\DBS{f_l}$ et $u\DBS{f_l}$ d'après le Lemme \ref{optriv}. Or
\[
\begin{array}{l}
\dot{\Phi}_{f_l,1}^0(1,0,\ldots,0) = \DBS{f_l}\\
\dot{\Phi}_{f_l,1}^0(u,0,\ldots,0,-1,0,\ldots,0) = u\DBS{f_l}.
\end{array}
\]
Ce qui termine la démonstration.
\end{proof}

Le Lemme \ref{traverse} nous permet d'énoncer la Proposition suivante, tirée de \cite{MDS} (Proposition
A.2.4 p.499), qui nous renseigne sur la trivialisation du fibré
$\Det[\TCC]$ au dessus des familles d'opérateurs voulues.
\begin{Proposition}[\cite{MDS}]\label{MDS}
  Soient $t_1 < t_2$ deux réels non entiers séparés par un unique
  entier $t_0$. Toute trivialisation du fibré $\Det[\TCC]$
  au-dessus du chemin $\DDJ^t$ fournit un isomorphisme
  $\ddet(\DDJ^{t_1}) = \ddet(\Phi^{t_1}) =
  \RR\rightarrow\ddet(\DDJ^{t_2}) = \ddet(\Phi^{t_2}) = \RR$ de signe
  donné par la parité de la dimension du noyau de $\Phi^{t_0}$.\qed
\end{Proposition}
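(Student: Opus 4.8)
The plan is to reduce the statement to a finite-dimensional model by stabilisation and then read off the sign from the regular wall-crossing furnished by Lemma~\ref{traverse}; this is the standard argument behind Proposition A.2.4 of \cite{MDS}, carried out in the present notation. First I would fix the integer $t_0$ and choose a finite-dimensional subspace $V$ of the target space $L^p(\surg,\Lambda_J^{0,1}\surg)_{+1}$ complementary to $\im\Phi^{t_0}$. Since $\Phi^{t_0}$ is Fredholm of index $0$, one has $\dim V=\dim\coker\Phi^{t_0}=\dim\ker\Phi^{t_0}=:r$. For $t$ near $t_0$ the stabilised map $(x,\xi)\mapsto\Phi^t(x)+\xi$, from the source of $\Phi^t$ augmented by $V$ to the target, is onto; its kernel $K_t$ is a real vector bundle of rank $r$ over a neighbourhood of $t_0$, and there is a canonical isomorphism $\ddet(\Phi^t)\cong\Lambda^{\max}_\RR K_t\otimes(\Lambda^{\max}_\RR V)^*$. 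When $\Phi^t$ is invertible — in particular near $t_0$ for $t\ne t_0$, and at $t_1$ and $t_2$ — the projection $K_t\to V$, $(x,\xi)\mapsto\xi$, is an isomorphism, and this reproduces the canonical identification $\ddet(\Phi^t)=\ddet(\DDJ^t)=\RR$ appearing in the statement, read as ``determinant of $K_t\to V$ in fixed bases of $\ker\Phi^{t_0}$ and $V$''.

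Next I would compute $K_t$ to first order across $t_0$. Since $\DDJ^t=\DDJ+t\,\DBS{f}$ is affine in $t$, the operator $\Phi^t$ is affine in $t$ with constant derivative $\dot\Phi:=\dot\Phi^{t_0}$. Splitting the source as $\ker\Phi^{t_0}\oplus W$ and the target as $\im\Phi^{t_0}\oplus V$, the implicit function theorem solves the $\im\Phi^{t_0}$-component of $\Phi^t(x_0+w)+\xi=0$ for $w=w(t,x_0)$, linear in $x_0\in\ker\Phi^{t_0}$ with $w(t_0,\cdot)=0$, while the $V$-component then gives $\xi=\xi(t,x_0)$ with $\xi(t,x_0)=-(t-t_0)\,\pi_V(\dot\Phi\,x_0)+O((t-t_0)^2)$. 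By Lemma~\ref{traverse}, the restriction of $\pi_V\circ\dot\Phi$ to $\ker\Phi^{t_0}$ is, under the identification $\coker\Phi^{t_0}\cong V$, exactly the isomorphism $\dot\Phi^{t_0}\colon\ker\Phi^{t_0}\xrightarrow{\sim}\coker\Phi^{t_0}$; write $A$ for it. Thus $x_0\mapsto\bigl(x_0+w(t,x_0),\xi(t,x_0)\bigr)$ trivialises $K_t\cong\ker\Phi^{t_0}$ over the whole neighbourhood of $t_0$, and in this trivialisation the projection $K_t\to V$ is $-(t-t_0)A+O((t-t_0)^2)$.

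Finally I would conclude. Let $\sigma$ be a nowhere-vanishing section of $\Det[\TCC]=\Lambda^{\max}_\RR K_t\otimes(\Lambda^{\max}_\RR V)^*$ over $[t_1,t_2]$, i.e.\ the given trivialisation. In the fixed bases above, $\sigma(t)$ becomes a continuous nowhere-zero real function $\mu(t)$, hence of constant sign on $[t_1,t_2]$. Under the canonical identification $\ddet(\Phi^t)=\RR$ valid for $t\ne t_0$, the element $\sigma(t)$ corresponds to $\mu(t)\det\bigl(-(t-t_0)A\bigr)=\mu(t)\,(t_0-t)^r\det A$ in these bases, whose sign equals $\operatorname{sign}(\mu)\operatorname{sign}(\det A)$ times $+1$ when $t<t_0$ and times $(-1)^r$ when $t>t_0$. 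Therefore the sign of the induced isomorphism $\ddet(\DDJ^{t_1})=\RR\to\ddet(\DDJ^{t_2})=\RR$ is $(-1)^r=(-1)^{\dim\ker\Phi^{t_0}}$, which is precisely the parity of $\dim\ker\Phi^{t_0}$, as claimed.

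The delicate points are the by-now standard ones in the theory of determinant line bundles of Fredholm families — existence and continuity of the finite-dimensional model $K_t$, naturality of the stabilisation isomorphism, and the canonical identification $\ddet=\RR$ at an invertible operator — all of which may simply be quoted from \cite{MDS}. The one genuinely local input, which is where the argument earns its conclusion, is that the leading term of $K_t\to V$ is $(t_0-t)\,\dot\Phi^{t_0}|_{\ker\Phi^{t_0}}$: it is exactly the regularity of the wall-crossing (Lemma~\ref{traverse}) that makes this term invertible, so that the exponent of $(-1)$ is $\dim\ker\Phi^{t_0}$ itself and not merely an upper bound for it.
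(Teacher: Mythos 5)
Your proof is correct, and it is essentially the only proof available: the paper itself gives no argument for this Proposition but simply quotes it from \cite{MDS} (Proposition A.2.4), and your stabilisation argument — building the finite-dimensional model $K_t$, identifying the leading term of $K_t\to V$ across the wall as $(t_0-t)\,\dot\Phi^{t_0}|_{\ker\Phi^{t_0}}$ via the regularity of Lemme \ref{traverse}, and reading off the sign $(-1)^{\dim\ker\Phi^{t_0}}$ — is precisely the standard argument behind that reference. The only point worth making explicit is that the local computation near $t_0$ suffices because the canonical identifications $\ddet(\Phi^t)=\RR$ have locally constant sign on each of $[t_1,t_0)$ and $(t_0,t_2]$, where $\Phi^t$ is invertible; this is implicit in your write-up and unproblematic.
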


\begin{proof}[Démonstration de la Proposition \ref{castr}]
  Fixons une structure complexe $J\in\RR J(\surg)$. Supposons que $g$ est non nul, le cas de la sphère étant traité dans la Remarque \ref{casgz}. Notons $f$ un des automorphismes considérés dans l'enoncé. Prenons l'opérateur de Cauchy-Riemann réel $\DB_{J,f}^{-\frac{1}{2}}$ sur
$(\TCC,conj)$. En agissant sur cet opérateur, l'automorphisme $f\in
\RR\CCC^{\infty}(\surg,\CC^*)$ fournit d'après la formule de Leibniz
l'opérateur
\[
f^*(\DB_{J,f}^{-\frac{1}{2}}) = \DB + \frac{1}{2}\dd\frac{\DB(f)}{f} = \DB_{J,f}^{\frac{1}{2}},
\]
et un isomorphisme
\[
\label{isodet}
\tag{$*$}
\ddet(\DB_{J,f}^{-\frac{1}{2}}) = \ddet(\Phi_{f,\varepsilon(\surg)}^{-\frac{1}{2}}) = \RR\rightarrow\ddet(\DB_{J,f}^{\frac{1}{2}}) =
\ddet(\Phi_{f,\varepsilon(\surg)}^{\frac{1}{2}}) = \RR.
\]
Comme $f$ envoie la base de $H^1_{\DB_{J,f}^{-\frac{1}{2}}}(\surg,\TCC)_{+1}$ donnée dans la Proposition \ref{conoyau} sur la base de $H^1_{\DB_{J,f}^{\frac{1}{2}}}(\surg,\TCC)_{+1}$ donnée de la même façon et que $\Phi_{f,\varepsilon(\surg)}^{-\frac{1}{2}}$ et $\Phi_{f,\varepsilon(\surg)}^{\frac{1}{2}}$ sont construits à l'aide de ces bases, l'isomorphisme (\ref{isodet}) est positif. D'après la Proposition \ref{MDS}, il ne coïncide avec celui induit par la trivialisation du fibré déterminant que lorsque la dimension du noyau de $\Phi_{f,\varepsilon(\surg)}^0$ est paire.

Nous retrouvons ainsi les différents cas mentionnés dans l'énoncé de la Proposition.
\begin{itemize}
\item Si $(\surg,c_\sur)$ est séparante, les noyaux des $\Phi^0$ induis par les automorphismes $f_k,g_k,\ldots,f_{k+m-1},g_{k+m-1}$ sont tous de dimension $2$. Ces automorphismes préservent donc les orientations de $\Det[\TCC]$.
\item Si $(\surg,c_\sur)$ n'est pas séparante, le noyau de $\Phi_{f_j,1}^0$, $j\in\{k,\ldots,g\}$, est de dimension $1$. Un automorphisme $f_j$, $j\in\{k,\ldots,g\}$ renverse donc les orientations de $\Det$.
\item Le noyau de $\Phi_{f_j,1}^0$, $j\in\{0,\ldots,k-1\}$, est de dimension $1$. L'automorphisme $f_j$, $j\in\{0,\ldots,k-1\}$ renverse donc les orientations de $\Det[\TCC]$.
\end{itemize}
\end{proof}

\paragraph{Transformations élementaires}\label{trans}

Nous considérons maintenant $(N,c_N)$ un fibré en droites complexes muni d'une structure réelle sur $(\surg,c_\sur)$, que l'on suppose de partie réelle non vide. Fixons une structure complexe $J\in\RR J(\surg)$. Prenons un opérateur de Cauchy-Riemann réel $\DB\in\ropj$. Notons $\NF$ le faisceau des sections holomorphes de $(N,\DB)$.

\begin{Definition}\label{transfoelem}
  La transformation élémentaire réelle négative en $x\in\RR\surg$ du faisceau $\NF$ est le faisceau localement libre $\NF_{-x}$ muni d'une structure réelle défini par la suite exacte
\[\label{transreel}
0\rightarrow \NF_{-x} \rightarrow \NF \xrightarrow{ev_x} N_x \rightarrow 0.\tag{$*$}
\]
Le faisceau $\NF_{-x}$ est de rang un, de degré $\deg(N)-1$, et sa partie réelle est de première classe de Stiefel-Whitney $w_1(\RR N) - [\RR\surg]_x^*$.

Si $\{z,\overline{z}\}$ est une paire de points complexes conjugués de $\surg$, alors la transformation élémentaire réelle négative en $\{z,\overline{z}\}$ du faisceau $\NF$ est le faisceau localement libre $\NF_{-z-\overline{z}}$ muni d'une structure réelle défini par la suite exacte
\[\label{transcompl}
0\rightarrow \NF_{-z-\overline{z}} \rightarrow \NF \xrightarrow{ev_{z,\overline{z}}} N_{z}\oplus N_{\overline{z}} \rightarrow 0.\tag{$**$}
\]
Le faisceau $\NF_{-z-\overline{z}}$ est de rang un, de degré $\deg(N)-2$, et sa partie réelle est de première classe de Stiefel-Whitney $w_1(\RR N)$.
\end{Definition}

Si $(\mathcal{F},c_{\mathcal{F}})$ est un faisceau holomorphe muni d'une structure réelle sur $(\surg,c_\sur)$, l'involution $c_{\mathcal{F}}$ induit des involutions sur $H^0(\surg,\mathcal{F})$ et $H^1(\surg,\mathcal{F})$ et on notera avec des indices $+1$ leurs espaces propres associés à la valeur propre $1$. On écrira aussi $\ddet(\mathcal{F}) = \Lambda^{\max}\left(H^0(\surg,\mathcal{F})_{+1}\right)\otimes \Lambda^{\max}\left(H^1(\surg,\mathcal{F})_{+1}\right)^*$.

\begin{Lemme}\label{lemtrans}
Si $\NF_{-x}$ est la transformation élémentaire réelle négative en $x\in\RR\surg$ de $\NF$, alors on a un isomorphisme canonique
\[
\ddet(\NF) = \ddet(\NF_{-x})\otimes \RR(N_x).
\]  
Si $\NF_{-z-\overline{z}}$ est la transformation élémentaire réelle négative en $\{z,\overline{z}\}$ de $\NF$, alors on a un isomorphisme canonique
\[
\ddet(\NF) = \ddet(\NF_{-z-\overline{z}})\otimes \det(\Delta_z),
\]  
où
\[
\Delta_z = \{(v,c_N(v)) \in N_z\oplus N_{\overline{z}}\}.
\]
\end{Lemme}

\begin{proof}
  Ces deux isomorphismes proviennent directement des suites exactes longues de cohomologie associées aux suites exactes (\ref{transreel}) et (\ref{transcompl}).
\end{proof}

Après une transformation élémentaire négative en $x\in\RR\surg$ du faisceau $\NF$ nous retrouvons un fibré en droites holomorphe réel $(N_{-x},\DB_{-x},c_{N,-x})$ dont le faisceau des sections holomorphes est naturellement isomorphe au faisceau $\NF_{-x}$ de la façon suivante. Prenons une carte locale holomorphe $(U,\xi)$ de $\surg$ centrée en $x$ et $\ZZ/2\ZZ$-équivariante. Le fibré $N_{-x}$ est alors obtenu en recollant $N_{|(\surg\setminus\{x\})}$ et $N_{|U}$  par l'application de recollement
\[\label{recoll}
\begin{array}{c c c}
  N_{|U\cap(\surg\setminus\{x\})} & \rightarrow & N_{|(\surg\setminus\{x\})\cap U}\\
  (\xi,v) & \mapsto & (\xi, \xi v).
\end{array}\tag{$*$}
\] 
Nous pouvons faire de même avec une transformation en une paire complexe conjuguée $\{z,\overline{z}\}$ pour obtenir un fibré holomorphe $(N_{-z-\overline{z}},\DB_{-z-\overline{z}},c_{N,-z-\overline{z}})$.

Une fois le fibré $N_{-x}$ fixé, nous avons deux inclusions naturelles $\ZZ/2\ZZ$-équivariantes (voir aussi \cite{Shev})
\[
\begin{array}{c}
  i_{-x} : L^{k,p}(\surg,N_{-x}) \rightarrow L^{k,p}(\surg,N)_{-x}\\
  j_{-x} : L^{k-1,p}(\surg,\Lambda^{0,1}\surg\otimes N_{-x}) \rightarrow L^{k-1,p}(\surg,\Lambda^{0,1}\surg\otimes N),
\end{array}
\]
où $k\geq 1$, $p>2$ et $L^{k,p}(\surg,N)_{-x}$ désigne les éléments de $L^{k,p}(\surg,N)$ qui s'annulent en $x$. On remarque que si $\DB'$ est un autre opérateur de Cauchy-Riemann sur $N$, alors sa restriction à $L^{k,p}(\surg,N_{-x})$ à travers $i_{-x}$ est à valeurs $L^{k-1,p}(\surg,\Lambda^{0,1}\surg\otimes N_{-x})$. En effet, $s\in L^{k,p}(\surg,N)_{-x}$ est dans l'image de $i_{-x}$ si et seulement si il existe $s'\in L^{k,p}(\surg,N)$ tel qu'on ait $s = \xi s'$ au voisinage de $x$. Donc $\DB'(s) = \DB'(\xi s') = \xi\DB'(s') \in j_{-x}\left(L^{k-1,p}(\surg,\Lambda^{0,1}\surg\otimes N_{-x})\right)$ car $\xi$ est holomorphe. On a l'analogue pour les transformations en deux points complexes conjugués. On obtient donc deux applications
\[
\begin{array}{c}
t_{-x} : \ropj \rightarrow \ropj[N_{-x}]\\
\text{et } t_{-z-\overline{z}} : \ropj \rightarrow \ropj[N_{-z-\overline{z}}].
\end{array}
\]
 De plus, l'injection $i_{-x}$ (resp. $i_{-z-\overline{z}}$) induit un isomorphisme entre le faisceau des sections holomorphes de $(N_{-x},t_{-x}(\DB'))$ (resp. de $(N_{-z-\overline{z}},t_{-z-\overline{z}}(\DB'))$) et le faisceau des sections holomorphes de $(N,\DB')$ qui s'annulent en $x$ (resp. en $z$ et $\overline{z}$).

Ces fibrés $N_{-x}$ et $N_{-z-\overline{z}}$ ne sont pas canoniquement définis. Toutefois les isomorphismes $i_{-x}$ et $i_{-z-\overline{z}}$ nous donnent avec le Lemme \ref{lemtrans} (voir aussi \cite{Shev}, Lemma 2.4.1),

\begin{Lemme}\label{lemshev}
 Soit $(N_{-x},c_{N,-x})$ (resp. $(N_{-z-\overline{z}},c_{N,-z-\overline{z}})$) un fibré vectoriel complexe muni d'une structure réelle associé à une transformation élémentaire réelle négative de $N$ en $x\in\RR\surg$ (resp. en $\{z,\overline{z}\}$). Nous avons des isomorphismes canoniques 
\[
\begin{array}{c}
\Det = \left((t_{-x})^*\Det[N_{-x}]\right)\otimes \RR(N_x)\\
\Det = \left((t_{-z-\overline{z}})^*\Det[N_{-z-\overline{z}}]\right)\otimes \det(\Delta_z).
\end{array}
\]\qed
\end{Lemme}

Nous pouvons de plus déterminer la fibre de $N_{-x}$ (resp. $N_{-z-\overline{z}}$) au-dessus de $x$ (resp. $z$ et $\overline{z}$).

\begin{Lemme}\label{lemfibre}
  Les injections $i_{-x}$ et $i_{-z-\overline{z}}$ induisent naturellement des isomorphismes
\[
\begin{array}{c}
(N_{-x})_x = T_x^*\surg\otimes N_x,\\
(N_{-z-\overline{z}})_z = T_z^*\surg\otimes N_z ,\\
 \text{et } (N_{-z-\overline{z}})_{\overline{z}} = T_{\overline{z}}^*\surg\otimes N_{\overline{z}}).\\
\end{array}
\]
\end{Lemme}

\begin{proof}
Nous faisons la démonstration dans le cas d'une transformation élémentaire en un point réel, l'autre cas étant analogue. Fixons un opérateur $\DB\in\ropj$. Prenons un élément $v$ dans $(N_{-x})_x$ et considérons une section locale holomorphe $s_v$ de $(N_{-x},t_{-x}(\DB))$ définie sur un voisinage de $x$ et telle que $s_v(x) = v$. Alors $i_{-x}(s_v)$ est une section locale holomorphe de $(N,\DB)$ qui s'annule au point $x$. On peut donc définir sa dérivée en $x$ $\nabla_x(i_{-x}(s_v)) = pr_x\circ \d_x(i_{-x}(s_v)) \in T^*_x\surg\otimes N_x$, où $pr_x : T_{(x,0)} N \rightarrow N_x$ est la projection parallèlement au tangent à la section nulle de $N$.

Vérifions tout d'abord que si $\DB'\in\ropj$ est un autre opérateur et $s'_v$ est une section locale holomorphe de $(N_{-x},t_{-x}(\DB'))$ valant $v$ en $x$, on a $\nabla_x(i_{-x}(s_v)) = \nabla_x(i_{-x}(s'_v))$. Supposons dans un premier temps que $v$ est non nul. Il existe alors une fonction $f$ à valeurs complexes définie sur un petit voisinage de $x$, telle que $s'_v = f s_v$. En particulier $f(x) = 1$. De plus, $\d_x(i_{-x}(s'_v)) = \d_x(f i_{-x}(s_v)) = i_{-x}(s_v)(x)\d_x(f) + f(x)\d_x(i_{-x}(s_v)) = \d_x(i_{-x}(s_v))$. Donc $\nabla_x(i_{-x}(s_v)) = \nabla_x(i_{-x}(s'_v))$. Si $v$ est nul, une section $s_0$ s'écrit $f s$ où $s$ est une trivialisation locale de $N_{-x}$ au voisinage de $x$ et $f$ est une fonction à valeurs complexes définie sur un petit voisinage de $x$ et s'annulant en $x$. On trouve alors $\d_x(i_{-x}(s_0)) = 0$, donc $\nabla_{x}(i_{-x}(s_0)) = 0$.

On obtient donc une application linéaire
\[
\label{nabl}
\begin{array}{c c c}
  (N_{-x})_x & \rightarrow & T_x^*\surg\otimes N_x\\
   v        & \mapsto     & \nabla_x (i_{-x}s_v),
\end{array}
\tag{$\nabla$}
\]
bien définie indépendamment du choix de l'opérateur et de $s_v$.
Vérifions maintenant que celle-ci est injective. Prenons $\tilde{s}$ une section locale holomorphe de $(N,\DB)$ ne s'annulant pas au voisinage de $x$. Alors, il existe $s$ une section locale de $(N_{-x},t_{-x}(\DB))$ telle que $i_{-x}(s) = \xi \tilde{s}$. Donc $\nabla_x(i_{-x}(s)) = pr_x(\d_x\xi \tilde{s})\neq 0$. Comme $(N_{-x})_{x}$ est de dimension $1$, on en conclut que l'application (\ref{nabl}) est injective. Par égalité des dimensions, c'est un isomorphisme.
\end{proof}

\begin{proof}[Démonstration de la Proposition \ref{enonceaction}]
Fixons une structure complexe $J\in\RR J(\surg)$. Prenons un point $x_i$, $1\leq i\leq k_-$, sur chaque composante connexe de $\RR \surg$ où $\RR N$ est non-orientable, et choisissons $l = \dd\frac{|\deg(N)-k_-|}{2}$ points $z_1,\ldots,z_l\in\surg\setminus\RR \surg$. Supposons tout d'abord que $\deg(N)-k_-$ est positif. En effectuant des tranformations élémentaires réelles négatives en $\underline{x} = (x_1,\ldots,x_{k_-},z_1,c_\sur(z_1),\ldots,z_l,c_\sur(z_l))$, on obtient un fibré en droites complexes muni d'une structure réelle $(N',c_{N'})$ de degré zéro et dont toutes les composantes réelles sont orientables. D'après le Lemme \ref{lemshev} nous avons un isomorphisme canonique
\[\label{sisi}
\Det = ((t_{-\underline{x}})^*\Det[N'])\otimes \det\left(\bigoplus_{i=1}^{k_-}\RR N_{x_i}\right) \otimes \det\left(\bigoplus_{i=1}^l\Delta_{z_i}\right).\tag{$*$}
\]
Fixons $f$ un des automorphismes considérés dans l'énoncé. D'après l'isomorphisme (\ref{sisi}) le signe de l'action de $f$ sur les orientations de $\Det$ est donné par le signe de son action sur les orientations de $\Det[N']$ si $f$ est positive sur un nombre pair de composantes non-orientables de $\RR N$ et par son opposé sinon. Enfin, en choisissant un isomorphisme entre $(N',c_{N'})$ et $(\TCC,conj)$ on voit que le signe de l'action de $f$ sur les orientations de $\Det[N']$ est le même que sur les orientations de $\Det[\TCC]$.

Vérifions pour finir les différents cas.
\begin{itemize}
\item Si $(\surg,c_\sur)$ est séparante, les automorphismes $f_k,g_k,\ldots,f_{k+m-1},g_{k+m-1}$ sont tous positifs sur $\RR \surg$ et d'après la Proposition \ref{castr} ils préservent les orientations de $\Det[\TCC]$. Ils préservent donc les orientations de $\Det$.
\item Si $(\surg,c_\sur)$ n'est pas séparante, un des automorphismes $f_j$, $j\in\{k,\ldots,g\}$ est positif sur toutes les composantes de $\RR\surg$, et d'après la Proposition \ref{castr} il renverse les orientations de $\Det[\TCC]$. Il renverse donc les orientations de $\Det$.
\item Un des automorphismes $f_j$, $j\in\{0,\ldots,k-1\}$ est positif sur toutes les composantes de $\RR\surg$ sauf sur la $j$-ième, et d'après la Proposition \ref{castr} il renverse les orientations de $\Det[\TCC]$. Il préserve donc les orientations de $\Det$ si et seulement si $(\RR N)_j$ n'est pas orientable.
\end{itemize}
Ce qui conclut la démonstration de la Proposition \ref{enonceaction}.
\end{proof}

\begin{proof}[Démonstration du Théorème \ref{enoncesn}]
Ce Théorème découle maintenant de la Proposition \ref{enonceaction} de l'Exemple \ref{sncalcul} et du Lemme \ref{lemmodsig}.
\end{proof}

\subsection{Fibré déterminant sur le groupe de Picard}\label{picard}

Nous donnons maintenant une application du Théorème \ref{enoncesn}.

Commençons par faire quelques remarques concernant le groupe de Picard réel (voir par exemple \cite{grossharris}). Soit $(\surg,c_\sur)$ une surface de Riemann réelle de genre $g\geq 1$. Nous supposerons que sa partie réelle est non vide.

Notons $\RR\mathcal{L}(\surg)$ l'ensemble des fibrés en droites holomorphes réels sur $(\surg,c_\sur)$. Nous avons sur $\RR\mathcal{L}(\surg)$ un fibré en droites réelles $\ddet$ de fibre $\Lambda^{\max}\left(H^0(\surg,L)_{+1}\right)\otimes\Lambda^{\max}\left(H^1(\surg,L)_{+1}\right)^*$ au-dessus de $(L,c_L)$. 

Considérons $\RR\Pic(\surg)$ le groupe de Picard réel de $(\surg,c_\sur)$. Nous avons une application naturelle $\RR\mathcal{L}(\surg) \rightarrow \RR\Pic(\surg)$ qui consiste à associer à un fibré en droites holomorphe réel sa classe d'isomorphisme. Toutefois, comme le montre la Remarque \ref{parite}, un lacet dans $\RR \Pic(\surg)$ ne correspond pas à une déformation à isomorphisme près de fibrés en droites holomorphes réels.

 Nous pouvons cependant associer à un tel lacet $([L]_t)_{t\in[0,1]}$ un élément de $F^-$, de la façon suivante. Choisissons un chemin d'opérateurs de Cauchy-Riemann réels $(\DB_t)_{t\in[0,1]}$ sur un fibré en droites complexe muni d'une structure réelle $(L,c_L)$ fixé se projetant dans $\RR\Pic(\surg)$ sur le lacet $([L]_t)_{t\in[0,1]}$. En prenant un automorphisme réel $f\in\raut{L}$ tel que $f^*\DB_1 = \DB_0$ nous obtenons, d'après le Lemme \ref{lemmodsig}, un élément $\ind_2(f)$ de $F^-$. D'autre part, si $g\in\raut{L}$ est un autre automorphisme vérifiant $g^*\DB_1 = \DB_0$, alors $g$ est homotope à plus ou moins l'automorphisme $f$ (car l'automorphisme $-1$ fixe les opérateurs de Cauchy-Riemann). Cette construction nous fournit un morphisme de \og monodromie \fg
\[
\mu : H_1(\RR\Pic(\surg),\ZZ/2\ZZ)\rightarrow F^-.
\]

D'autre part, nous introduisons comme Gross et Harris (voir \cite{grossharris}) le morphisme
\[
\begin{array}{c c c c}
\deg\times w_1 : & \RR\Pic(\surg) &\rightarrow & \ZZ\times\hczdeux{\RR\surg} \\
                 & (L,c_L)        &\mapsto     & (\deg(L), w_1(\RR L)).
\end{array}
\]
Pour $d\in\ZZ$ et $w\in \hczdeux{\RR\surg}$ tels que $w([\RR\surg]) = d\mod 2$, nous noterons $\RR\Pic_w^d(\surg) = (\deg\times w_1)^{-1}(d,w)$. Remarquons que 
\[
\RR\Pic(\surg) = \bigsqcup_{
  \begin{subarray}{l}
    (d,w)\in\ZZ\times\hczdeux{\RR\surg}\\w([\RR\surg]) = d\mod 2
  \end{subarray}
}\RR\Pic_w^d(\surg).
\]

Comme nous l'avons noté dans la Remarque \ref{snrem}, lorsque $d$ et $w([\RR\surg])$ sont congrus à $g-1$ modulo $2$, le morphisme $\AA^w$ est bien défini sur $F^-$.

Nous pouvons maintenant énoncer le résultat suivant.

\begin{Theoreme}\label{picp}
  Soit $(\surg,c_\sur)$ une surface de Riemann réelle de partie réelle
  non vide. Soit $d\in\ZZ$ et $w\in\hczdeux{\RR\surg}$ tels que $d = w([\RR\surg]) = g-1 \mod 2$. Le fibré $\ddet$ sur l'ensemble des fibrés en droites holomorphes réels de degré $d$ et de partie réelle de première classe de Stiefel-Whitney $w$ descend sur $\RR\Pic_w^d(\surg)$ en un fibré noté $\ddet_w^d$. De plus, la première classe de Stiefel-Whitney de ce fibré est donnée par:
\[
w_1(\ddet_w^d) = \AA^w\circ\mu.
\]
\end{Theoreme}

Comme nous l'avons déjà vu au \S \ref{enonce}, le fibré $\ddet$ ne descend pas en un fibré sur $\RR\Pic(\surg)$ tout entier. En effet, si $(L,c_L)$ est un fibré en droites holomorphe réel, $-1$ en est un automorphisme holomorphe. Or comme nous l'avons déjà remarqué, $-1$ préserve les orientations de $\Det[L]$ si et seulement si $\deg(L) + 1 - g$ est pair. Ainsi, le fibré $\ddet$ ne descend que sur les composantes de $\RR\Pic(\surg)$ formées des fibrés dont le degré est de même parité que $g-1$, et le Théorème \ref{picp} donne sa première classe de Stiefel-Whitney.
 
Pour étudier l'autre cas, nous fixons un point $p\in \RR\surg$ et considérons le fibré $\ddet_p$ sur $\RR\mathcal{L}(\surg)$ de fibre $\Lambda^{\max}\left(H^0(\surg,L)_{+1}\right)\otimes\Lambda^{\max}\left(H^1(\surg,L)_{+1}\right)^*\otimes \RR L_p$ au-dessus de $(L,c_L)$.

Pour $w\in\hczdeux{\RR\surg}$, notons $\tilde{w}_p\in\hczdeux[g-1]{\RR\surg}$ l'unique classe égale à $w$ partout sauf peut-être sur la composante de $\RR\surg$ contenant $p$. Nous pouvons maintenant énoncer la suite du Théorème \ref{picp}.

\begin{Theoreme}\label{pic}
  Soit $(\surg,c_\sur)$ une surface de Riemann réelle de partie réelle
  non vide et soit $p$ un point réel de $\surg$. Soit $d\in\ZZ$ et $w\in\hczdeux{\RR\surg}$ tels que $d = w([\RR\surg]) = g \mod 2$. Le fibré $\ddet_p$ sur l'ensemble des fibrés en droites holomorphes réels de degré $d$ et de partie réelle de première classe de Stiefel-Whitney $w$ descend sur $\RR\Pic_w^d(\surg)$ en un fibré noté $\ddet_{p,w}^d$. De plus, la première classe de Stiefel-Whitney de ce fibré est donnée par:
\[
w_1(\ddet_{p,w}^d) = \AA^{\tilde{w}_p}\circ\mu.
\]
\end{Theoreme}

Avant d'entamer la démonstration notons d'après la Remarque \ref{remprod} qu'en déplaçant le point $p$ sur une autre composante connexe de $\RR \surg$, on ajoute $c\circ\mu$ à la classe du Théorème \ref{pic}, où $c$ est une courbe simple globalement invariante par $c_\sur$ et reliant les deux composantes connexes de $\RR\surg$ en question, et pour tout $a\in H_1(\RR \Pic(\surg),\ZZ/2\ZZ)$, $c\circ\mu(a) = (\mu(a))(c)$. 

Soulignons d'autre part que si le fibré $\ddet_p$ est défini comme un produit tensoriel de $\ddet$ avec $\RR L_p$ sur $\RR\mathcal{L}(\surg)$, ce n'est plus le cas pour $\ddet_{w,p}^d$ sur $\RR\Pic_w^d(\surg)$. 

\begin{proof}[Démonstration des Théorèmes \ref{picp} et \ref{pic}]
Prenons une base symplectique réelle de $\hhz{\surg}$ avec $p$ comme point base. Choisissons un fibré vectoriel complexe $(N,c_N)$ de rang un sur $(\surg,c_\sur)$. Posons $d = \deg(N)$ et $w=w_1(\RR N)$. Nous savons qu'à chaque opérateur de Cauchy-Riemann sur $N$ correspond une unique structure holomorphe et réciproquement (voir \cite{koba}). Nous avons donc un isomorphisme
\[
\RR\Pic_w^d(\surg) \cong \ropj/\raut{N}
\]
 De plus, le tiré en arrière du fibré $\Det$ par cet isomorphisme est le fibré $\ddet$. D'autre part, le lacet dans $\ropj/\raut{N}$ engendré par un automorphisme $f\in\raut{N}$ associé à une courbe $a\in\hhz{\surg}_{+1}$ est envoyé sur un lacet de monodromie $a^{\pd}\in F^-$. Les Théorèmes \ref{enoncea} et \ref{enoncesn} nous permettent alors de conclure.
\end{proof}

\begin{Exple} 
  Prenons par exemple le cas du genre $g = 1$. Chaque composante $\RR \Pic^d_w(\sur_1)$ est alors un cercle auquel est associée une monodromie $\pm f$, où $f$ est une des deux fonctions de la famille $\mathcal{B}$ construite au \S \ref{modsig}. Comme nous supposons que la partie réelle de $\sur_1$ est non vide, celle-ci a une ou deux composantes.

 Lorsqu'elle en a deux, la courbe est séparante. Si $d$ est pair, alors $w$ a la même valeur sur chaque composante de $\RR\sur_1$. Si nous appliquons alors le Théorème \ref{picp} en utilisant le Théorème \ref{enoncea} et l'Exemple \ref{sncalcul}, nous avons
\[
w_1(\ddet_w^d)([\RR\Pic_w^d(\sur_1)]) = \AA^w(f) = 1-w([\RR\sur_1]_0)
\]
et nous voyons que le fibré déterminant est orientable exactement sur les composantes du groupe de Picard où $w\neq 0$. Si $d$ est impair et $p$ un point sur la composante où $w$ est non nulle, alors $\tilde{w}_p = 0$ et de même que ci-dessus, le fibré $\ddet_{p,w}^d$ n'est pas orientable. Si $p$ est sur la composante où $w$ est nulle, alors $\ddet_{p,w}^d$ est orientable.

Lorsque $\RR\sur_1$ n'a qu'une seule composante, la courbe n'est pas séparante. En raisonnant comme précédemment, lorsque $d$ est pair, et le fibré déterminant n'est orientable sur aucune des composantes du groupe de Picard. Lorsque $d$ est impair, $\tilde{w}_p$ est nulle et $\ddet_{p,w}^d$ n'est pas orientable.
\end{Exple}

\begin{Rem}
  Nous avons en fait calculé la première classe de Stiefel-Whitney du déterminant de la cohomologie de la partie réelle des fibrés universels de Poincaré décris par Biswas et Hurtubise dans \cite{bishurt} lorsque la courbe $(\surg,c_\sur)$ est de partie réelle non vide.
\end{Rem}

\bibliographystyle{plain-fr}
\addcontentsline{toc}{part}{\hspace*{6mm}Références}

Université de Lyon; CNRS; Université Lyon 1; Institut Camille Jordan.

\end{document}